\newtheorem{theorem}{Theorem}[section]
\newtheorem{lemma}[theorem]{Lemma}
\newtheorem{corollary}[theorem]{Corollary}
\newtheorem{proposition}[theorem]{Proposition}
\theoremstyle{definition}
\newtheorem{definition}[theorem]{Definition}
\newtheorem{notation}[theorem]{Notation}
\theoremstyle{remark}
\newtheorem{remark}[theorem]{Remark}
\numberwithin{equation}{section}
\newcommand{\sai} {\mbox{$\to \kern -0.50 em \to$}}
\newcommand{\nsai} {\mbox{$\not\to \kern -0.50 em \to$}}
\begin{document}
 
\title[SOME SIMPLIFICATIONS IN BASIC COMPLEX ANALYSIS] 
{SOME SIMPLIFICATIONS IN BASIC COMPLEX ANALYSIS }

\author{Oswaldo Rio Branco de Oliveira}
 
\begin{abstract} {This paper presents very simple and easy integration-free proofs in the context of Weierstrass's theory of functions, of the Maximum and Minimum Modulus Principles and Gutzmer-Parseval Inequalities for polynomials and for functions developable in complex power series at every point in their domains, as well as a trivial proof of the Open Mapping Theorem, an intuitive version of Liouville's Theorem, an easy proof of Weierstrass's Theorem on Double Series, a modest extension of Schwarz's Lemma, and some other related results. It also presents easy proofs of the P\'{o}lya-Szeg\"{o} and P. Erd\"{o}s' Anti-Calculus Proposition, a theorem on saddle points by Bak-Ding-Newman, and the well-known Clunie-Jack Lemma. } 
\end{abstract}
 
\maketitle

\par

\tableofcontents

\section{Introduction}  
The aim of this work is, by employing a ``method'' used in the elementary proof of the Fundamental Theorem of Algebra by de Oliveira ~\cite{OO1} (see also ~\cite{OO2}) and an averaging technique, to give simple, easy, and independent proofs for polynomial versions of a result here named the Gutzmer-Parseval Inequality (by combining the attributions in Burckel~\cite[p.~81]{BU} and Remmert ~\cite[p.~243]{RR1}), the Maximum Modulus Principle, and the Minimum Modulus Principle (also known as Cauchy's Minimum Principle, see Remmert ~\cite[p.~112]{RR2}). This work also aims, through the use of very basic concepts in plane topology, basic results on complex power series, and the two tools already mentioned, to provide extensions of those polynomial results to all power series, in the context of Weierstrass's theory of functions. In addition, this article provides proofs of the Open Mapping Theorem, an Inverse Function Theorem,  Liouville's Theorem (and an extension of it), a theorem by P\'{o}lya-Szeg\"{o} and Erd\"{o}s, a quite recent theorem on saddle points by Bak-Ding-Newman, and the Clunie-Jack Lemma. Some consequences of a Polygonal Mean-Value Property for Polynomials are also proved.

Moreover, this paper proves the easy part of a simplification given by Whyburn of a theorem on power series (independently) demonstrated by Hurwitz, Connell and Porcelli, and Read. Then, through employing the Gutzmer-Parseval Inequality, this article  furnishes a modest extension of Schwarz's Lemma and a rather easy proof for Weierstrass's theorem on double series (see Remmert ~\cite[pp.~250--251]{RR1} and Knopp ~\cite[pp.~430--433]{KK}), a result considered by Weierstrass as the key to convergence theory (see Remmert~\cite[pp.~250--251]{RR1}). In addition, still using the Gutzmer-Parseval inequality, another convergence theorem and Montel's Theorem are proved. Lastly, two results on Laurent series are also proved.

It is interesting to notice that this work provides proofs that do not employ function continuity for the following results: the Gutzmer-Parseval Inequality, Cauchy's Inequalities, Maximum Modulus Principle, Liouville's Theorem, and the Uniqueness Theorem for the coefficients of a power series. 

It is remarked in Conway ~\cite[p.~80]{CON} that ``the Maximum Modulus Theorem ... is far from obvious even for polynomials.'' In Lang ~\cite[p.~84]{LA}, the Maximum Modulus Principle is shown to be a consequence of the Open Mapping Theorem, for which an elaborate proof is given, by applying the theorem on existence of a local compositional inverse $g(w)=\sum b_n(w-w_0)^n$, where $b_n\in \mathbb C$ and $n \in \mathbb N$, for a power series $f(z)=\sum a_n(z-z_0)^n$, where $a_n \in \mathbb C$ and $n\in \mathbb N$, if $f'(z_0)\neq 0$; that is, we have $(g\circ f)(z)=z$ for all $z$ in a neighborhood of $z_0$. In this presentation we will not use this existence theorem. Moreover, Beardon ~\cite[p.~103]{BE} proves the Maximum Modulus Principle for Polynomials by using the Argument Principle.

We recall that a function $f:\Omega \to \mathbb C$, with $\Omega$ an open
subset of $\mathbb C$, is \textit{complex-differentiable}, or \textit{holomorphic}, if $f$ has
complex derivatives $f'(z)=\lim\limits_{h\to 0}\frac{f(z+h) -f(z)}{h}$ at
every point $z\in \Omega$. In \cite{RR1} Remmert pointed out that the goal of Karl Weierstrass was to establish the study of holomorphic functions solely on the basis of power series, without the use of integrals; and although such a methodologically pure path has now been abandoned, modern authors such as Burckel \cite{BU}, Lang \cite{LA}, Bak and Newman \cite{BN}, Remmert \cite{RR1}, and others still stress the importance of the study of power series. A translation of Carath\'{e}odory's opinion is presented by Remmert ~\cite[p.~109]{RR1} as ``Power series are therefore especially convenient because one can compute with them almost
as with polynomials.''

\section{Preliminaries}
Let us denote by $\mathbb N=\{0,1,2,...\}$ the set of all natural numbers, $\mathbb Z$ the set of all integer numbers, $\mathbb Q$ the field of rational numbers, $\mathbb R$ the complete field of real numbers, and $\mathbb C$ the algebraically closed field of complex numbers. Moreover, if $z\in\mathbb C$ then we write $z=x+iy$, where $x=$ Re$(z)\in \mathbb R$ is the real part of $z$, $y=$ Im$(z)\in \mathbb R$ is the imaginary part of $z$, and $i^2=-1$. Given $z=x+iy$ in $\mathbb C$, its conjugate is the complex number $\overline{z} = x-iy$ and its absolute value is the non-negative real number $|z|=\sqrt{z\overline{z}}=\sqrt{x^2 + y^2}$.

The open disk centered at the point $z_0\in \mathbb C$ with radius $r>0$ is the set $D(z_0;r)=\{z \in \mathbb C: |z-z_0|<r\}$. Similarly, the compact disk centered at $z_0$ with radius $r\geq 0$ is the set
$\overline{D}(z_0;r)=\{z \in \mathbb C: |z-z_0|\leq r\}$.

Given $X\subset \mathbb C$, a point $p \in \mathbb C$ is an \textit{ accumulation point} of $X$ if every disk $D(p;r)$, where $r>0$, contains a point of $X$ distinct of $p$.

In this text we will use the following well-known results on power series (see de Oliveira ~\cite{OO3}; see also ~\cite{BN}, ~\cite{BU}, ~\cite{KK}, and ~\cite{LA}): 
\begin{itemize}
\item[$\circ$] Let $(a_n)$ be a sequence of complex numbers. Applying the {\sf Cauchy-Hadamard Formula}, $\rho^{-1} =\limsup\sqrt[n]{|a_n|}$, it follows that if $\rho>0 $, then the power series $f(z)=\sum a_n(z-z_0)^n$ converges uniformly and absolutely on any compact disk $\overline{D}(z_0;r)\subset D(z_0;\rho)$ and diverges at every point $z$ such that $|z-z_0|>\rho$. We call $\rho$ and $D(z_0;\rho)$, the \textit{ radius of convergence} and the \textit{disk of convergence} of the power series, respectively. The function $f$ is continuous on $D(z_0;\rho)$. 

\item[$\circ$] If $f(z) =\sum a_n(z-z_0)^n$ and $g(z)=\sum b_n(z-z_0)^n$ are convergent power series with radii of convergence $\rho_1>0$ and $\rho_2>0$, respectively, and $\lambda\in \mathbb C$, then $\lambda f(z)=\sum \lambda a_n(z-z_0)^n$ is a convergent power series with radius of convergence either equal to $\rho_1$, if $\lambda \neq 0$, or equal to $+\infty$, if $\lambda =0$. Moreover, $f(z)+g(z) =\sum(a_n+b_n)(z-z_0)^n$ and $f(z)g(z)=\sum\limits_{n=0}^{+\infty}\Big(\sum\limits_{j+k=n}
a_jb_k\Big)(z-z_0)^n$ are convergent power series with radius of convergence
$\rho_3\geq \min\{\rho_1,\rho_2\}$.  

\item[$\circ$] Given $f(z)=\sum a_n(z-z_0)^n$ in the disk of convergence $D(z_0;\rho)$, with $\rho>0$, then there exists $f'(z)=\sum na_n(z-z_0)^{n-1}$ for all $z \in D(z_0;\rho)$. Thus, $f$ is infinitely differentiable in $D(z_0;\rho)$ and we have $a_n=\frac{ f^{(n)}(z_0)}{n!}$, for all $n \in \mathbb N$. We say that $f(z)=\sum  f^{(n)}(z_0)(z-z_0)^n/n!$ is the \textit{Taylor series} of $f$ around (or centered at) $z_0$. If $w\in D(z_0;r)$, then the Taylor series of $f$ around $w$ converges to $f$ in the open disk $D(w;r-|w-z_0|)$. 

\item[$\circ$] If $f(z)=\sum a_nz^n$ and $g(z)=\sum b_nz^n$ are both
convergent in $D(0;r)$, with $r>0$ and $g(0)\in D(0;r)$, then the function composition $(f\circ g)(z)=f\big(g(z)\big)$ is a convergent power series in some disk $D(0;\delta)$, where $\delta >0$.  

\item[$\circ$] If $f(z)=\sum a_nz^n$ is a convergent power series in
$D(0;r)$, where $r>0$ and $a_0=f(0)\neq 0$, then the function $1/f(z)$ is a convergent power series in some disk $D(0;\delta)$, with $\delta >0$.  

\item[$\circ$] {\sf Principle of Isolated Zeros for Power Series.} If
$f(z)=\sum a_nz^n$ is a power series convergent inside $D(0;r)$,  where $r>0$, such that $f(0)=0$ but $f$ is not the null function, then there exists a smallest $k\geq 1$ satisfying $a_k\neq 0$ and a power series $g(z)=\sum b_nz^n$ convergent in $D(0;\delta)$, for some $\delta >0$, so that we have the factorization $f(z) =z^kg(z)$, for all $z \in D(0;\delta)$, with $g$ nowhere vanishing.  

\item[$\circ$]{\sf Identity Principle for Power Series.} If $f(z)=\sum
a_nz^n$ and $g(z)=\sum b_nz^n$  are convergent power series in $D(0;r)$
satisfying $f(z)=g(z)$ for all $z$ in a subset $X$ of $D(0;r)$, where $X$ has an accumulation point in $D(0;r)$, then we have $a_n=b_n$ for all $n\in \mathbb N$.  

\item[$\circ$] The complex series
$\exp(z)=e^z=\sum_{n=0}^{+\infty}\frac{z^n}{n!}$, $\sin z=
\sum_{n=0}^{+\infty} \frac{(-1)^nz^{2n+1}}{(2n+1)!}$, and $\cos
z=\sum_{n=0}^{+\infty}\frac{(-1)^nz^{2n}}{(2n)!}$ converge in $\mathbb C$. Moreover, we have {\sf Euler's Formula}: $e^{i\theta}=\cos \theta + i\sin\theta$, for all $\theta \in \mathbb R$. 

\item[$\circ$] If $\alpha$ is a real number we define, for each $n\in \mathbb N$, the binomial coefficients $\binom{\alpha}{n}= \frac{\alpha(\alpha -1)\ldots (\alpha -n +1)}{n!}$, if $n\geq 1$, and $\binom{\alpha}{0}=1$. Then, we have the real binomial series $(1+x)^\alpha =\sum_{n=0}^{+\infty}\binom{\alpha}{n}x^n,$ with radius of convergence $\rho=1$. 

\end{itemize}
Right below we prove a result about the \textit{complex binomial series} that we shall need.

\begin{proposition}\label{PROP21} Let $p\in \mathbb N\setminus\{0\}$. Then $B(z)= \sum_{n=0}^{+\infty}\binom{1/p}{n}z^n$ converges in the open disk $D(0;1)$ and $B(z)$ is a pth root of $1+z$, where $|z|<1$. That is, we have \[B(z)^p=1+z,\ \textrm{for all}\  z \in D(0;1).\] 
\end{proposition}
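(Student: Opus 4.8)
The plan is to separate the two assertions: first the convergence of the complex series $B(z)$ on $D(0;1)$, and then the algebraic identity $B(z)^p = 1+z$. For the convergence, I would simply note that the coefficients $\binom{1/p}{n}$ are real and coincide with those of the real binomial series for the exponent $\alpha = 1/p$, which the Preliminaries record as having radius of convergence $\rho = 1$. Since the Cauchy--Hadamard formula $\rho^{-1} = \limsup \sqrt[n]{|a_n|}$ depends only on the absolute values of the coefficients, the complex power series $B(z) = \sum \binom{1/p}{n} z^n$ has the same radius of convergence $1$, hence converges (absolutely, and uniformly on compact subdisks) on $D(0;1)$.

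For the identity, the strategy is to first establish it on the real interval $(-1,1)$ and then upgrade it to all of $D(0;1)$ by the Identity Principle for Power Series. On the real side, the real binomial series gives $B(x) = \sum \binom{1/p}{n} x^n = (1+x)^{1/p}$ for every $x \in (-1,1)$, where $(1+x)^{1/p}$ denotes the positive real $p$-th root of the positive number $1+x$; raising both sides to the $p$-th power yields $B(x)^p = 1+x$ for all $x \in (-1,1)$.

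Next I would observe that $B(z)^p$, being a finite product of the convergent power series $B(z)$ with itself, is again a convergent power series on $D(0;1)$ (by the product rule for power series from the Preliminaries, its radius of convergence is at least $1$), while the polynomial $1+z$ is trivially a convergent power series. These two power series agree on the set $X = (-1,1) \subset D(0;1)$, and $X$ has an accumulation point --- for instance $0$ --- lying in $D(0;1)$. The Identity Principle for Power Series therefore forces all their coefficients to coincide, so $B(z)^p = 1+z$ for every $z \in D(0;1)$, as desired.

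The only step requiring genuine care is the real identity $\sum \binom{1/p}{n} x^n = (1+x)^{1/p}$, but this is precisely the classical real binomial theorem, already listed among the background facts, so no new argument is needed. Alternatively one could bypass the real case entirely: the recursion $(n+1)\binom{1/p}{n+1} = (\frac{1}{p} - n)\binom{1/p}{n}$ shows that $B$ satisfies $(1+z)B'(z) = \frac{1}{p} B(z)$ on $D(0;1)$, whence the holomorphic function $h(z) = B(z)^p/(1+z)$ has $h' \equiv 0$ on the connected set $D(0;1)$ and $h(0) = B(0)^p = 1$, giving $h \equiv 1$ and $B(z)^p = 1+z$. I would nonetheless keep the Identity-Principle argument as the main line, since it is the shortest route given the tools already assembled in the excerpt.
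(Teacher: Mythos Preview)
Your argument is correct and is essentially the same as the paper's own proof: both deduce the radius of convergence from the real binomial series via Cauchy--Hadamard, verify $B(x)^p=1+x$ on $(-1,1)$ from the real identity, and then invoke the Identity Principle to extend the equality to all of $D(0;1)$. Your alternative ODE argument is a nice bonus, but the main line matches the paper exactly.
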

\begin{proof} Given $x\in (-1,1)$, it is well-known that
$b(x)=(1+x)^\frac{1}{p}=\sum_{n=0}^{+\infty}\binom{1/p}{n}x^n$. It is also known that the real series $\sum_{n=0}^{+\infty}\binom{1/p}{n}x^n$ diverges if $|x|>1$. Therefore, from the Cauchy-Hadamard formula it follows that the function $B(z)=\sum_{n=0}^{+\infty}\binom{1/p}{n}z^n$, where $z \in \mathbb C$, has radius of convergence $\rho=1$. By a property of the product of convergent power series, the function $B(z)^p$ is a power series convergent in $D(0;1)$ that satisfies the equation $B(x)^p=b(x)^p=1+x$, for all $x\in (-1,1)$. The claim then follows from the identity principle. 
\end{proof}

Henceforth, $\Omega$ denotes an open subset of $\mathbb C$. 

We say that $\Omega$ is \textit{connected} if the only subsets $X$ of $\Omega$ such that $X$ and $\Omega \setminus X$ are both open in $\mathbb C$, are the subsets $X=\Omega$ and $X=\emptyset$.
 
Given $z_1$ and $z_2$, both in $\mathbb C$, we denote the line segment joining them by $[z_1,z_2]$.  A polygonal line is a finite union of line segments of the form $[z_0,z_1]\cup [z_1,z_2]\cup\ldots \cup [z_{n-1},z_n]$.
We say that $\Omega$ is \textit{polygonally connected} if each pair of points in $\Omega$ can be joined through line segments lying in $\Omega$. It is not difficult to verify that $\Omega$ is connected if and only if $\Omega$ is polygonally connected.

\begin{definition}\label{DEF22} A function $f:\Omega \to \mathbb C$ is called \textit{analytic} in $\Omega$ if for each $z_0\in \Omega$ there exists a radius $r=r(z_0)>0$ and constants $c_n \in \mathbb C$ such that $D(z_0;r) \subset \Omega$ and $f(z)= \sum_{n=0}^{+\infty}c_n(z-z_0)^n$ for all $z \in D(z_0;r)$. We indicate by $\mathcal{A}(\Omega)$ the set of  analytic functions in $\Omega$. 
\end{definition}

We will use the following well-known results on analytic functions, all of them easy consequences of the previously listed basic results on power series: 
\begin{itemize}
\item[$\circ$] Every power series convergent in
$D(0;\rho)$, where $\rho>0$, is analytic in $D(0;\rho)$. 
\item[$\circ$] Every analytic function is continuous and infinitely differentiable.
\item[$\circ$] If $f$ and $g$ are in $\mathcal{A}(\Omega)$ and $\lambda \in \mathbb C$, then $f+g$, $\lambda f$, and $fg$ are also in
$\mathcal{A}(\Omega)$. The function $1/f$ defined on the open set $\{z\in \Omega: f(z)\neq 0\}$ is also analytic.  
\item[$\circ$] If $g\in \mathcal{A}(\Omega_1)$ and $f\in
\mathcal{A}(\Omega_2)$ and the image of $g$, the set $g(\Omega_1)$, is
contained in $\Omega_2$, then the function composition $(f\circ
g)(z)=f\big(g(z)\big)$ is analytic in $\Omega_1$. 
\end{itemize}

Next, we prove a fundamental result about analytic functions.

\begin{proposition}\label{PROP23} ({\sf Principle of Isolated Zeros for
$\mathcal{A}(\Omega)$}) Let $f$ be in $\mathcal{A}(\Omega)$, with $f$ not being the null function, and $\Omega$ an open connected set in the complex plane. Then, $\mathcal{Z}(f)=\{z \in \Omega:
f(z)=0\}$ is an isolated subset of $\Omega$. Moreover, if $z_0\in \mathcal{Z}(f)$, then there exists a smallest $k\geq 1$ and a function $\varphi\in \mathcal{A}(\Omega)$ such that we have the factorization
\[f(z) = (z-z_0)^k\varphi(z), \ \textrm{for all}\ z \in \Omega,\ 
\textrm{where}\ \varphi(z_0)\neq 0 .\]  
\end{proposition}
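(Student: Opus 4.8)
The plan is to deduce everything from the two power-series facts already recorded in the preliminaries, the \textsf{Principle of Isolated Zeros for Power Series} and the \textsf{Identity Principle for Power Series}, once they are read with center $z_0$ instead of center $0$ (this is harmless: if $f$ is analytic near $z_0$ then $w\mapsto f(z_0+w)$ is a convergent power series in $w$). First I would introduce the set $A=\{z\in\Omega: f\equiv 0 \text{ on some open disk } D(z;r)\subset\Omega\}$. By its very definition $A$ is open. To see that $A$ is closed in $\Omega$, take $z_0$ in the closure of $A$ within $\Omega$, write $f(z)=\sum c_n(z-z_0)^n$ on a disk $D(z_0;r)\subset\Omega$, and pick $w\in A\cap D(z_0;r)$; then $f$ vanishes on a whole open subset $X$ of $D(z_0;r)$ (a neighborhood of $w$), and $X$ has an accumulation point in $D(z_0;r)$, so comparing the Taylor series of $f$ at $z_0$ with the zero series via the Identity Principle forces $c_n=0$ for all $n$, hence $f\equiv 0$ on $D(z_0;r)$ and $z_0\in A$.

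Since $\Omega$ is connected and $A$ is both open and closed in $\Omega$, either $A=\emptyset$ or $A=\Omega$; the latter would make $f$ the null function, which is excluded, so $A=\emptyset$. Now fix $z_0\in\mathcal{Z}(f)$ and choose $D(z_0;r)\subset\Omega$ with $f(z)=\sum c_n(z-z_0)^n$ there. Because $A=\emptyset$, $f$ is not identically zero on this disk, while $c_0=f(z_0)=0$; the (translated) Principle of Isolated Zeros for Power Series then gives a smallest $k\geq 1$ with $c_k\neq 0$ together with a nowhere-vanishing convergent power series $h(z)=\sum b_n(z-z_0)^n$ on some $D(z_0;\delta)$ such that $f(z)=(z-z_0)^k h(z)$ on $D(z_0;\delta)$, and matching coefficients gives $b_0=c_k\neq 0$. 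In particular $f$ has no zero in $D(z_0;\delta)\setminus\{z_0\}$, which is precisely the statement that $\mathcal{Z}(f)$ is an isolated subset of $\Omega$.

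It remains to globalize the factorization. I would define $\varphi:\Omega\to\mathbb{C}$ by $\varphi(z)=f(z)/(z-z_0)^k$ for $z\neq z_0$ and $\varphi(z_0)=c_k$. On $\Omega\setminus\{z_0\}$ the map $\varphi$ is a quotient of analytic functions with nowhere-vanishing denominator, hence analytic by the closure properties of $\mathcal{A}(\Omega)$; on $D(z_0;\delta)$ one checks that $\varphi$ coincides with the convergent power series $h$ (for $z\neq z_0$ by the local factorization, and at $z_0$ because $h(z_0)=b_0=c_k$), so $\varphi$ is analytic there as well. Therefore $\varphi\in\mathcal{A}(\Omega)$, $\varphi(z_0)=c_k\neq 0$, and $f(z)=(z-z_0)^k\varphi(z)$ for every $z\in\Omega$.

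I expect the only real friction to be bookkeeping rather than conceptual: making the passage from center $0$ to center $z_0$ in the quoted power-series results explicit, and verifying carefully that the two descriptions of $\varphi$ near $z_0$ (the pointwise quotient and the local power series $h$) agree, so that $\varphi$ is genuinely analytic across the point $z_0$. The connectedness step is essential but routine once $A$ has been shown to be open and closed.
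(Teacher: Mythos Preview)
Your proof is correct and follows essentially the same route as the paper: introduce the set of points where $f$ vanishes on a neighborhood, show it is open and closed in $\Omega$, conclude it is empty by connectedness, then invoke the power-series Principle of Isolated Zeros at $z_0$ and glue the local factor with $f(z)/(z-z_0)^k$ to obtain the global $\varphi$. The only cosmetic difference is in the closedness step: you appeal to the Identity Principle for Power Series on $D(z_0;r)$, whereas the paper argues that $f^{(j)}(w_n)=0$ for all $j$ and passes to the limit by continuity of the derivatives to kill every Taylor coefficient at the limit point.
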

\begin{proof} First, let us show that 
\[W=\{w\in \Omega: f\ \textrm{is identically zero over some open disk centered at}\ w\}\] 
is an empty set. Clearly, $W$ is open. Moreover, let us suppose that $(w_n)$ is a sequence in $W$ such that $w_n\to \zeta$, where $\zeta\in \Omega$, as $n\to +\infty$. By the hypothesis on $(w_n)$ we have $f^{(j)}(w_n)=0$, for all $j\in \mathbb N$ and all $n\in \mathbb N$. Since $f$ and its derivatives are continuous, we obtain $f^{(j)}(\zeta)=0$, for all $j\in \mathbb N$. Developing $f$ by its Taylor series centered at $\zeta$ we deduce that $\zeta\in W$. Therefore, $\Omega \setminus W$ is also open. Thus, since $\Omega$ is connected and $f$ is not the null function, $W$ is empty.

By the previous paragraph, given $z_0 \in \mathcal{Z}(f)$, the Taylor series of $f$ centered at $z_0$ does not vanish identically. Employing the principle of isolated zeros for power series, we find $k\in \{1,2,3,\ldots\}$ and a small $r>0$ such that  
\begin{displaymath}
\left\{\begin{array}{ll}
f(z)=(z-z_0)^kg(z),  &\textrm{for all}\ z \in D(z_0;r),\\
g(z) =\sum\limits_{p= k}^{+\infty}a_p(z-z_0)^{p-k}, &\textrm{with}\ 
a_p=\frac{f^{(p)}(z_0)}{p\,!}, \ \textrm{for all} \  p\geq k, \ \textrm{and}\ g(z_0)=a_k\neq 0. 
\end{array}
\right.
\end{displaymath}
We complete the proof by defining
\begin{displaymath}
\varphi(z)  = \left\{\begin{array}{ll}
g(z),  &\textrm{if}\ z \in D(z_0;r)\\
\huge{\frac{f(z)}{(z-z_0)^k}}, &\textrm{if}\ z \in \Omega \setminus \{z_0\}.
\end{array}
\right.
\end{displaymath}
\end{proof}

\

\section{The Gutzmer-Parseval Inequality for Polynomials and for Analytic Functions, Cauchy's Inequalities, and Liouville's Theorem}

In this section we prove The Gutzmer-Parseval Inequality for Polynomials and, as a consequence, The Gutzmer-Parseval Inequality for Analytic Functions, Cauchy's Inequalities, the Maximum Modulus Principle, the Uniqueness Theorem for the Coefficients of a Power Series, and two Liouville's theorems.

In 1832, A. L. Cauchy already knew the inequalities bearing his
name. In 1888, A. Gutzmer published the formula 
\[\sum |a_n|^2r^{2n}=\frac{1}{2\pi}\int_0^{2\pi}|f(z_0
+re^{i\theta})|^2\,d\theta\,,\]
 for functions complex differentiable in an open set (holomorphic functions).  
 
Searching for an integration-free theory of holomorphic functions, P. Porcelli and L. M. Weiner ~\cite{PW}, in 1957, published the following Cauchy inequality for polynomials: `` If a polynomial  $P(z)=a_0+a_1z+ \cdots +a_nz^n$ satisfies $|P(z)|\leq M$, for all $|z|\leq R$, then we have $|a_j|\leq M/R^{j}$, for $j=0, \ldots,n$.'' This Cauchy inequality was applied in ~\cite{COPO1}. Another proof of this inequality, with an application, was given in ~\cite{COPO2}. See also Leland ~\cite{LE}. 

The reader is invited to look Weierstrass's nice proof of Cauchy's inequality for analytic functions (given in 1841) that is offered in Remmert ~\cite[p. 247]{RR1}. I had the luck of receiving this same invitation from R. B. Burckel and Paulo A. Martin.

\begin{lemma}\label{LEM31}({\sf The Gutzmer-Parseval Inequality for Polynomials}) Let $P(z) =\sum_{j=0}^na_jz^j$, with $n\geq 1$, be a polynomial and $r>0$. Let us define
\[ m(r)=\min\limits_{|z|=r}|P(z)| \ \textrm{and}\  M(r)=\max\limits_{|z|=r}|P(z)|.\]
Then, we have 
\[ m(r)^2 \leq \sum_{j=0}^n|a_j|^{\,2}\,|r|^{2j} \leq M(r)^2.\]
\end{lemma}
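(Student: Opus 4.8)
The plan is to replace the classical integration of $\abs{P(re^{i\theta})}^{2}$ over $\theta\in[0,2\pi]$ by a finite average over roots of unity, so that the proof stays integration-free and, in fact, uses nothing beyond elementary arithmetic with $N$th roots of unity (this is the averaging device already alluded to in the Introduction). First I would fix $r>0$, pick an integer $N\geq n+1$, set $\omega=e^{2\pi i/N}$, and record that $1,\omega,\dots,\omega^{N-1}$ are the $N$ distinct $N$th roots of unity, all of modulus $1$. The one computational ingredient to establish at the outset is the orthogonality relation: for $m\in\mathbb Z$,
\[
\frac{1}{N}\sum_{l=0}^{N-1}\omega^{lm}=1\ \text{ if }\ N\mid m,\qquad \frac{1}{N}\sum_{l=0}^{N-1}\omega^{lm}=0\ \text{ if }\ N\nmid m,
\]
which is immediate from the geometric sum $\sum_{l=0}^{N-1}\zeta^{l}=\dfrac{\zeta^{N}-1}{\zeta-1}$ applied to $\zeta=\omega^{m}\neq 1$ (the case $\zeta=1$ being trivial).

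Next I would expand, for each $l\in\{0,\dots,N-1\}$,
\[
\abs{P(r\omega^{l})}^{2}=P(r\omega^{l})\,\overline{P(r\omega^{l})}=\sum_{j=0}^{n}\sum_{k=0}^{n}a_{j}\overline{a_{k}}\,r^{j+k}\,\omega^{l(j-k)},
\]
sum over $l=0,\dots,N-1$, divide by $N$, and interchange the (finite) order of summation. By the orthogonality relation the term indexed by $(j,k)$ survives only when $N\mid(j-k)$; but $0\leq j,k\leq n$ gives $\abs{j-k}\leq n<N$, so the only surviving terms are those with $j=k$. This produces the exact Parseval-type identity
\[
\frac{1}{N}\sum_{l=0}^{N-1}\abs{P(r\omega^{l})}^{2}=\sum_{j=0}^{n}\abs{a_{j}}^{2}r^{2j},
\]
which is the heart of the matter.

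Finally, since $\abs{r\omega^{l}}=r$ for every $l$, the definitions of $m(r)$ and $M(r)$ give $m(r)^{2}\leq\abs{P(r\omega^{l})}^{2}\leq M(r)^{2}$ at each of these $N$ points; averaging these $N$ inequalities and substituting the identity above yields $m(r)^{2}\leq\sum_{j=0}^{n}\abs{a_{j}}^{2}r^{2j}\leq M(r)^{2}$, and one observes $\abs{r}^{2j}=r^{2j}$ since $r>0$. I do not expect a serious obstacle: the argument is genuinely elementary. The only delicate point is the choice of $N$, which must exceed $n$ so that distinct exponents $j,k\in\{0,\dots,n\}$ are never congruent modulo $N$ — exactly the condition that forces every cross term to vanish; taking $N=n+1$ (or any larger integer) suffices.
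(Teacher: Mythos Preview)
Your proof is correct and follows essentially the same averaging-over-roots-of-unity idea as the paper: the paper fixes $\omega=e^{i\pi/n}$ (a primitive $2n$th root of unity) and averages the $2n$ values $|P(\omega^{k}z)|^{2}$, while you take any $N\geq n+1$ and average $|P(r\omega^{l})|^{2}$ over $N$th roots of unity---the orthogonality computation and the concluding squeeze are identical. Your observation that $N=n+1$ already suffices is a minor sharpening, but the method is the same.
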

\begin{proof} Let us consider the number $\omega =e^{i\pi/n}$ (thus, $\omega^n=-1$) and the $2n$ polynomials $P_k(z)
=P(\omega^kz )$, where $0\leq k \leq 2n-1$. A short computation reveals that
\begin{equation}\label{EQ3.1.1}
 |P_k(z)|^2  = \sum_{0\leq j\leq
n}|a_j|^{\,2}\,|z|^{2j}  + \, 2\sum_{0\,\leq\mu< \nu\,\leq 
n}\textrm{Re}\left[\overline{a_{\mu}}\,\overline{z}^{\,\mu}a_{\nu}z^{\,\nu}
\overline{\omega}^{\,k\mu}\omega^{k\nu}\right],
\end{equation} 
and for $\mu<\nu$, the difference $\nu -\mu$ runs over $ \{1,\ldots ,n\}$. Writing $\overline{\omega}^{\,k\mu}\omega^{\,k\nu}=\omega^{k(\nu -\mu)}$ we obtain the finite geometric sum
\[\sum\limits_{k=0}^{2n-1}\omega^{k(\nu -\mu)}=  
\frac{1 -\omega^{2n(\nu -\mu)}}{1 - \omega^{\nu -\mu}} = 0.\]
Hence, for $0\leq \mu <\nu\leq n$, it follows that
$\sum_{k=0}^{2n-1}\left[\overline{a_{\mu}}\,\overline{z}^{\,\mu}a_{\nu}z^{\,\nu}   \overline{\omega}^{\,k\mu}\omega^{\,k\nu}\right] = 0$. Thus, employing (\ref{EQ3.1.1}) and these identities we arrive at 
\begin{equation}\label{EQ3.1.2}
 \sum_{k=0}^{2n-1}|P_k(z)|^2 = 2n\sum_{j=0}^n|a_j|^{\,2}\,|z|^{2j} .
\end{equation}
Now, since  $\min\limits_{|z|=r}|P_k(z)|=\min\limits_{|z|=r}|P(z)|$ and  
$\max\limits_{|z|=r}|P_k(z)|=\max\limits_{|z|=r}|P(z)|$, we deduce that
\[2nm(r)^2 \leq \sum_{k=0}^{2n-1}|P_k(z)|^2 \leq 2n M(r)^2, \ \textrm{if}\ |z|=r .\]
The claimed inequalities follow from these and (\ref{EQ3.1.2}).
\end{proof}
\begin{remark}\label{REM32} It is rather trivial to produce a proof of Lemma \ref{LEM31} that does not employ polynomial continuity. To do so, it is enough to replace $m(r)$ and $M(r)$ by $\inf\{|P(z)|: |z|=r\}$ and $\sup\{|P(z)|:|z|=r\}$, respectively. 
\end{remark}

\begin{theorem}\label{TEO33} ({\sf The Gutzmer-Parseval Inequality for Analytic Functions)}
Let $f(z)=\sum a_nz^n$ be a convergent power series in $D(0;R)$, where $R>0$. Given $r$ such that $0\leq r<R$, we have  
\[\sum|a_n|^{\,2}r^{2n}\leq M(r)^2, \ \textrm{where}\ M(r)=\max_{|z|=r}|f(z)| .\]
\end{theorem}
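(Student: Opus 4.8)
The plan is to reduce the analytic case to the polynomial case already established in Lemma~\ref{LEM31}, using the partial sums of the power series as the approximating polynomials and passing to the limit. First I would fix $r$ with $0\le r<R$ and choose an intermediate radius, say $r<s<R$, so that the power series converges uniformly on $\overline{D}(0;s)$ by the Cauchy--Hadamard result; in particular the partial sums $P_N(z)=\sum_{j=0}^{N}a_jz^j$ converge uniformly to $f$ on the circle $|z|=r$. Hence $M_N(r)=\max_{|z|=r}|P_N(z)|$ converges to $M(r)=\max_{|z|=r}|f(z)|$ as $N\to\infty$. Applying the right-hand inequality of Lemma~\ref{LEM31} to each $P_N$ (for $N\ge 1$) gives
\[
\sum_{j=0}^{N}|a_j|^{2}r^{2j}\le M_N(r)^2 .
\]

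Now I would let $N\to\infty$. The right-hand side converges to $M(r)^2$ by the uniform convergence noted above. The left-hand side is a monotone nondecreasing sequence of partial sums of a series of nonnegative terms, so it converges (possibly to $+\infty$ a priori) to $\sum_{n=0}^{\infty}|a_n|^2 r^{2n}$; since each term of the sequence is bounded above by $M_N(r)^2$, the limit is bounded above by $M(r)^2$, which in particular shows the series converges and
\[
\sum_{n=0}^{\infty}|a_n|^{2}r^{2n}\le M(r)^2 .
\]
The case $r=0$ is trivial, since the inequality reads $|a_0|^2\le |f(0)|^2$, an equality. This handles all $0\le r<R$.

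There is essentially one point requiring a little care, which I expect to be the only real obstacle: justifying that $M_N(r)\to M(r)$. This follows from the uniform estimate $\bigl|\,|P_N(z)|-|f(z)|\,\bigr|\le |P_N(z)-f(z)|\le \sup_{|z|=r}|P_N(z)-f(z)|\to 0$, which gives $|M_N(r)-M(r)|\le \sup_{|z|=r}|P_N(z)-f(z)|\to 0$; the uniform convergence of the partial sums on $|z|=r$ is exactly the content of the Cauchy--Hadamard result quoted in the Preliminaries, applied on the compact disk $\overline{D}(0;r)\subset D(0;R)$. One should also note that Lemma~\ref{LEM31} is stated for polynomials of degree $n\ge 1$, so if $f$ happens to be constant (all $a_n=0$ for $n\ge 1$) the inequality is again the trivial equality $|a_0|^2\le|f(0)|^2$, and otherwise $P_N$ has positive degree for all large $N$ and the lemma applies. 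No use of the minimum-modulus (left) inequality is needed here, only the maximum side.
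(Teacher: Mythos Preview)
Your proof is correct and follows essentially the same approach as the paper: apply Lemma~\ref{LEM31} to the partial sums $P_N$ and pass to the limit as $N\to\infty$. The only difference is a minor streamlining on the paper's side: instead of proving $M_N(r)\to M(r)$, the paper bounds $|P_N(z)|\le |f(z)|+\bigl|\sum_{n>N}a_nz^n\bigr|\le M(r)+\sum_{n>N}|a_n|r^n$ directly, applies Lemma~\ref{LEM31} with this uniform bound on the right, and then lets the tail $\sum_{n>N}|a_n|r^n$ tend to~$0$.
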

\begin{proof} Let $z$ be arbitrary in $\mathbb C$, with $|z|=r$. From the triangle inequality follows that
\[\Big|\sum_{n=0}^Na_nz^n\Big| \leq M(r)\, +\,
\Big|\sum_{n=N+1}^{+\infty}a_nz^n\Big| \leq M(r)\, +\,
\sum_{n=N+1}^{+\infty}|a_n|r^n,\ \textrm{for all} \ N\in \mathbb N.\]   
Thus, by the Gutzmer-Parseval inequality for polynomials (Lemma \ref{LEM31}) we obtain
\[\sum_{n=0}^N|a_n|^2r^{2n} \leq \left[\,M(r)\,+\,
\sum_{n=N+1}^{+\infty}|a_n|r^n\,\right]^2,\ \textrm{for all} \ N \in \mathbb N.\] 
Passing the last inequality to the limit as $N\to +\infty$, the claimed inequality follows.
\end{proof}

\begin{remark}\label{REM34} By redefining $M(r)$ as $\sup\{|f(z)|:|z|=r\}$, we obtain a proof of Theorem \ref{TEO33} that does not use function continuity. See also Remark \ref{REM32}.
\end{remark}

\begin{remark}\label{REM35} From Remark \ref{REM34} follows a proof of {\sf The Uniqueness Theorem for the Coefficients of a Power Series} (see ~\cite[pp.~112--113]{BE}) that does not employ function continuity. In fact, let us suppose that $\sum a_nz^n$ and $\sum b_nz^n$ satisfy $\sum a_nz^n=\sum b_nz^n$, for all $|z| <R$. Hence, $\sum (a_n-b_n)z^n$ vanishes everywhere in $D(0;R)$. By Remark \ref{REM34}, we obtain $ \sum|a_n-b_n|^2r^{2n}\leq 0$ for all $r$ such that $0\leq r<R$. Thus, we have $a_n=b_n$ for all $n \in \mathbb N$. 
\end{remark}

\begin{corollary}\label{COR36}({\sf Cauchy's Inequalities})
Keeping the theorem's notation, we have  
\[ |a_n|\leq \frac{M(r)}{r^n}\,,\ \textrm{for all}\  n \in \mathbb N.\] 
\end{corollary}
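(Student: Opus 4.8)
The plan is to derive Cauchy's Inequalities as an immediate consequence of Theorem~\ref{TEO33}. Since every term in the series $\sum |a_n|^2 r^{2n}$ is non-negative, each individual term is bounded above by the whole sum, so in particular $|a_n|^2 r^{2n} \leq \sum_{k} |a_k|^2 r^{2k} \leq M(r)^2$ for every $n \in \mathbb{N}$. Taking square roots (both sides being non-negative) gives $|a_n| r^n \leq M(r)$.

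The only mild subtlety is the division by $r^n$: the statement as phrased inherits the theorem's range $0 \leq r < R$, but the conclusion $|a_n| \leq M(r)/r^n$ only makes sense for $r > 0$. So I would state the argument for $0 < r < R$, where dividing the inequality $|a_n| r^n \leq M(r)$ by $r^n > 0$ yields exactly $|a_n| \leq M(r)/r^n$ for all $n \in \mathbb{N}$. (The case $r = 0$ is vacuous or degenerate and not intended.)

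There is no real obstacle here — this corollary is a one-line extraction from the bound already established. If one wishes, one can also note via Remark~\ref{REM34} that this proof of Cauchy's Inequalities likewise avoids any appeal to function continuity, by taking $M(r)$ to mean $\sup\{|f(z)| : |z| = r\}$. The full proof would read roughly: fix $r$ with $0 < r < R$ and $n \in \mathbb{N}$; since all summands are non-negative, $|a_n|^2 r^{2n} \leq \sum_k |a_k|^2 r^{2k} \leq M(r)^2$ by Theorem~\ref{TEO33}; extracting square roots and dividing by $r^n$ gives the claim.
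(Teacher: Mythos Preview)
Your proof is correct and matches the paper's own approach: the paper simply says ``It is straightforward from Theorem~\ref{TEO33},'' and your argument---isolating a single non-negative term from $\sum |a_n|^2 r^{2n} \leq M(r)^2$, taking square roots, and dividing by $r^n$---is exactly the intended straightforward extraction. Your remark about needing $r>0$ for the division is a sensible clarification.
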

\begin{proof} It is straightforward from Theorem \ref{TEO33}.
\end{proof}

\begin{corollary}\label{COR37} Let us keep the hypothesis and the notation in Theorem \ref{TEO33}. If
$\sup\{|f(z)|: z\in D(0;R)\}\leq M$ and $0<r<R$, then we have 
\[\max\limits_{\overline{D}(0;\,r)}|f'|\leq \frac{M}{R -r} .\]  
\end{corollary}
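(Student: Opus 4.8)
The plan is to bound the coefficients of the derivative series directly via Cauchy's Inequalities (Corollary \ref{COR36}) applied on a slightly larger circle, and then sum. First I would write $f(z)=\sum a_n z^n$ on $D(0;R)$, so that $f'(z)=\sum_{n\geq 1} n a_n z^{n-1}$ is also a convergent power series on $D(0;R)$. Fix $r$ with $0<r<R$ and choose any $\sigma$ with $r<\sigma<R$. Since $\sup_{D(0;R)}|f|\leq M$, in particular $M(\sigma)=\max_{|z|=\sigma}|f(z)|\leq M$, so Cauchy's Inequalities give $|a_n|\leq M/\sigma^n$ for every $n$.

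Next I would estimate $|f'|$ on $\overline{D}(0;r)$ termwise: for $|z|\leq r$,
\[
|f'(z)| \;\leq\; \sum_{n=1}^{+\infty} n|a_n|\,r^{\,n-1} \;\leq\; \sum_{n=1}^{+\infty} n\,\frac{M}{\sigma^{n}}\,r^{\,n-1} \;=\; \frac{M}{\sigma}\sum_{n=1}^{+\infty} n\Big(\frac{r}{\sigma}\Big)^{n-1} \;=\; \frac{M}{\sigma}\cdot\frac{1}{\big(1-\frac{r}{\sigma}\big)^{2}} \;=\; \frac{M\sigma}{(\sigma-r)^{2}},
\]
using the elementary identity $\sum_{n\geq 1} n t^{n-1}=(1-t)^{-2}$ for $|t|<1$ with $t=r/\sigma<1$. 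This holds for every admissible $\sigma\in(r,R)$, so $\max_{\overline{D}(0;r)}|f'|\leq \inf_{r<\sigma<R}\frac{M\sigma}{(\sigma-r)^2}$. Letting $\sigma\to R^{-}$ yields $\max_{\overline{D}(0;r)}|f'|\leq \frac{MR}{(R-r)^{2}}$.

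The only subtlety — and the point I would flag as the thing to get right — is that the statement claims the cleaner bound $M/(R-r)$, whereas the naive termwise argument above produces $MR/(R-r)^2$, which is weaker when $r$ is small but genuinely sharper-looking only near $r=R$. To recover exactly $M/(R-r)$ one should instead apply Cauchy's Inequalities to the Taylor expansion of $f$ recentered at a point $w$ with $|w|=r$: on the disk $D(w;R-r)\subset D(0;R)$ we have $f(\zeta)=\sum_{k\geq0} b_k(w)(\zeta-w)^k$ with $|f|\leq M$ there, so Corollary \ref{COR36} (with radius $R-r-\varepsilon$, then $\varepsilon\to0$) gives $|f'(w)|=|b_1(w)|\leq M/(R-r)$. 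Since $w$ ranges over all of the circle $|z|=r$ and, by the maximum of $|f'|$ on the closed disk being attained on the boundary circle (or simply by running the same argument for every $|w|\leq r$, noting $D(w;R-|w|)\supset D(w;R-r)$), this bounds $\max_{\overline{D}(0;r)}|f'|$ by $M/(R-r)$, which is the asserted inequality. I expect the recentering step to be the main obstacle only in the bookkeeping sense — it requires invoking the earlier listed fact that the Taylor series of $f$ around $w$ converges to $f$ on $D(w;R-|w|)$ — but no new idea beyond Cauchy's Inequalities is needed.
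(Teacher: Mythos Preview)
Your final argument---recentering the Taylor expansion at an arbitrary point $w\in\overline{D}(0;r)$, noting that $D(w;R-r)\subset D(0;R)$, and applying Cauchy's inequality to the first coefficient $b_1(w)=f'(w)$---is correct and is exactly the paper's proof (the paper phrases it via Theorem~\ref{TEO33} rather than Corollary~\ref{COR36}, but that is the same inequality for the $n=1$ term). The preliminary termwise estimate at the origin that yields $MR/(R-r)^2$ is a detour you yourself flag as insufficient, so it can simply be dropped; also, rely on your parenthetical ``run the argument for every $|w|\le r$'' rather than invoking a maximum principle for $|f'|$, which has not been established at this point.
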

\begin{proof} The Taylor series of $f$ centered at an arbitrary $z_0$ in
$\overline{D}(0;r)$ is   
\[f(z) = \sum \frac{f^{(n)}(z_0)}{n!}(z-z_0)^n, \ \textrm{if}\  |z-z_0|<R -r.\]
Hence, from Theorem \ref{TEO33} follows 
\[|f'(z_0)|\,|z-z_0|\leq M, \ \textrm{if}\  |z-z_0|<R -r .\]
Taking $|z-z_0|$, with $|z-z_0|<R-r$, arbitrarily near $R-r$, at the limit we find that $|f'(z_0)|(R -r)\leq M $.
\end{proof}

\begin{theorem}\label{TEO38} ({\sf Maximum Modulus Principle}) Let $f:\Omega \to \mathbb C$, where $\Omega$ is open and connected, be analytic. If $|f|$ has a local maximum, then $f$ is a constant. 
\end{theorem}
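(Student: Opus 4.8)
The plan is to reduce the Maximum Modulus Principle to the Gutzmer-Parseval inequality for analytic functions (Theorem~\ref{TEO33}), exploiting that the hypothesis gives a local maximum of $|f|$. Suppose $|f|$ attains a local maximum at some point $z_0\in\Omega$, so there is $R>0$ with $\overline{D}(z_0;R)\subset\Omega$ and $|f(z)|\le|f(z_0)|$ for all $z\in D(z_0;R)$. Writing the Taylor series of $f$ centered at $z_0$ as $f(z)=\sum a_n(z-z_0)^n$ on $D(z_0;R)$, we have $a_0=f(z_0)$, and for every $r$ with $0\le r<R$ the quantity $M(r)=\max_{|z-z_0|=r}|f(z)|$ satisfies $M(r)\le|f(z_0)|=|a_0|$. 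Theorem~\ref{TEO33} then yields $\sum_n|a_n|^2r^{2n}\le M(r)^2\le|a_0|^2$, so $\sum_{n\ge1}|a_n|^2r^{2n}\le 0$, forcing $a_n=0$ for all $n\ge1$. Hence $f$ is constant, equal to $f(z_0)$, on the whole disk $D(z_0;R)$.

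From here the conclusion propagates across $\Omega$ by connectedness. I would consider the set $U=\{z\in\Omega: f$ is constant on some open disk centered at $z$ with value $f(z_0)\}$. By the previous paragraph $U$ is nonempty ($z_0\in U$), and it is clearly open. To see it is closed in $\Omega$, take $w$ in the closure of $U$ within $\Omega$: since $f$ and all its derivatives are continuous (analytic functions being infinitely differentiable), and every point of $U$ is a zero of $f-f(z_0)$ together with all its derivatives, the same holds at $w$ by continuity; developing $f$ in its Taylor series around $w$ shows $f\equiv f(z_0)$ near $w$, so $w\in U$. Thus $U$ is a nonempty clopen subset of the connected set $\Omega$, hence $U=\Omega$, and $f$ is the constant $f(z_0)$.

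Alternatively, and perhaps more cleanly in keeping with the paper's tools, one can invoke the Principle of Isolated Zeros for $\mathcal{A}(\Omega)$ (Proposition~\ref{PROP23}) applied to $g=f-f(z_0)$: we have shown $g$ vanishes on a whole open disk $D(z_0;R)$, in particular $\mathcal{Z}(g)$ is not an isolated subset of $\Omega$, so by the contrapositive of that proposition $g$ must be the null function, i.e.\ $f\equiv f(z_0)$. I expect the main point requiring care is the very first step, namely justifying $M(r)\le|f(z_0)|$ for \emph{all} $r<R$ from a merely local maximum: this only works because the local maximum, by definition, dominates $|f|$ on a full neighborhood, so shrinking $R$ to fit inside that neighborhood is legitimate. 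After that, everything is an immediate application of already-established results, and no integration or further estimates are needed.
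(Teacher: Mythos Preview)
Your proof is correct and follows essentially the same route as the paper: apply the Gutzmer--Parseval inequality (Theorem~\ref{TEO33}) to the Taylor expansion at the local maximum to force all higher coefficients to vanish, then propagate the constancy to all of $\Omega$ via the Principle of Isolated Zeros (Proposition~\ref{PROP23}). The paper simply normalizes $z_0=0$ and invokes Proposition~\ref{PROP23} directly, whereas you also spell out the equivalent clopen argument, but the substance is identical.
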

\begin{proof} Let $z_0$ be a point of local maximum of $|f|$. We can
clearly assume that $z_0=0$. Expressing $f$ by its Taylor
series around $z_0=0$, we write $f(z) =\sum a_nz^n$ with $z$ in a disk
$D(0;\delta)$, where $\delta >0$. By hypothesis, there is $r$, where $0<r<\delta$,
such that  
\[|\sum a_nz^n|\leq |f(0)|=|a_0|, \ \textrm{for all} \ z \in \overline{D}(0;r) .\]
Hence, from Theorem \ref{TEO33} follows
\[|a_0|^2 + \sum_{n=1}^{+\infty} |a_n|^2\,r^{2n} \leq|a_0|^2 .\]
Thus, we obtain $a_n=0$ if $n\geq 1$ and $f(z)=a_0$ for all $z\in D(0;r)$. By Proposition \ref{PROP23} (principle of isolated zeros), $f$ is a constant. 
\end{proof}

\begin{remark}\label{REM39} The strictly algebraic result asserting that a complex polynomial of degree $n\geq 1$ has at most $n$ zeros is well-known. Consequently, adapting the proof of Theorem \ref{TEO38} (maximum modulus principle) to the case where $f$ is a polynomial, in a very obvious way, it is rather easy to see that by employing Lemma \ref{LEM31} (Gutzmer-Parseval inequality for polynomials) and Remark 3.2, one can produce a trivial proof of the {\sf Maximum Modulus Principle for Polynomials} that does not require polynomial continuity. 
\end{remark}

\begin{remark}\label{REM310} The maximum modulus principle (Theorem \ref{TEO38}) yields a proof of the fundamental theorem of algebra that is very much like the classical proof (that employs Liouville's Theorem for holomorphic functions, see Theorem \ref{TEO312}) usually given in regular courses. In fact, suppose that there exists a complex polynomial $p=p(z)$, degree $(p)\geq 1$, with no zeros in $\mathbb C$. Therefore, $1/p$ is analytic in $\mathbb C$. Since $|p(z)|\to +\infty$ as $|z|\to +\infty$ (see \cite{OO1}), then $1/|p(z)|\to 0$ as $|z|\to +\infty$. Thus, $1/|p|$ has a global maximum at some $z_0\in \mathbb C$. From Theorem \ref{TEO38} it may be concluded that $1/p$ is a constant. Hence, degree$(p)=0$; which is a contradiction. 
\end{remark}
 
\begin{definition}\label{DEF311} An analytic function $f$ is \textit{entire} if its domain is $\mathbb C$ [i.e., $f\in \mathcal{A}(\mathbb C)$]. 
\end{definition}

Next, in Theorem \ref{TEO312} we prove Liouville's Theorem for an entire function $f$ supposing that $f$ is given by its Taylor series at the origin. We demonstrate on Theorem \ref{TEO91} (a result due to Hurwitz, Read, and  Connell-Porcelli) that such expansion occurs for every entire function.

The following proof of Theorem \ref{TEO312} does not employ function continuity.

\begin{theorem}\label{TEO312}({\sf Liouville}) Let $f(z)=\sum a_nz^n$ be
bounded and convergent in $\mathbb C$. Then, $f$ is a constant. 
\end{theorem}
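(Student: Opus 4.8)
The plan is to derive Liouville's Theorem directly from the Gutzmer--Parseval Inequality for Analytic Functions (Theorem~\ref{TEO33}), which is already available to us and which, crucially, applies to the full power series $f(z)=\sum a_nz^n$ on the entire plane. First I would record the hypothesis: since $f$ is bounded on $\mathbb{C}$, there is a constant $M>0$ with $|f(z)|\le M$ for all $z\in\mathbb{C}$. In particular, for every $r>0$ we have $M(r)=\max_{|z|=r}|f(z)|\le M$ (or one may use $\sup\{|f(z)|:|z|=r\}\le M$ to avoid any appeal to continuity, as in Remark~\ref{REM34}).

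Next I would invoke Theorem~\ref{TEO33} with $R=+\infty$: for every $r>0$,
\[
\sum_{n=0}^{+\infty}|a_n|^{\,2}r^{2n}\le M(r)^2\le M^2.
\]
In particular, truncating the left-hand side, for every fixed $n\ge 1$ and every $r>0$ we get $|a_n|^{\,2}r^{2n}\le M^2$, that is, $|a_n|\le M/r^n$. Now I would let $r\to+\infty$: since $n\ge 1$, the right-hand side tends to $0$, forcing $a_n=0$ for all $n\ge 1$. Hence $f(z)=a_0$ is constant.

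There is essentially no obstacle here: the entire difficulty has been absorbed into Theorem~\ref{TEO33}, whose proof via the polynomial Gutzmer--Parseval inequality (Lemma~\ref{LEM31}) and a truncation/limiting argument is the real content. The only point requiring a moment's care is that the Gutzmer--Parseval inequality be legitimately applicable on all of $\mathbb{C}$, which it is precisely because $f$ is assumed convergent on $\mathbb{C}$; this is exactly why the theorem is stated for a function \emph{given} by its Taylor series at the origin, the gap being filled later by Theorem~\ref{TEO91}. One could alternatively phrase the final step through Cauchy's Inequalities (Corollary~\ref{COR36}), which give $|a_n|\le M(r)/r^n\le M/r^n$ directly, and then let $r\to+\infty$; this is the same argument repackaged, and either formulation keeps the proof integration-free and, if one uses the supremum formulation of $M(r)$, free of any appeal to continuity.
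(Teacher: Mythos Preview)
Your proof is correct and follows essentially the same route as the paper: both bound $|f|$ by a constant $M$, apply the Gutzmer--Parseval inequality (in the supremum form of Remark~\ref{REM34} to avoid continuity) to get $\sum|a_n|^2r^{2n}\le M^2$ for every $r\ge 0$, and then conclude $a_n=0$ for all $n\ge 1$ by letting $r\to+\infty$. Your additional remarks about the equivalent formulation via Corollary~\ref{COR36} and the role of Theorem~\ref{TEO91} are also in line with the surrounding discussion in the paper.
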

\begin{proof} Let $M\in \mathbb R$ be such that $|f(z)|\leq M$, for all $z\in \mathbb C$. Employing Remark \ref{REM34} and a calculation similar to the one in the proof of Theorem \ref{TEO33}, we find (without using function continuity) the inequality 
\[\sum |a_n|^2r^{2n}\leq M^2, \ \textrm{for all}\ r\geq 0.\]
Hence, for a fixed $n\geq 1$ we obtain $|a_n|^2r^{2n}\leq M^2$, for all $r\geq 0$, which implies that $a_n=0$. As a result, we conclude the identity $f(z)=a_0$ for all $z\in \mathbb C$.
\end{proof}

\begin{remark}\label{REM313} Similarly to Theorem \ref{TEO312}, we have {\sf The Extended Liouville Theorem:} {\sl Let us suppose that $f(z)=\sum a_nz^n$ converges in the complex plane. If there are constants $A\geq 0$, $B\geq 0$, and  $N\in \mathbb N$ such that 
\[|f(z)| \leq A + B|z|^N,\ \textrm{for all}\ z \in \mathbb C\,,\ \]
then $f$ is a polynomial and degree$(f)\leq N$.} The proof is easy. In fact, since we have
$|\sum a_nz^n|\leq A+Br^N$, for all $|z|=r$, by employing Theorem \ref{TEO33} (the Gutzmer-Parseval inequaliy for analytic functions) and a
straightforward inequality, we conclude that 
\[\sum|a_n|^2r^{2n} \leq (A+Br^N)^2 \leq  C+Dr^{2N}, \ \textrm{for all}\ r\geq
0, \ \textrm{with}\ C\geq 0\ \textrm{and}\ D\geq 0\ \textrm{ constants}.\]
Hence, we have $a_n=0$ for all $n>N$. Thus, $f$ is a polynomial and degree$(f)\leq N$. 
\end{remark}

\

\section{The Maximum and Minimum Modulus Principles }

In spite of the Maximum Modulus Principle for analytic functions (Theorem \ref{TEO38}) being an easy consequence of the Gutzmer-Parseval Inequality for Analytic Functions (Theorem \ref{TEO33}), we present another proof of such principle in Theorem \ref{TEO41} on account of the following four reasons: (1) it furnishes analogous, independent, and easy direct proofs of the Maximum and Minimum Modulus Principles for analytic functions and also for polynomials (the proofs for polynomials can be freed from the exponential function by adapting the proof of the fundamental theorem of algebra in \cite{OO1}); (2) although the Minimum Modulus Principle for $f\in\mathcal{A}(\Omega)$ follows easily from the Maximum Modulus Principle for $f\in \mathcal{A}(\Omega)$ by applying the latter to $1/f$ if the function $f$ does not vanish, such an argument cannot be used if we are restricted to the algebra of polynomials because the function $1/P$ is not a polynomial if $P$ is a non-constant polynomial; (3) the importance of the Minimum Modulus Principle for polynomials in proving the fundamental theorem of algebra (see  \cite{OO1} and Remmert  \cite[p. 112]{RR2}); (4) its simplicity and usefulness in analyzing saddle points of $|f|$, where $f$ is an analytic function, and in proving the Anti-Calculus Proposition (Theorem \ref{TEO52}) and the Clunie-Jack Lemma (Theorem \ref{TEO102}). 

\begin{theorem}\label{TEO41} Let $f:\Omega \to \mathbb C$, where $\Omega$ is an open and connected  set in $\mathbb
C$, be analytic and non-constant. Then, 
\begin{itemize}
\item[(a)] ({\sf Maximum Modulus Principle}) $|f|$ has no local maximum.
\item[(b)] ({\sf Minimum Modulus Principle}) $|f|$ has no local minimum at
$z_0$ in $\Omega$, unless $f(z_0)=0$. 
\end{itemize}
\end{theorem}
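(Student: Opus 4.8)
The plan is to derive both parts of Theorem~\ref{TEO41} from the Gutzmer--Parseval Inequality for Analytic Functions (Theorem~\ref{TEO33}), mimicking closely the strategy already used in the proof of Theorem~\ref{TEO38}, but squeezing out slightly more information so that the Minimum Modulus Principle comes out by the same mechanism rather than by inverting $f$. Throughout I would reduce to the case $z_0=0$ by translation, and expand $f$ in its Taylor series $f(z)=\sum a_n z^n$, convergent on some $D(0;\delta)$ with $\delta>0$; since $f$ is non-constant and $\Omega$ is connected, by Proposition~\ref{PROP23} (the principle of isolated zeros) not all of $a_1,a_2,\dots$ vanish, so there is a smallest index $k\geq 1$ with $a_k\neq 0$.

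For part (a): suppose $|f|$ had a local maximum at $0$. Then for all small $r$ we have $|f(z)|\leq |a_0|$ on $\overline{D}(0;r)$, so $M(r)\leq|a_0|$, and Theorem~\ref{TEO33} gives $|a_0|^2+\sum_{n\geq 1}|a_n|^2 r^{2n}\leq|a_0|^2$, forcing every $a_n=0$ for $n\geq1$, i.e.\ $f\equiv a_0$ near $0$, contradicting that $f$ is non-constant. This is essentially verbatim the argument in Theorem~\ref{TEO38}; the only novelty is phrasing the conclusion as a contradiction with non-constancy via Proposition~\ref{PROP23}.

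For part (b): assume $|f|$ has a local minimum at $0$ and $f(0)=a_0\neq 0$; I want a contradiction. The difficulty is that Theorem~\ref{TEO33} bounds $\sum|a_n|^2 r^{2n}$ from above by $M(r)^2$, whereas a minimum condition $|f(z)|\geq |a_0|$ only gives a \emph{lower} bound $m(r)\geq|a_0|$. To convert a lower modulus bound into something Theorem~\ref{TEO33} can use, the natural move is to apply the Gutzmer--Parseval inequality to the \emph{reciprocal}: since $a_0\neq 0$, by the basic fact on reciprocals of power series, $g(z)=1/f(z)=\sum b_n z^n$ converges on some $D(0;\eta)$ with $b_0=1/a_0\neq 0$, and a local minimum of $|f|$ at $0$ is a local maximum of $|g|$ at $0$. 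Then part (a), already proved, applies to $g$ --- but one must check $g$ is non-constant, which holds because $f$ is non-constant (if $g$ were constant then $f=1/g$ would be constant). So $|g|$ cannot have a local maximum, contradiction. If I instead wanted to avoid the reciprocal (to keep the polynomial-adaptable flavour advertised in the remark preceding the theorem), I would argue directly: from $|f(z)|^2\geq|a_0|^2$ on $|z|=r$ together with a lower Gutzmer--Parseval-type inequality $m(r)^2\leq \sum|a_n|^2 r^{2n}$, I would instead need an \emph{upper} companion bound for the minimum, which is not available from Theorem~\ref{TEO33} alone; hence I expect the reciprocal route to be the clean one here, while a fully self-contained averaging argument paralleling Lemma~\ref{LEM31} would be the route for the polynomial case.

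The main obstacle is precisely this asymmetry in part (b): Theorem~\ref{TEO33} is one-sided (upper bounds only), so the minimum principle does not fall out of it directly, and one is pushed either to pass to $1/f$ (easy, but uses that $f$ is analytic, not polynomial) or to prove a genuinely two-sided Gutzmer--Parseval estimate in the style of Lemma~\ref{LEM31}. I would present the $1/f$ argument as the proof, noting the non-vanishing hypothesis $f(z_0)\neq 0$ is exactly what makes the reciprocal analytic near $z_0$, and remark that the polynomial version requires the separate two-sided lemma already available as Lemma~\ref{LEM31}.
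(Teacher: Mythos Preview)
Your argument is correct: part~(a) is exactly the Gutzmer--Parseval route of Theorem~\ref{TEO38}, and part~(b) via $g=1/f$ is valid once you observe (as you do) that $g$ is analytic on some disk $D(0;\eta)$, non-constant there by Proposition~\ref{PROP23}, and has a local maximum of $|g|$ at $0$.

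However, the paper deliberately takes a different route, and in fact the preamble to Section~4 explicitly lists avoiding the $1/f$ trick as one of the reasons for presenting this proof. The paper's argument treats both (a) and (b) simultaneously by writing $f(z)=f(0)+z^k\varphi(z)$ with $\varphi(0)\neq 0$ and expanding
\[
|f(re^{i\theta})|^2-|f(0)|^2
=2r^k\,\mathrm{Re}\big[\overline{f(0)}\,\varphi(re^{i\theta})e^{ik\theta}\big]+r^{2k}|\varphi(re^{i\theta})|^2.
\]
The left side has constant sign near $0$ (either $\leq 0$ at a maximum or $\geq 0$ at a minimum); dividing by $r^k$ and letting $r\to 0^+$ forces $\mathrm{Re}[\overline{f(0)}\varphi(0)e^{ik\theta}]$ to have constant sign for all $\theta$, which is impossible unless $\overline{f(0)}\varphi(0)=0$, i.e.\ $f(0)=0$. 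This single computation yields both principles at once. What it buys: the argument is symmetric in max/min, transfers verbatim to polynomials (where $1/P$ is unavailable), and is the template reused in the Anti-Calculus Proposition (Theorem~\ref{TEO52}), the saddle-point theorem (Theorem~\ref{TEO55}), the Open Mapping Theorem (Theorem~\ref{TEO73}), and the Clunie--Jack Lemma (Theorem~\ref{TEO102}). Your approach is shorter given Theorem~\ref{TEO33}, but it is precisely the one the paper is setting out to bypass, and it does not feed into those later applications.
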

\begin{proof} Let us suppose that $z_0=0$ is a point of local maximum or local minimum of $|f|$. Hence, the expression 
\begin{equation}\label{4.1.1}
 |f(z)|^2 -|f(0)|^2
\end{equation}
does not change sign [$\geq 0$ or $\leq 0$] in some disk $D(0;\epsilon)$,
with $\epsilon >0$, and by the principle of isolated zeros (Proposition \ref{PROP23}) there exists a natural number $k\geq 1$ and a function $\varphi$ analytic in $D(0;\epsilon)$ such that 
\begin{equation}\label{4.1.2}
 f(z) = f(0)+ z^k\varphi(z),\   \varphi(0)\neq  0.
\end{equation}
Putting $z=re^{i\theta}$, with $0<r<\epsilon$ and $\theta\in \mathbb R$, and
combining the expressions (\ref{4.1.1}) and (\ref{4.1.2}) we obtain 
\begin{equation}\label{4.1.3}
|f(re^{i\theta})|^2 -|f(0)|^2=2r^k\textrm{Re}\left[\overline{f(0)}e^{ik\theta}
\varphi(re^{i\theta})\right] + r^{2k}|\varphi(re^{i\theta})|^2  ,
\end{equation} 
whose sign is constant and unchanged when we divide the second member of
(\ref{4.1.3}) by $r^k$ (with $0<r<\epsilon$):   
\[2\textrm{Re}\big[\overline{f(0)}\varphi(re^{i\theta})e^{ik\theta}\big]
+ r^{k}|\varphi(re^{i\theta})|^2 .  \]  
Fixing $\theta \in \mathbb R$, by continuity the limit of the expression right above for $r\to 0^{+}$ is the expression 
\[2\textrm{Re}\big[\,\overline{f(0)}\varphi(0)e^{ik\theta}\,\big] \]  
that keeps the sign of the former expression, independently of $\theta\in
\mathbb R$. However, this is only possible if $\overline{f(0)}\varphi(0)=0$
[to see this, it is enough to choose values of $\theta$ such that
$e^{ik\theta}$ assumes the values $-1$, $+1$, $-i$, and $+i$]. Therefore, we deduce that $\overline{f(0)}\varphi(0)=0$ and then $f(0)=0$.

Consequently, if $z_0$ is a point of local maximum of $|f|$, then $f$ vanishes everywhere in a neighbourhood of $z_0$ and through the principle of isolated zeros we conclude that $f$ is null in $\Omega$, against the hypothesis. Thus, there is no such $z_0$. It is quite easy to turn this proof of the maximum modulus principle into a direct proof. 

If $z_0$ is a point of local minimum, then we have proved $f(z_0)=0$ as desired.
\end{proof}

Next, we give a very intuitive proof of Liouville's Theorem for a bounded power series convergent in  the entire complex plane, employing the maximum modulus principle [Theorem \ref{TEO41} (a)]. See also Theorem \ref{TEO312} and Remark 3.13.

\begin{theorem}\label{TEO42} ({\sf{Liouville}}) Let $f(z)=\sum a_nz^n$, where $z\in \mathbb C$, be a 
bounded function. Then, $f$ is constant. 
\end{theorem}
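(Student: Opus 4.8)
The plan is to deduce that $f$ is constant by applying the Maximum Modulus Principle [Theorem~\ref{TEO41}(a)] to the power series obtained from $f$ by subtracting its value at the origin and then dividing by $z$. Fix $M\ge 0$ with $|f(z)|\le M$ for every $z\in\mathbb C$. First I would write $f(z)=\sum a_nz^n$ on all of $\mathbb C$, set
\[
h(z)=\sum_{n=1}^{+\infty}a_nz^{n-1},
\]
and check (via the Cauchy-Hadamard formula, $h$ having the same radius of convergence as the derivative series $f'$) that $h$ is again a power series convergent, hence continuous, on all of $\mathbb C$, and that $f(z)=f(0)+z\,h(z)$ for every $z$.

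The heart of the argument is a circle-to-disk estimate for $h$. For any $R>0$ and any $z$ with $|z|=R$ one has $|h(z)|=|f(z)-f(0)|/R\le 2M/R$. I would then invoke the elementary corollary of Theorem~\ref{TEO41}(a) that the maximum of the modulus of an analytic function over a closed disk is attained on the bounding circle: restricting $h$ to a slightly larger open disk $D(0;R')$ with $R'>R$, if $h$ is non-constant there then $|h|$ has no interior local maximum, so, since $|h|$ is continuous on the compact disk $\overline{D}(0;R)$ and therefore attains its maximum there, that maximum must occur on $|z|=R$; if $h$ is constant on $D(0;R')$ the same conclusion is trivial. Either way,
\[
|h(z)|\le\frac{2M}{R}\qquad\textrm{for all}\ z\ \textrm{with}\ |z|\le R.
\]

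Finally I would let $R\to+\infty$: for a fixed point $z_0$ the displayed bound holds for every $R>|z_0|$, hence $h(z_0)=0$. Thus $h$ is identically $0$ and $f(z)=f(0)+z\,h(z)=f(0)$ for all $z\in\mathbb C$, so $f$ is constant. The only step that is more than bookkeeping is the circle-to-disk passage, and it is precisely where the Maximum Modulus Principle enters; since Theorem~\ref{TEO41}(a) is already available, the main (and minor) obstacle is merely to state that corollary cleanly, combining it with the extreme value theorem on the compact disk $\overline{D}(0;R)$.
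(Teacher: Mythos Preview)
Your proof is correct and follows essentially the same route as the paper's: both factor $f(z)-a_0=z\,\varphi(z)$ with $\varphi(z)=\sum_{n\ge 1}a_nz^{n-1}$ and then use the Maximum Modulus Principle (Theorem~\ref{TEO41}(a)) to force $\varphi\equiv 0$. The only cosmetic difference is that the paper notes $|\varphi(z)|\to 0$ as $|z|\to\infty$, so $|\varphi|$ has a global maximum and is therefore constant (hence zero), whereas you obtain the quantitative bound $|h|\le 2M/R$ on $\overline{D}(0;R)$ via the max-on-the-boundary corollary and then let $R\to\infty$.
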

\begin{proof} Clearly, the function $|f(z)-a_0|= |z||a_1+a_2z +a_3z^2+ \cdots |$ is bounded. Hence, if $|z|\to +\infty$, then $\varphi(z)=a_1+a_2z +a_3z^2+ \cdots $ tends to $0$ and the function $|\varphi(z)|$ has a global maximum. By the maximum modulus principle [Theorem \ref{TEO41} (a)], we see that $\varphi$ is constant. Thus, $\varphi$ is the zero function and $f$ is constant.
\end{proof}

\begin{remark} One can adapt the proof of Theorem \ref{TEO42} for an entire and bounded analytic function $f$ [i.e., $f$ is bounded and $f\in \mathcal{A}(\mathbb C)$]. In fact, applying the principle of isolated zeros (Proposition \ref{PROP23}) and writing $f(z)=a_0+z\varphi(z)$, with $\varphi$ analytic in $\mathbb C$, one can then proceed exactly as in the proof of Theorem \ref{TEO42}. 
\end{remark}

In Theorem \ref{TEO44} we present a proof of the maximum modulus principle for an analytic function that does not employ function continuity.

\begin{theorem}\label{TEO44} ({\sf Maximum Modulus Principle}) Let $f$ be analytic in an open and connected set $\Omega$. If the function $|f|$ has a local maximum at some $z_0\in \Omega$, then $f$ is a constant.
\end{theorem}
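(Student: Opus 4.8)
The plan is to run the proof of Theorem~\ref{TEO38} essentially unchanged, but to swap in continuity-free substitutes for the two places where that proof uses continuity: Theorem~\ref{TEO33} is replaced by its continuity-free version (Remark~\ref{REM34}), and the appeal to Proposition~\ref{PROP23} — whose proof rests on continuity of $f$ and its derivatives — is replaced by a hands-on globalization that uses only the continuity-free uniqueness statement of Remark~\ref{REM35}.

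First I would translate so that $z_0=0$ and write $f(z)=\sum a_nz^n$ on a disk $D(0;\delta)\subset\Omega$, which is immediate from Definition~\ref{DEF22}. The local maximum hypothesis gives some $r\in(0,\delta)$ with $|f(z)|\le|f(0)|=|a_0|$ for all $z\in\overline{D}(0;r)$. Applying the Gutzmer-Parseval inequality for analytic functions in the form of Remark~\ref{REM34} (that is, reading $M(r)$ as $\sup\{|f(z)|:|z|=r\}$) yields
\[
|a_0|^2+\sum_{n\ge 1}|a_n|^2r^{2n}=\sum|a_n|^2r^{2n}\le M(r)^2\le|a_0|^2,
\]
so $a_n=0$ for every $n\ge 1$ and hence $f\equiv a_0$ on $D(0;r)$.

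It then remains to pass from $f\equiv a_0$ on a disk to $f\equiv a_0$ on all of $\Omega$ without invoking continuity. I would set
\[
W=\{w\in\Omega:\ f\ \textrm{equals the constant}\ a_0\ \textrm{on some open disk centered at}\ w\},
\]
which is open by construction and nonempty because $0\in W$. To see that $W$ is closed in $\Omega$, take $w_n\in W$ with $w_n\to\zeta\in\Omega$; we may assume $w_n\ne\zeta$ for all $n$, and fix $\rho>0$ with $D(\zeta;\rho)\subset\Omega$ and $f(z)=\sum c_n(z-\zeta)^n$ on $D(\zeta;\rho)$. Choosing $N$ with $|w_N-\zeta|<\rho/2$, the function $f$ is identically $a_0$ on some small disk about $w_N$; since the Taylor series of $f$ about $w_N$ converges to $f$ on $D(w_N;\rho-|w_N-\zeta|)$ and coincides there, on that small disk, with the constant $a_0$, the continuity-free uniqueness of power-series coefficients (Remark~\ref{REM35}) forces this Taylor series to be identically $a_0$. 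Hence $f\equiv a_0$ on $D(w_N;\rho-|w_N-\zeta|)$, and since $|w_N-\zeta|<\rho-|w_N-\zeta|$ this disk contains $\zeta$, so $\zeta\in W$. As $\Omega$ is connected and $W$ is nonempty, open, and closed in $\Omega$, we conclude $W=\Omega$ and $f$ is constant.

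The step I expect to be the main obstacle is exactly this last globalization: one is tempted to finish with the principle of isolated zeros (Proposition~\ref{PROP23}) applied to $f-a_0$, but that route is not continuity-free. What rescues the argument is that here $f$ agrees with a constant not merely on a set with an accumulation point but on a full sub-disk about each point of $W$, which is precisely the hypothesis needed to feed Remark~\ref{REM35}; coupling this with the elementary fact already recorded among the preliminaries — that the Taylor series of $f$ about an interior point of a disk of convergence reproduces $f$ on a smaller concentric disk — is what makes the open–closed argument go through and spreads constancy across all of $\Omega$.
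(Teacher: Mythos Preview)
Your argument is correct. The local step---using Remark~\ref{REM34} to force $a_n=0$ for $n\ge 1$ from a local maximum of $|f|$---coincides with the paper's, but your globalization is genuinely different. The paper stays entirely constructive: inside a single disk of convergence $D(0;R)$ it computes the re-expansion $g(w)=f(z_0+w)=\sum b_mw^m$ explicitly, deduces constancy on $D(z_0;R-|z_0|)$, and then marches the center toward the origin along the radius with radii growing like $(3/2)^n(R-|z_0|)$ until $0$ is captured; for a general $\Omega$ it strings together finitely many such disks along a polygonal path. You instead run a clean open--closed connectedness argument on the set $W=\{w:f\equiv a_0 \text{ near }w\}$, and the closedness of $W$ is obtained by combining the continuity-free uniqueness of coefficients (Remark~\ref{REM35}) with the re-expansion preliminary (``the Taylor series of $f$ around $w$ converges to $f$ on $D(w;r-|w-z_0|)$''). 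Your route is shorter and conceptually standard, and it makes transparent exactly which continuity-free ingredients are doing the work; the paper's route, by contrast, is more explicit and avoids even the abstract open--closed step, which fits its aim of keeping the argument as elementary and self-contained as possible.
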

\begin{proof}
At first, let us first consider the case $f(z)=\sum a_nz^n$, with $z \in D(0;R)$ and $R>0$. Then, let us define $g(w)=f(z_0+w)$, with $|w|<R-|z_0|$. Since $\sum a_nz^n$ converges absolutely within $D(0;R)$, by a usual power series computation we have  
\begin{displaymath}
\left\{\begin{array}{ll}
g(w)=\sum\limits_{n=0}^{+\infty} a_n(z_0+w)^n= \sum\limits_{n=0}^{+\infty}\sum\limits_{m=0}^n\binom{n}{m}a_nz_0^{n-m}w^m= \sum\limits_{m=0}^{+\infty}b_mw^m,\\
\\
\textrm{where}\ b_m=\sum\limits_{n\geq m}a_n\binom{n}{m}z_0^{n-m}.
\end{array}
\right.
\end{displaymath}
Moreover, since $|g|$ has a local maximum at $w=0$, from Remark \ref{REM34} it follows (without employing function continuity) the inequality $|b_0|^2 + \sum_{m=1}^{+\infty} |b_m|^2\rho^{2m} \leq |b_0|^2$, for all $\rho >0$ and $\rho$ small enough. Hence, we obtain $b_m=0$, for all $ m\geq 1$, which implies that $g(w)=g(0)=f(z_0)$, for all $w \in D(0;R - |z_0|)$.

Thus, we proved that $f(z)=f(z_0)$, for all $z\in D(z_0;R-|z_0|)$ (the biggest open disk centered at $z_0$ and still inside the domain of $f$). Let us consider the subcase $0 \in D(z_0;R-|z_0|)$. Then, $0$ is a point of local maximum of $|f|$. Hence, by the previous argument we deduce that $f(z)=f(0)$, for all $ z \in D(0;R)$. The proof of this subcase is complete.

Let us now consider the subcase $0 \notin D(z_0;R-|z_0|)$. Then, the point $z_1= z_0 - \frac{R-|z_0|}{2}\frac{z_0}{|z_0|} \in D(z_0;R-|z_0|)$ is a point of local maximum of $|f|$. Hence, by the argument in the paragraph right above, it follows that $f(z)=f(z_1)=f(z_0)$, for all $z \in D(z_1; 3(R-|z_0|)/2)$, noticing that $3(R-|z_0|)/2=R-|z_1|$. Otherwise, if the point $0$ is in $D(z_1; R-|z_1|)$, then $0$ is a point of local maximum of $|f|$. Thus, we complete the proof of this subcase as in the paragraph right above. 

If $0\notin D(z_1;R-|z_1|)$, then proceeding as in the last paragraph we will eventually find a point $z_n= z_{n-1}- \frac{R -|z_{n-1}|}{2}\frac{z_{n-1}}{|z_{n-1}|}\in D(z_{n-1};R-|z_{n-1}|)$, with $n\geq 2$ and radius $R-|z_{n-1}|=(\frac{3}{2})^{n-1}(R-|z_0|)$, such that $f(z)=f(z_n)= \cdots =f(z_1)=f(z_0)$, for all $z \in D(z_n;R -|z_n|)$, with $0\in D(z_n;R-|z_n|)$. Hence, in such a subcase, we complete the proof as in the last paragraph. The proof of the first case is complete.

Now, let us consider the general case $f:\Omega \to \mathbb C$. Then, since $\Omega$ is open and connected, given any $w\in \Omega$, there exists a polygonal path $P=[z_0,z_{1}]\bigcup \ldots \bigcup[z_{n-1},z_{n}]$, where $n$ is fixed and $[z_j,z_{j+1}]=\{z_j+t(z_{j+1}-z_j):0\leq t\leq1\}$, for all $j\in \{0,\ldots, n-1\}$, inside $\Omega$ and connecting $z_0$ and $z_n=w$. Moreover, for every $\zeta\in P$, there exists $r_\zeta>0$ such that $f$ is developable as a power series centered at $\zeta$ and convergent in $D(\zeta;r_\zeta)$. Furthermore, since the polygonal $P$ is compact, there exists a smallest $m\geq 0$ such that $\zeta_0=z_0,\ldots,  \zeta_m$ satisfies $P\subset D(\zeta_0;r_{\zeta_0}) \bigcup \ldots \bigcup D(\zeta_m;r_{\zeta_m})$. Then, since the intersections $P\cap D(\zeta_j;r_{\zeta_j})$ are nonempty, for all $ j\in\{0,\ldots, m\}$, we can re-enumerate these disks so that for each $j \in\{1,\ldots, m\}$, the disk $D(\zeta_j;r_{\zeta_j})$ intersects the union $D(\zeta_0;r_{\zeta_0})\cup \ldots\cup D(\zeta_{j-1};r_{\zeta_{j-1}})$. Now, by the first case, we notice that we have $f(z)=f(z_0)$, for all $ z \in D(z_0;r_{z_0})$. Consequently, we obtain $f(z)=f(z_0)$, for all $z$ in  $D(\zeta_0;r_{\zeta_0})\cap D(\zeta_1;r_{\zeta_1})$. Therefore, by the first case, we conclude that $f(z)=f(z_0)$, for all $z \in D(\zeta_1;r_{\zeta_{1}})$, which implies the identity $f(z)=f(z_0)$, for all $z \in D(z_0;r_{z_0}) \cup D(\zeta_{1};r_{\zeta_{1}})$. Hence, proceeding by induction we conclude that $f(z)=f(z_0)$, for all $z \in D(\zeta_0;r_{\zeta_0})\cup \ldots \cup D(\zeta_m;r_{\zeta_m})$. The proof is complete.
\end{proof}

\

\section{The Anti-Calculus Proposition by P\'{o}lya-Szeg\"{o} and P. Erd\"{o}s and the Theorem on Saddle Points by Bak-Ding-Newman.}

The two results in this section can be found in Bak and Newman ~\cite[pp. ~87--90]{BN} and \cite{BDN}, and also in H. P. Boas \cite{BO}. As proposed problems, they can be seen in P\'{o}lya and Szeg\"{o} ~\cite[Part III, Problems 132, 136, and 144]{POSZ}. The proofs given in this section are different than the respective ones in [1], [2], and [4].

\begin{definition}\label{DEF51} Given a set $X\subset \mathbb C$ and a function $f:X\to \mathbb C$, we say that $f$ is \textit{ analytic in $X$} if there exists an open set containing $X$ in which some extension of $f$ is analytic.
\end{definition}

\begin{theorem}\label{TEO52} ({\sf Anti-Calculus Proposition (P\'{o}lya-Szeg\"{o}, P. Erd\"{o}s)}) Let us suppose that
$f:\overline{D}(0;R)\to \mathbb C$ is analytic and non-constant and $\alpha$, where $|\alpha|=R$, is a point of maximum of $|f|$ or a point of minimum of $|f|$. 
\begin{itemize}
\item[(a)] If $\alpha$ is a point of maximum, then $f'(\alpha)\neq 0$. 
\item[(b)] If $\alpha$ is a point of minimum, then $f(\alpha)=0$ or
$f'(\alpha)\neq 0$. 
\end{itemize}
\end{theorem}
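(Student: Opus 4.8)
The plan is to reproduce, at the boundary point $\alpha$, the computation in the proof of Theorem~\ref{TEO41}, the one genuinely new feature being that near a boundary point only a \emph{half}-disc worth of increments $z-\alpha$ remains inside $\overline{D}(0;R)$. First I would normalize: composing $f$ with the rotation $z\mapsto(\alpha/R)z$ changes neither analyticity, nor $|f|$, nor the vanishing of the derivative at the relevant point, so we may assume $\alpha=R>0$. Next I dispose of the degenerate possibilities. In part~(a), if $f(R)=0$ then $|f|$ has a zero which is also its maximum over $\overline{D}(0;R)$, so $f$ vanishes on $D(0;R)$; since $f$ extends analytically to an open connected set containing $\overline{D}(0;R)$ (pass to the connected component containing $\overline{D}(0;R)$) and is non-constant, this contradicts Proposition~\ref{PROP23}. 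Hence in (a) necessarily $f(R)\neq 0$. In part~(b), if $f(R)=0$ the assertion holds. So in both parts it suffices to prove: \emph{if $f(R)\neq 0$ and $R$ is a point of maximum or of minimum of $|f|$ over $\overline{D}(0;R)$, then $f'(R)\neq 0$.}

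Suppose, for contradiction, that $f'(R)=0$. Since $f$ is analytic and non-constant, $f-f(R)$ is analytic, non-constant, and vanishes at $R$, so Proposition~\ref{PROP23} yields a least integer $k\ge 1$ and an analytic $\varphi$ with $f(z)=f(R)+(z-R)^k\varphi(z)$ and $\varphi(R)\neq 0$; moreover $f'(R)=0$ forces $k\ge 2$. Putting $z=R+\rho e^{i\phi}$ with $\rho>0$ small, the same computation that gives (\ref{4.1.3}) yields
\[
|f(R+\rho e^{i\phi})|^2-|f(R)|^2 = 2\rho^{k}\,\mathrm{Re}\big[\overline{f(R)}\,e^{ik\phi}\varphi(R+\rho e^{i\phi})\big] + \rho^{2k}\,|\varphi(R+\rho e^{i\phi})|^2 ,
\]
and since $\varphi(R+\rho e^{i\phi})\to\varphi(R)$ uniformly in $\phi$ as $\rho\to 0^{+}$, the right-hand side equals $2\rho^{k}\,\mathrm{Re}\big[\overline{f(R)}\,\varphi(R)\,e^{ik\phi}\big]+O(\rho^{k+1})$. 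Writing $\overline{f(R)}\,\varphi(R)=\lambda e^{i\beta}$ with $\lambda=|f(R)|\,|\varphi(R)|>0$, the leading term is $2\lambda\rho^{k}\cos(k\phi+\beta)$.

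The geometric input is that $z=R+\rho e^{i\phi}$ lies in $D(0;R)$ for all sufficiently small $\rho>0$ exactly when $\cos\phi<0$, because $|R+\rho e^{i\phi}|^2=R^2+\rho(2R\cos\phi+\rho)$; thus the admissible directions form the open interval $\phi\in(\pi/2,3\pi/2)$, of length $\pi$. Hence $k\phi+\beta$ runs over an open interval of length $k\pi\ge 2\pi$; being an open interval longer than $\pi$, it is contained in neither $\{\cos\ge 0\}$ nor $\{\cos\le 0\}$ (neither of those sets contains an open interval of length exceeding $\pi$), so $\cos(k\phi+\beta)$ assumes a strictly positive and a strictly negative value there. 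Choose $\phi_{+},\phi_{-}\in(\pi/2,3\pi/2)$ realizing these two signs. For $\rho>0$ small enough the displayed quantity has the sign of its leading term, so $|f(z)|^2-|f(R)|^2$ is $>0$ at $z=R+\rho e^{i\phi_{+}}$ and $<0$ at $z=R+\rho e^{i\phi_{-}}$, both points of $D(0;R)\subset\overline{D}(0;R)$ arbitrarily close to $R$. This contradicts $R$ being a point of maximum or of minimum of $|f|$ over $\overline{D}(0;R)$, proving (a) and (b).

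The substantive point — and the only place the hypothesis $f'(\alpha)=0$ enters — is the inequality $k\ge 2$: it is what forces the arc swept by $k\phi+\beta$ to have length $\ge 2\pi$ rather than merely $\pi$, and an arc of length exactly $\pi$ (the case $k=1$, i.e.\ $f'(\alpha)\neq 0$) can lie entirely inside $\{\cos\le 0\}$, so the conclusion genuinely would fail there. The remaining difficulties are only bookkeeping: the uniformity in $\phi$ of the $O(\rho^{k+1})$ error (immediate from the boundedness of $\varphi$ on a small closed disc about $R$), and the borderline subcase $k=2$, where the swept arc has length exactly $2\pi$ and one must observe that an open interval of length $2\pi$ still meets both $\{\cos>0\}$ and $\{\cos<0\}$.
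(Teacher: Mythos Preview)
Your proof is correct and follows essentially the same approach as the paper's: assume $f'(\alpha)=0$, factor $f(z)-f(\alpha)=(z-\alpha)^k\varphi(z)$ with $k\ge 2$, expand $|f|^2-|f(\alpha)|^2$ along rays $z=\alpha+\rho e^{i\phi}$ into the half-disk of admissible directions, and use that $k\ge 2$ makes $\mathrm{Re}[\overline{f(\alpha)}\varphi(\alpha)e^{ik\phi}]$ change sign on an interval of length $\pi$. The differences are cosmetic: you normalize $\alpha=R$ at the outset and dispose of the case $f(\alpha)=0$ first, and you spell out explicitly (via the arc-length $k\pi\ge 2\pi$ argument) the step the paper leaves implicit when it asserts that constant sign on $(\theta_0,\theta_0+\pi)$ with $k\ge 2$ forces $\overline{f(\alpha)}\varphi(\alpha)=0$.
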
  

\begin{proof} Let us suppose that $f'(\alpha)=0$. Since $\alpha$ is either a point of maximum or a point of minimum of $|f|$ in $\overline{D}(0;R)$ and $f$ is
non-constant, from the Taylor series of $f$ centered at $\alpha$ we may conclude that there exists a function $\varphi$ analytic inside $D(\alpha;r)$, with $r>0$ and sufficiently small, and $k\geq 2$ satisfying
\begin{displaymath}
(5.1) \ \ \ \ \ \  \ \left\{\begin{array}{ll}
f(z) = f(\alpha) + (z-\alpha)^k\varphi(z), \ \ \varphi(\alpha)\neq 0,\ \ 
\ \textrm{for all}\ z \in D(\alpha;r), \\ 
\\
|f(z)|^2 - |f(\alpha)|^2 \,\ \ \    \textrm{has the same sign for all} \
z \in D(\alpha;r)\cap D(0;R) . 
\end{array}
\right.\ \ \ \ \ \ \ \ \ \ \ \ \ \ \ \ \ 
\end{displaymath}
Substituting the first equation of (5.1) for $z=\alpha +\epsilon e^{i\theta}$, where the real numbers $\epsilon$ and $\theta $ are such that $z\in D(\alpha;r)\cap D(0;R)$, into the second expression of (5.1) we obtain the expression 
\[(5.2)\ \ \ \ \ \  \ \ \ \ \ \ \ \ \ 
2\epsilon^k\textrm{Re}\big[\,\overline{f(\alpha)}e^{ik\theta}\varphi(\alpha
+\epsilon e^{i\theta})\,\big] + \epsilon^{2k}\,|\varphi(\alpha + \epsilon
e^{i\theta})|^2,\ \ \ \ \ \ \ \ \ \ \ \ \ \ \ \ \ \ \ \ \ \ \ \ \ \ \ \ \]
whose sign is the same for all such values of  $\epsilon$ and $\theta$. The possible values of $\theta$ include some interval $(\theta_0, \theta_0 + \pi)$. Given one such $\theta$, there
exists $r_\theta>0$ such that $z=\alpha +\epsilon e^{i\theta} \in D(0;R)$ if $\epsilon$ is in $(0,r_\theta)$. Let us fix one such
$\theta$. Taking the limit as $ \epsilon \to 0^{+}$ of (5.2) divided by
$\epsilon^k$ we obtain the expression 
\[ 2\textrm{Re}\big[\,\overline{f(\alpha)}\varphi(\alpha)e^{ik\theta}\,\big]
,\ \ \textrm{with the same sign for all}\    \theta \in (\theta_0,\theta_0
+\pi).\] 
However, since $k\geq 2$, this can only be true if
$\overline{f(\alpha)}\varphi(\alpha)=0$, which implies $f(\alpha)=0$.  

Now we can complete the proof of this theorem.

\begin{itemize}
\item[(a)] If $\alpha$ is a point of maximum, then we deduce that $f$ is the zero function, against the hypothesis. Thus, $f'(\alpha)\neq 0$. 
\item[(b)] If $\alpha$ is a point of minimum then we proved $f(\alpha)=0$, as we intended.
\end{itemize}
\end{proof}

\begin{definition}\label{DEF53} Given a real-differentiable function $F:\Omega\to \mathbb R$, with $\Omega$ an open subset of $\mathbb R^2$, a point $P_0\in\Omega$ is a \textit{ saddle point} of $F$ if $P_0$ is a critical point of $F$ (i.e., the partial derivatives of first order $F_x(P_0)$ and $F_y(P_0)$ are both zero) but $P_0$ is not a local extremum of $F$ (i.e., $P_0$ is neither a point of local maximum of $F$ nor a point of local minimum of $F$). 
\end{definition}

Let us consider an analytic function $f:\Omega \to \mathbb C$, where $\Omega$ is an open subset of  $\mathbb C$. By identifying $\Omega$ as the subset $\{(x,y) \in\mathbb R^2: z= x+iy\in \Omega\}$ contained in $\mathbb R^2$, we consider in $\mathbb R^3$ the graph of the function $|f|:\Omega \to \mathbb R$,   
\[\textrm{Graph}(|f|)=\Big\{\,(x\,,y\,,|f(z)|\,)\, \in \mathbb R^3 :\ z=x +iy\in \Omega \Big\}.\]

\begin{definition}\label{DEF54} The set Graph$(|f|)$ is the 
\textit{ analytic landscape} of $f$ (see Busam and Freitag \cite[p. 64]{BF}, Jensen \cite{JEN}, and  P\'{o}lya-Szeg\"{o}  \cite[Part III, Problems 131 and 132]{POSZ}).  
\end{definition}

\begin{theorem}\label{TEO55} ({\sf Bak-Ding-Newman}) Let $f$ be a non-constant analytic
function defined in $\Omega$ and $z_0=x_0+iy_0\in \Omega$, with $x_0$ and
$y_0$ real numbers. Then, $(x_0,y_0)$ is a saddle point of $F=|f|:\Omega \to \mathbb R$ if and only if 
$f'(z_0)=0$ and $f(z_0)\neq 0$. 
\end{theorem}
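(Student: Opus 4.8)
The plan is to prove the two implications separately, in each case reducing everything to the already-established Minimum/Maximum Modulus Principles and the Anti-Calculus Proposition, together with an elementary computation of the gradient of $F=|f|$ in terms of $f'$.

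First I would set up the computational core. Writing $f=u+iv$ with $u,v$ real-differentiable (since $f$ is analytic, hence infinitely differentiable), we have $F^2=u^2+v^2$, so at any point where $F\neq 0$, $F$ is real-differentiable and $\nabla F = (uu_x+vv_x, uu_y+vv_y)/F$. Using the Cauchy--Riemann equations $u_x=v_y$, $u_y=-v_x$ (which follow from complex differentiability) and the identity $f'=u_x+iv_x$, a short computation gives $F_x+iF_y$ proportional to $\overline{f}\,f'/F$ — concretely, $|f|$'s gradient vanishes at $z_0$ (with $f(z_0)\neq 0$) if and only if $f'(z_0)=0$. The case $f(z_0)=0$ must be handled separately: there $F$ has a (global, local) minimum value $0$, so $z_0$ is a point of local minimum of $F$, hence \emph{not} a saddle point, which is exactly what the biconditional predicts on that side.

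For the forward direction, suppose $(x_0,y_0)$ is a saddle point of $F$. By the definition of saddle point it is a critical point but not a local extremum. If $f(z_0)=0$ then, as just noted, $z_0$ would be a local minimum of $F$, a contradiction; hence $f(z_0)\neq 0$. Then $F$ is real-differentiable near $z_0$, and criticality of $F$ at $z_0$ together with the gradient formula forces $f'(z_0)=0$. So $f'(z_0)=0$ and $f(z_0)\neq 0$, as required. For the converse, assume $f'(z_0)=0$ and $f(z_0)\neq 0$. The gradient formula gives $F_x(z_0)=F_y(z_0)=0$, so $z_0$ is a critical point of $F$. It remains to show $z_0$ is not a local extremum of $F=|f|$: by the Maximum Modulus Principle (Theorem~\ref{TEO41}(a)), since $f$ is non-constant, $|f|$ has no local maximum at all; and by the Minimum Modulus Principle (Theorem~\ref{TEO41}(b)), $|f|$ can have a local minimum at $z_0$ only if $f(z_0)=0$, which is excluded. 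Hence $z_0$ is a critical point of $F$ that is neither a local max nor a local min, i.e.\ a saddle point.

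The main obstacle is the gradient computation: one must verify cleanly, without circularity, that for an analytic $f$ with $f(z_0)\neq 0$ the real gradient of $|f|$ at $z_0$ vanishes exactly when $f'(z_0)=0$. This is where the Cauchy--Riemann relations enter, and one should be slightly careful that $F$ is genuinely real-differentiable where it is nonzero (it is, being the square root of the nowhere-vanishing smooth function $u^2+v^2$). Everything else is a matter of invoking Theorem~\ref{TEO41} and the definition of saddle point; alternatively, one can bypass part of the non-extremum argument by quoting Theorem~\ref{TEO52} (the Anti-Calculus Proposition) restricted to small closed disks around $z_0$, but the direct appeal to Theorem~\ref{TEO41} is cleaner here since $z_0$ is an interior point.
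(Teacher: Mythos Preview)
Your argument is correct, but it follows a genuinely different route from the paper's. You compute the gradient of $|f|$ via the Cauchy--Riemann equations and then dispose of the extremum question by a direct appeal to Theorem~\ref{TEO41}. The paper, by contrast, never invokes Cauchy--Riemann: in the forward direction it writes $f(z)=f(0)+z\psi(z)$ with $\psi(0)=f'(0)$, expands $\bigl(|f(re^{i\theta})|-|f(0)|\bigr)/r$ and passes to the limit $r\to 0^+$ to obtain $\mathrm{Re}\bigl[\overline{f(0)}f'(0)e^{i\theta}\bigr]=0$ for every $\theta$, hence $f'(0)=0$; in the converse direction it uses the factorization $f(z)=f(0)+z^k\varphi(z)$ with $k\ge 2$, shows by the same limit trick that $|f|$ is real-differentiable with zero gradient at the origin, and then reads off from the sign of $2\,\mathrm{Re}\bigl[\overline{f(0)}\varphi(0)e^{ik\theta}\bigr]$ that $|f|$ changes sign around $|f(0)|$ along different rays, so the origin is not a local extremum.

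Your approach is shorter and leverages Theorem~\ref{TEO41} very cleanly, but it imports the Cauchy--Riemann relations, which the paper is at pains to avoid in keeping with its Weierstrassian, power-series-only philosophy. The paper's proof is more self-contained in that sense and also yields, as a by-product of the $(\Leftarrow)$ computation, an explicit description of the directions along which $|f|$ increases or decreases near the saddle; on the other hand it re-derives by hand a non-extremum conclusion that, as you observe, is already packaged inside Theorem~\ref{TEO41}.
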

\begin{proof} Let us keep the notation in Theorem \ref{TEO52} (the anti-calculus proposition). We may assume without loss of generality that $z_0=0$.
\begin{itemize}
\item[($\Rightarrow$)] It is clear that $f(0)\neq 0$. Let us consider $\psi$
 analytic in $\Omega$ such that
\[f(z) = f(0)+ z\psi(z), \ \psi(0)=f'(0).\]
At the origin, $|f|$ is real-differentiable and has null gradient. Then,
fixing an arbitrary angle $\theta \in \mathbb R$ and taking the limit for
$r\to 0^{+}$ we obtain 
\[\ \ (5.3)\ \ \ 0  =\lim\limits_{r\to 0^{+}}\frac{|f(re^{i\theta})| -|f(0)|}{|re^{i\theta}|}\ \   \ \ \ \ \ \ \ \ \ \ \ \ \ \ \ \ \ \ \ \ \ \ \ \ \ \ \ \ \ \ \ \ \ \ \ \ \ \ \ \ \ \ \ \ \ \ \]
\[\ \ \ \ \ \ \ \ \ \ \ \ \ \ \ \ = \lim\limits_{r\to 0^{+}}\frac{1}{|f(re^{i\theta})|+|f(0)|}\,\Big\{2\textrm{Re}\left[\,\overline{f(0)}
\psi(re^{i\theta})e^{i\theta}\,\right] \ +\ r|\psi(re^{i\theta})|^2\Big\}.\]
On the other hand, for fixed $\theta$, if $r\to 0^{+}$ then we have 
$f(re^{i\theta})\to 0$ and, in addition, $\psi(re^{i\theta})\to f'(0)$. Substituting these
limits into (5.3) we obtain 
\[2\textrm{Re}[\overline{f(0)}f'(0)e^{i\theta}]=0 .\]
Since $\theta$ is arbitrary in $\mathbb R$, we finally obtain 
$\overline{f(0)}f'(0)=0$ and thus $f'(0)=0$. 
\item[($\Leftarrow$)] Thanks to the hypothesis, we know that there exists a function $\varphi$ analytic in $\Omega$ which satisfies the equation
$f(z)=f(0)+z^k\varphi(z)$, for all $z\in \Omega$, where $k\geq 2$ and $\varphi(0)\neq 0$. Hence, given $z\neq 0$ we can write 
\[\ \ \ \ \ \frac{|f(z)| -|f(0)|}{|z|}=\frac{|f(z)|^2
-|f(0)|^2}{|z|\,(\,|f(z)|+|f(0)|\,)}=
\frac{2\textrm{Re}[\,\overline{f(0)}\varphi(z)z^k\,]\,+\,
|\varphi(z)|^2\,|z|^{2k}}{|z|\,(\,|f(z)|+|f(0)|\,)} .\] 

Consequently, we have
\[\lim_{z \to 0}\frac{|f(z)|
-|f(0)|}{|z|}=\ \ \ \ \ \ \ \ \ \ \ \ \ \ \ \ \ \ \ \ \ \ \ \ \ \ \ \ \ \ \ \ \ \ \ \ \ \ \ \ \ \ \ \ \ \  \ \ \ \ \ \ \ \ \ \ \ \ \ \ \   \]
\[\ \ \ \ \ \ \ \ \ \ \ \ \ \ \ \ \ \ \ =\lim_{z \to 0}\frac{1}{|f(z)|+|f(0)|}\,\left\{2\textrm{Re}\left[\,
\overline{f(0)}\varphi(z)z^{k-1}\frac{z}{|z|}\,\right]
\ +\ |\varphi(z)|^2|z|^{2k-1}\right\}=0 .\] 
Therefore, at the origin the function $|f|$ is real-differentiable and its partial derivatives of first order vanish. 

Furthermore, for any fixed angle $\theta$, the sign of
$|f(re^{i\theta})|^2-|f(0)|^2$ is that of the expression
$2\textrm{Re}\big[\overline{f(0)}\varphi(re^{i\theta})e^{ik\theta}\big] \
+\ r^k|\varphi(re^{i\theta})|^2$ which is, considering small values of $r>0$, the sign of $2\textrm{Re}[\overline{f(0)}\varphi(0)e^{ik\theta}]$, if this
number is not zero. However, since $\overline{f(0)}\varphi(0)\neq 0$, this
sign can in fact change, depending on the chosen $\theta$. Thus, $z=0$ is a
saddle point. 
\end{itemize}
\end{proof}
\

\section{The Polygonal Mean-Value Property for Polynomials} 

The next result is just a part of a theorem due to S. Kakutani and M. Nagamo
\cite{KN} and J. L. Walsh \cite{WA} (see also S. Haruki \cite{SH}) characterizing the functions having the property described in Definition \ref{DEF61} as polynomials.

\begin{definition}\label{DEF61} A function $f(z)$ possesses the 
\textit{ polygonal mean-value property} if there exists $N\in \mathbb N$ such that for any $z_0 \in \mathbb C$ the value $f(z_0)$ is the average of $f$ at the $N$ vertices of every regular polygon with $N$ sides centered at $z_0$. 
\end{definition}

\begin{theorem}\label{TEO62} ({\sf Kakutani-Nagamo, Walsh}) Let $P(z)=\sum_{j=0}^na_jz^j$,
$n\geq 1$, be a complex polynomial and $\omega=e^{i\pi/n}$ (thus,
$\omega^n=-1$). Given arbitrary complex numbers $z_0$ and $z$, we have 
\[ P(z_0) =
\frac{1}{2n}\sum_{k=0}^{2n -1}P(z_0 + z\omega^k)  . \] 
\end{theorem}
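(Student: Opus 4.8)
The plan is to expand the right-hand side directly using the monomial structure of $P$ and exploit a vanishing geometric sum, exactly in the spirit of the proof of Lemma \ref{LEM31}. Writing $P(z_0+z\omega^k)=\sum_{j=0}^n a_j (z_0+z\omega^k)^j$ and summing over $k=0,\ldots,2n-1$, I would interchange the two finite sums to get $\sum_{k=0}^{2n-1}P(z_0+z\omega^k)=\sum_{j=0}^n a_j \sum_{k=0}^{2n-1}(z_0+z\omega^k)^j$. The point is to show that the inner sum over $k$ picks out only the term that does not involve $\omega^k$: by the binomial theorem, $(z_0+z\omega^k)^j=\sum_{m=0}^j \binom{j}{m} z_0^{\,j-m} z^m \omega^{km}$, so $\sum_{k=0}^{2n-1}(z_0+z\omega^k)^j=\sum_{m=0}^j \binom{j}{m} z_0^{\,j-m} z^m \sum_{k=0}^{2n-1}\omega^{km}$.

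The key computation is the geometric sum $\sum_{k=0}^{2n-1}\omega^{km}$ for $0\le m\le j\le n$. When $m=0$ this equals $2n$. When $1\le m\le n$, since $\omega=e^{i\pi/n}$ is a primitive $2n$-th root of unity, $\omega^m\ne 1$, and $\sum_{k=0}^{2n-1}\omega^{km}=\frac{1-\omega^{2nm}}{1-\omega^m}=\frac{1-(\omega^{2n})^m}{1-\omega^m}=0$ because $\omega^{2n}=1$. This is precisely the same geometric-sum vanishing used in Lemma \ref{LEM31}. Hence only the $m=0$ term survives: $\sum_{k=0}^{2n-1}(z_0+z\omega^k)^j=2n\,z_0^{\,j}$ for every $j\in\{0,\ldots,n\}$.

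Combining, $\sum_{k=0}^{2n-1}P(z_0+z\omega^k)=\sum_{j=0}^n a_j\cdot 2n\,z_0^{\,j}=2n\,P(z_0)$, and dividing by $2n$ yields the claimed identity. There is really no serious obstacle here; the only point requiring minimal care is the bound $m\le n$, which guarantees $\omega^m\ne 1$ so that the geometric sum genuinely vanishes (this is why the degree-$n$ polynomial is paired with the $2n$-th root $\omega=e^{i\pi/n}$, giving one extra factor of $2$ of slack). I would present this as a short direct computation, noting the parallel with the proof of Lemma \ref{LEM31}, and perhaps remark that the identity says $P(z_0)$ is the average of $P$ over the vertices of any regular $2n$-gon centered at $z_0$, so $P$ has the polygonal mean-value property with $N=2n$.
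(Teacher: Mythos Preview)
Your proof is correct and uses essentially the same approach as the paper: both arguments rest on the geometric-sum identity $\sum_{k=0}^{2n-1}\omega^{km}=0$ for $1\le m\le n$. The only difference is organizational---the paper first reduces to the case $z_0=0$ via the substitution $Q(z)=P(z_0+z)$ (so that $P(z_0+z\omega^k)=Q(z\omega^k)=\sum a_j' z^j\omega^{kj}$ and no binomial expansion is needed), whereas you treat general $z_0$ directly with the binomial theorem; both are short and equally valid.
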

\begin{proof} Supposing initially $z_0=0$, we consider the $2n$ polynomials \[P_k(z) = P(z\omega^k)=a_0 +a_1z\omega^k + \cdots +a_nz^n\omega^{kn},\ \textrm{where}\ 0\leq
k\leq 2n-1.\] 
Clearly, we have
\begin{displaymath}
\sum_{k=0}^{2n-1}\omega^{kj} = \left\{\begin{array}{ll}
2n,  & \textrm{if}\ j=0\\
\frac{1 -\omega^{2nj}}{1 -\omega^j}=0, & \textrm{if} \ 1\leq j\leq n.
\end{array}
\right.
\end{displaymath} 
Hence, $\sum_{k=0}^{2n-1}P_k(z) = 2na_0=2nP(0)$. Thus, defining
$Q(z)=P(z_0+z)$ we obtain
\[2nP(z_0)=2nQ(0)=\sum\limits_{k=0}^{2n-1}Q_k(z)=\sum\limits_{k=0}^{2n-1}Q(z\omega^k)
=\sum\limits_{k=0}^{2n-1}P(z_0+z\omega^k).\]  
\end{proof}

\begin{remark} Since the enunciation of Theorem \ref{TEO62} does not specify the degree of the polynomial $P(z)=\sum_{j=0}^na_jz^j$, we may conclude that its claim is true for every $n\in \{1,2,3,\ldots\}$ such that $n\geq$ degree$(P)$.
\end{remark}

As a consequence of Theorem \ref{TEO62}, we provide a ``high school proof'' of the Maximum Modulus Principle for Polynomials that does not employ polynomial continuity (see Remark \ref{REM39} and the penultimate paragraph of the Introduction).

\begin{theorem}\label{TEO64}({\sf Maximum Modulus Principle for Polynomials}) Let $P(z) =\sum_{j=0}^na_jz^j$, where $n\geq 1$, be a complex polynomial and $z_0$ a point of local maximum of $|P|$. Then, $P$ is a constant. 
\end{theorem}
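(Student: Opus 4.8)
The plan is to combine Theorem \ref{TEO62} (the polygonal mean-value property for polynomials) with the triangle inequality, in the spirit of the proof of Lemma \ref{LEM31} and Remark \ref{REM39}, to force all but the constant coefficient of the Taylor expansion of $P$ around $z_0$ to vanish; then a purely algebraic fact (a polynomial of degree $\ge 1$ has only finitely many zeros) finishes the argument without ever invoking continuity.

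First I would reduce to $z_0 = 0$ by replacing $P(z)$ with $Q(z) = P(z_0 + z)$, which is again a polynomial of the same degree $n$, and writing $Q(z) = \sum_{j=0}^n b_j z^j$ with $b_0 = Q(0) = P(z_0)$. The hypothesis that $z_0$ is a point of local maximum of $|P|$ translates to: there exists $\epsilon > 0$ with $|Q(z)| \le |b_0|$ for all $|z| \le \epsilon$. Next, for $r$ with $0 < r < \epsilon$, I would apply Theorem \ref{TEO62} with $z = r$ (so the $2n$ vertices $r\omega^k$ all lie in $\overline{D}(0;\epsilon)$): this gives $b_0 = Q(0) = \frac{1}{2n}\sum_{k=0}^{2n-1} Q(r\omega^k)$, hence $|b_0| \le \frac{1}{2n}\sum_{k=0}^{2n-1} |Q(r\omega^k)| \le |b_0|$. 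Equality throughout forces $|Q(r\omega^k)| = |b_0|$ for every $k$ and every such $r$ — but I actually want more, namely that $Q(r\omega^k) = b_0$ for each $k$. To get this, I would note that equality in the triangle inequality $|\sum w_k| \le \sum |w_k|$ with $\sum w_k = 2n b_0$ and each $|w_k| \le |b_0|$ (so $|\sum w_k| = \sum|b_0| \ge \sum |w_k|$) forces each $w_k = b_0$; thus $Q(r\omega^k) = b_0$ for all $0 \le k \le 2n-1$ and all $0 < r < \epsilon$.

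Now the polynomial $Q(z) - b_0 = \sum_{j=1}^n b_j z^j$ vanishes at the infinitely many points $\{r\omega^k : 0 < r < \epsilon,\ 0 \le k \le 2n-1\}$ (for instance, it already vanishes at infinitely many points of the single ray $\{r : 0 < r < \epsilon\}$). By the elementary algebraic fact that a complex polynomial of degree $\ge 1$ has at most that many zeros, $Q(z) - b_0$ must be the zero polynomial, i.e. $b_1 = \dots = b_n = 0$. Therefore $Q$, and hence $P$, is the constant $b_0 = P(z_0)$. Note that this argument uses only the algebraic identity of Theorem \ref{TEO62}, the triangle inequality, and the bound on the number of zeros of a polynomial — in particular it never appeals to the continuity of $P$, in keeping with the remark preceding the statement.

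I expect the only delicate point to be the step extracting $Q(r\omega^k) = b_0$ (rather than merely $|Q(r\omega^k)| = |b_0|$) from the chain of inequalities; this is the equality case of the triangle inequality and should be dispatched in one line once phrased as above. Everything else is routine: the reduction to $z_0 = 0$, the application of Theorem \ref{TEO62}, and the final zero-counting are all mechanical. An alternative finish, fully parallel to Remark \ref{REM39}, would be to adapt the proof of Theorem \ref{TEO38} or \ref{TEO64}'s polynomial analogue using Lemma \ref{LEM31} directly; but routing through Theorem \ref{TEO62} makes this a genuine corollary of the polygonal mean-value property, which is presumably the author's intent given its placement.
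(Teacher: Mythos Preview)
Your proof is correct, and it follows the paper's intended route through Theorem \ref{TEO62} up to the chain of inequalities
\[
|b_0| \;=\; \Big|\tfrac{1}{2n}\sum_{k=0}^{2n-1} Q(r\omega^k)\Big| \;\le\; \tfrac{1}{2n}\sum_{k=0}^{2n-1} |Q(r\omega^k)| \;\le\; |b_0|.
\]
From this point on, however, your argument is genuinely different from (and cleaner than) the paper's. The paper only extracts the weaker conclusion $|P(z)|=|a_0|$ for all $z$ in the disk; it then substitutes real $z=x\in[0,R]$, expands $|P(x)|^2$ as a real polynomial in $x$, and argues that the coefficient of the highest power $x^{2n}$ (namely $|a_n|^2$) must vanish, iterating this $n$ times to kill $a_n,a_{n-1},\ldots,a_1$ one at a time. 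You instead squeeze out the \emph{stronger} conclusion $Q(r\omega^k)=b_0$ by invoking the equality case of the triangle inequality (together with $|w_k|\le|b_0|$ and $\sum w_k = 2n b_0$), which immediately hands you infinitely many zeros of $Q-b_0$ and finishes in one stroke.

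Both proofs are integration-free, continuity-free, and rest on the same two ingredients (Theorem \ref{TEO62} and the finite-zeros fact); your version simply exploits the mean-value identity more fully and avoids the coefficient-by-coefficient descent. The only place worth a word of care --- exactly the one you flagged --- is the equality step: when $b_0\neq 0$, equality in $|\sum w_k|=\sum|w_k|$ with all $|w_k|=|b_0|$ forces the $w_k$ to be equal, and the case $b_0=0$ is trivial since then $|Q|\le 0$ near $0$.
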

\begin{proof} We assume, without losing generality, $z_0=0$. Let $R>0$ be such that $|P(z)|\leq |P(0)|$, for all $ z \in \overline{D}(0;R)$, and let
$\omega=e^{i\pi/n}$ (thus, $\omega^n=-1$). Then, by the polygonal mean-value
property for polynomials (Theorem \ref{TEO62}) we have
\[P(0)=\frac{1}{2n}\sum_{k=0}^{2n-1}P(z\omega^k)\]
and also, since $|P(0)|$ is the maximum value of $|P|$ in $\overline{D}(0;R)$,   
\[|P(0)| \leq  \frac{1}{2n}\Big[|P(z\omega^0)| +\cdots +
|P(z\omega^{2n-1})|\Big] \leq  \frac{1}{2n}\Big[|P(z)|\,+\,
(2n-1)|P(0)|\Big] \leq |P(0)| .\] 
Thus, we obtain $|P(z)|=|P(0)|=|a_0|$ for all $z \in \overline{D}(0;R)$. Hence, given a real number $z=x\in[0,R]$ we deduce that 
\[|a_0|^2= \sum_{j=0}^{n}|a_j|^2x^{2j}  \,+\, 2\sum_{0\leq \mu<\nu\leq
n}\textrm{Re}[\overline{a_\mu}a_\nu]x^{\mu +\nu}\] 
and, cancelling $|a_0|^2$ on each side, noticing that $\mu +\nu<2n$ if $0\leq \mu<\nu\leq n$, and isolating the monomial $|a_n|^2x^{2n}$, which has the biggest exponent, 
\[\left\{\sum\limits_{j=1}^{n-1}|a_j|^2x^{2j}  + 2\sum\limits_{0\leq \mu<\nu\leq
n}\textrm{Re}[\,\overline{a_\mu}a_\nu\,]x^{\mu +\nu}\right\} +  |a_n|^2x^{2n}=0, \
 \textrm{for all}\ x\in [0,R].\] 
Since every complex polynomial with a nonzero coefficient has a finite number of zeros, we deduce that all the coefficients of the polynomial right above are zero. Thus, the coefficient $|a_n|^2$ of the monomial $|a_n|^2x^{2n}$ is zero. Hence, we are allowed to write $P(z)=\sum_{j=0}^{n-1}a_jz^j$. If $n-1=0$, then we have $P(z)=a_0$; thus, the proof is complete. Otherwise,  by repeating the previous argument $(n-1)$-times we conclude that $a_{n-1}=a_{n-2}=\cdots =a_1=0$ and consequently
$P(z)$ is a constant.
\end{proof}

\newpage

\begin{proposition}\label{PROP65} ({\sf Cauchy's Inequalities for $P(z)
=\sum_{j=-n}^{j\,=\,m} a_jz^j$}) Let us consider $P(z) =\sum_{j=-n}^{j\,=\,m} a_jz^j$, where $n$ and $m$ are fixed in $\mathbb N$ and $a_j\in \mathbb C$, for all $j$ such that $-n\,\leq j\leq m$. Let $M(r)=\max\{|P(z)|: |z|=r\}$, where $r>0$. Then,
\[ |a_j|\leq \frac{M(r)}{r^j},   \ \textrm{for all}  \ j \in \{-n, -n+1, \ldots, m\}.\] 
\end{proposition}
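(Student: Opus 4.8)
The plan is to reduce this Laurent-polynomial statement to the ordinary polynomial version of the Gutzmer--Parseval inequality (Lemma \ref{LEM31}) by clearing denominators. Given $P(z)=\sum_{j=-n}^{m}a_jz^j$, I would multiply through by $z^n$ to obtain the honest polynomial
\[
Q(z)=z^n P(z)=\sum_{j=-n}^{m}a_j z^{j+n}=\sum_{i=0}^{n+m}a_{i-n}z^{i},
\]
which has degree at most $n+m$. For $z$ on the circle $|z|=r$ we have $|Q(z)|=r^n|P(z)|$, so $\max_{|z|=r}|Q(z)|=r^n M(r)$.

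Next I would apply Lemma \ref{LEM31} (or, since the degree of $Q$ might be $0$ if $n=m=0$, the trivial case handled directly) to $Q$. Writing the coefficients of $Q$ as $c_i=a_{i-n}$, the upper bound in the Gutzmer--Parseval inequality for polynomials gives, in particular, $|c_i|^2 r^{2i}\leq \big(\max_{|z|=r}|Q(z)|\big)^2=r^{2n}M(r)^2$ for each $i$, hence $|c_i|\leq r^{n-i}M(r)$. Reindexing via $i=j+n$, this reads $|a_j|\leq r^{-j}M(r)=M(r)/r^j$ for every $j\in\{-n,\dots,m\}$, which is exactly the claim.

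I do not anticipate a serious obstacle here: the only care needed is the edge case where $Q$ reduces to a constant (so Lemma \ref{LEM31}, which assumes degree $\geq 1$, does not literally apply), but then $P$ is itself a single monomial $a_0$ and the inequality $|a_0|\le M(r)$ is immediate. One could alternatively invoke Corollary \ref{COR36} (Cauchy's Inequalities) applied to the power series $Q(z)$, which is convergent on all of $\mathbb{C}$, to get $|c_i|\le r^{-i}\max_{|z|=r}|Q(z)|$ directly and uniformly in $i$, thereby sidestepping the degree hypothesis altogether. I would present the argument through $Q=z^nP$ and Corollary \ref{COR36}, since that keeps the proof a one-line reduction.
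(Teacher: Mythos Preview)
Your argument is correct. Multiplying through by $z^n$ to obtain the polynomial $Q(z)=z^nP(z)$, noting that $\max_{|z|=r}|Q|=r^nM(r)$, and then invoking Corollary~\ref{COR36} (or Lemma~\ref{LEM31}) gives $|a_{i-n}|r^i\le r^nM(r)$ for every $i$, which is exactly the claim after reindexing. Your handling of the degenerate degree-zero case is also fine.

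The paper takes a somewhat different route. Rather than converting $P$ into an ordinary polynomial and citing the earlier result, it reproves the averaging identity directly in the Laurent setting: with $\omega=e^{i\pi/n}$ (after padding so that $m=n$) it computes $\sum_{k=0}^{2n-1}P(z\omega^k)=2na_0$, which immediately yields $|a_0|\le M(r)$; then for a general index $j$ it writes $P(z)=z^jQ(z)$ with $Q$ another Laurent polynomial whose constant term is $a_j$ and whose maximum on $|z|=r$ is $M(r)/r^j$, and applies the constant-term case to $Q$. Your approach is shorter and more modular, since it performs a single reduction and leans on a result already proved; the paper's approach is more self-contained, rederiving the discrete mean-value cancellation for Laurent sums rather than appealing to Lemma~\ref{LEM31} or Corollary~\ref{COR36}. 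Either argument fits naturally into the paper's integration-free framework.
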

\begin{proof} Introducing null coefficients $a_{j's}$ if necessary, we can
suppose that $m=n$. Set $\omega=e^{i\pi/n}$. A short computation reveals that
\[P_k(z) = P(z\omega^k)=
\frac{a_{-n}}{z^n}\omega^{-kn}+ \cdots +a_0+ \cdots+a_nz^n\omega^{kn} ,\ 
\textrm{if}\  0\leq k\leq 2n -1,\]  
and 
\begin{displaymath}
\sum\limits_{k=0}^{2n-1}\omega^{kj} = \left\{\begin{array}{ll}
2n, & \textrm{if} \ j=0\\
\frac{1 - \omega^{2nj}}{1 -\omega^j} = 0, & \textrm{if} \ -n\leq j\leq n\
\textrm{and}\ j\neq 0. 
\end{array}
\right.
\end{displaymath}
Hence, since $\max\{|P_k(z)|: |z|=r\}=\max\{|P(z)|: |z|=r\}=M(r)$, for 
$|z|=r$ we obtain
\[2n|a_0| = \left|\sum\limits_{k=0}^{2n-1}P_k(z)\right| \leq   2nM(r)
,\] 
and then for the constant term $a_0$ we deduce the inequality $|a_0|\leq M(r)$. 

For $j\in \mathbb N$ such that $-n\leq j\leq n$ we have $P(z)=z^jQ(z)$, with
$Q(z)=a_{-n}z^{-n-j}+ \cdots +a_j+ \cdots +a_nz^{n-j}$ and  $\max\{|Q(z)|:
|z|=r\}=M(r)/r^j$. Thus, by the previous case we conclude that
\[|a_j|\leq \max_{|z|=r}|Q(z)|=\frac{M(r)}{r^j}. \]
\end{proof}

The following result is a discrete version of Cauchy's Integral Formula.

\begin{proposition}\label{PROP66} Let $P(z)$ be a complex polynomial, $n \in \{1,2,3, \ldots\}$ such that $n\geq$ degree$(P)$, and $\omega =e^{i\pi/n}$ (thus, $\omega^n=-1$). Then, given arbitrary complex numbers $z_0$ and $z$, with $z\neq 0$, and an arbitrary $j\in \{0,1, \ldots, n\}$ we have  
\[\frac{P^{(j)}(z_0)}{j\,!} =\frac{1}{2n}\, \sum_{k=0}^{2n -1}
\frac{P(\zeta_k)}{(\zeta_k - z_0)^j},  \ \textrm{where} \ \zeta_k= z_0 +
z\omega^k.\] 
\end{proposition}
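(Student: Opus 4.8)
The plan is to substitute the Taylor expansion of $P$ centered at $z_0$ into the right-hand sum and exploit exactly the finite-geometric-sum identity already used in Theorem \ref{TEO62} and Proposition \ref{PROP65}. Since $P$ has degree at most $n$, we may write
\[P(w)=\sum_{i=0}^{n}\frac{P^{(i)}(z_0)}{i!}(w-z_0)^{i},\]
where the coefficients with $i>\textrm{degree}(P)$ are simply zero. Evaluating at $w=\zeta_k=z_0+z\omega^{k}$ gives $\zeta_k-z_0=z\omega^{k}$, so for each $k$,
\[\frac{P(\zeta_k)}{(\zeta_k-z_0)^{j}}=\sum_{i=0}^{n}\frac{P^{(i)}(z_0)}{i!}\,z^{\,i-j}\,\omega^{k(i-j)}.\]

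Next I would sum over $k$ from $0$ to $2n-1$ and interchange the two finite summations:
\[\sum_{k=0}^{2n-1}\frac{P(\zeta_k)}{(\zeta_k-z_0)^{j}}=\sum_{i=0}^{n}\frac{P^{(i)}(z_0)}{i!}\,z^{\,i-j}\sum_{k=0}^{2n-1}\omega^{k(i-j)}.\]
Since $i$ runs over $\{0,\ldots,n\}$ and $j$ is fixed in $\{0,\ldots,n\}$, the exponent $i-j$ stays in $\{-n,\ldots,n\}$. For $i=j$ the inner sum is $2n$; for $i\neq j$ one has $0<|i-j|\leq n<2n$, hence $\omega^{i-j}=e^{i\pi(i-j)/n}\neq 1$, and because $\omega^{2n}=(\omega^{n})^{2}=1$ the geometric sum vanishes:
\[\sum_{k=0}^{2n-1}\omega^{k(i-j)}=\frac{1-\omega^{2n(i-j)}}{1-\omega^{i-j}}=\frac{1-(\omega^{2n})^{i-j}}{1-\omega^{i-j}}=0.\]

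Thus only the term $i=j$ survives and the right-hand side collapses to $2n\cdot\frac{P^{(j)}(z_0)}{j!}$; dividing by $2n$ gives the claimed identity. I do not expect a genuine obstacle here: the only point demanding a moment's care is verifying $\omega^{i-j}\neq 1$ over the whole relevant range, which is immediate from $|i-j|\leq n$ together with $\omega^{n}=-1$. (The hypothesis $z\neq 0$ is needed only so that the fractions $P(\zeta_k)/(\zeta_k-z_0)^{j}$ are meaningful when $j\geq 1$, since then $\zeta_k-z_0=z\omega^{k}\neq 0$.)
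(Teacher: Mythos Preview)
Your proof is correct and is essentially identical to the paper's own argument: both expand $P$ as a Taylor polynomial of degree at most $n$ about $z_0$, divide by $(\zeta_k-z_0)^j=z^j\omega^{kj}$, and then use the finite geometric sum $\sum_{k=0}^{2n-1}\omega^{kl}$ (which equals $2n$ for $l=0$ and $0$ for $-n\le l\le n$, $l\neq 0$) to isolate the $i=j$ term. Your parenthetical remark on the role of the hypothesis $z\neq 0$ is a helpful addition not made explicit in the paper.
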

\begin{proof} Writing $P(\zeta)=P(z_0)+ P'(z_0)(\zeta-z_0) + \cdots +
\frac{P^{(n)}(z_0)}{n!}(\zeta-z_0)^n$ and introducing 
\[\zeta_k = z_0 + z\omega^k,\ \textrm{with}\ z \neq 0 \ \textrm{and}\
0\leq k\leq 2n-1,\] 
we find
\[P(\zeta_k) = P(z_0) + P'(z_0)z\omega^k + \cdots +
\frac{P^{(n)}(z_0)}{n!}z^n\omega^{kn}\] 
and then,
\[\frac{P(\zeta_k)}{(\zeta_k -z_0)^j}\, = \,\frac{P(\zeta_k)}{z^j\omega^{kj}} = P(z_0)z^{-j}\omega^{-kj} + \cdots +\frac{P^{(j)}(z_0)}{j\,!}
+ \cdots +\frac{P^{(n)}(z_0)}{n!}z^{n-j}\omega^{k(n-j)}.\] 
It is trivial to verify the identity $\sum_{k=0}^{2n-1}\omega^{kl}= \frac{1 -\omega^{2nl}}{1-\omega^l}=0$, if $-n \leq l\leq n$ and $l\neq 0$. Moreover, it is obvious that $\sum_{k=0}^{2n-1}\omega^{kl}=2n$, if $l=0$. Apply these with $l$ running over $\{-j, 1 -j,\ldots ,n-j\}$ to get finally 
\[\frac{P^{(j)}(z_0)}{j\,!} =  \frac{1}{2n}
\sum_{k=0}^{2n-1}\frac{P(\zeta_k)}{(\zeta_k-z_0)^j}. \]
\end{proof}

\

\section{The Open Mapping Theorem}

\begin{definition}\label{DEF71} Given $f:\Omega \to \mathbb C$, we say that $f$ is an \textit{open map} if for every open subset $O$ of $\Omega$, the set $f(O)=\{f(z): z \in O\}$ is an open set in the complex plane. 
\end{definition}

\begin{definition}\label{DEF72} Given $A$ and $B$, two subsets of $\mathbb C$, the \textit{distance} between $A$ and $B$ is the non-negative number $d(A;B)=\inf\{|a-b|:\, a \in A \
\textrm{and}\ b \in B\}$. 
\end{definition}

For another power series proof of the Open Mapping Theorem, we refer the reader to Cater ~\cite{CAT}. See also Lang ~\cite[p. 81]{LA}.

\begin{theorem}\label{TEO73} ({\sf Open Mapping Theorem}) Let $f$ be a non-constant analytic function in an open connected set containing $\overline{D}(0;R)$. Then, the image of $f$ contains an open disk centered at $f(0)$. 
\end{theorem}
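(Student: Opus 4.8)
The plan is to reduce immediately to the case $f(0)=0$ by replacing $f$ with $F(z)=f(z)-f(0)$, which is again analytic and non-constant on the same open connected domain $\Omega\supset\overline{D}(0;R)$, and then to exhibit a radius $r$ and a number $\varepsilon>0$ for which $D(0;\varepsilon)\subseteq F\big(D(0;r)\big)$; translating back by $f(0)$ then gives $D(f(0);\varepsilon)\subseteq f(\Omega)$, which is the assertion. The engine of the argument will be the Minimum Modulus Principle [Theorem \ref{TEO41}(b)], not the argument principle or Rouch\'e's theorem, which keeps the proof in the spirit of the paper.

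First I would produce a good circle on which $F$ does not vanish. Since $F$ is non-constant on a connected open set, the Principle of Isolated Zeros (Proposition \ref{PROP23}) gives $r$ with $0<r\le R$ such that $F(z)\neq 0$ for all $z$ with $0<|z|\le r$. Then $m:=\min_{|z|=r}|F(z)|$ is strictly positive, because $|F|$ is continuous and the circle $\{|z|=r\}$ is compact and disjoint from the zero set of $F$. I claim that $D(0;m/2)\subseteq F\big(D(0;r)\big)$, with $\varepsilon=m/2$.

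To prove the claim I would fix $w$ with $|w|<m/2$ and consider $h(z)=F(z)-w$, which is analytic and non-constant on $\Omega$. On the circle $|z|=r$ one has $|h(z)|\ge |F(z)|-|w|\ge m-|w|>m/2$, while $|h(0)|=|w|<m/2$. Hence the continuous function $|h|$ attains its minimum over the compact disk $\overline{D}(0;r)$ at some point $z_1$ which cannot lie on the boundary circle, so $z_1\in D(0;r)$; and since $D(0;r)$ is open in $\mathbb{C}$, $z_1$ is a point of local minimum of $|h|$ on $\Omega$. By Theorem \ref{TEO41}(b) this is impossible unless $h(z_1)=0$, i.e.\ $F(z_1)=w$. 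This establishes the claim, and therefore the image of $F$ contains $D(0;m/2)$; adding back $f(0)$ shows the image of $f$ contains $D(f(0);m/2)$. (Applying this to $f$ restricted to small disks about an arbitrary point of $\Omega$ then yields, as a by-product, that every non-constant analytic function is an open map in the sense of Definition \ref{DEF71}.)

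I do not expect a genuine obstacle here: the whole proof is short. The only place demanding a little attention is the construction of the circle $\{|z|=r\}$ on which $F$ does not vanish, so that $m>0$ — this is exactly the step that uses Proposition \ref{PROP23}. As in Remarks \ref{REM32} and \ref{REM34}, one could replace $\min$ by $\inf$ to avoid invoking continuity even there, but since analytic functions are continuous the stated formulation already suffices.
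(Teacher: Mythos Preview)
Your proof is correct, and its skeleton is exactly that of the paper: use the Principle of Isolated Zeros to find a small circle on which $f-f(0)$ does not vanish, take half the minimum modulus there as the radius of the target disk, and for each point $w$ of that disk locate a minimizer of $|f(\cdot)-w|$ in the open disk, then argue that this minimum must be zero.

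The one genuine difference is in how the last step is carried out. You invoke Theorem~\ref{TEO41}(b) (the Minimum Modulus Principle) as a black box to conclude $h(z_1)=0$. The paper instead unfolds that principle in place: at the interior minimizer $v$ it writes $f(v+z)=f(v)+z^k\varphi(z)$ with $\varphi(0)\neq 0$, expands $|T-f(v+z)|^2-|T-f(v)|^2$, divides by $r^k$, lets $r\to 0^+$, and from the arbitrariness of $\theta$ obtains $[T-f(v)]\overline{\varphi(0)}=0$. This is precisely the computation behind Theorem~\ref{TEO41}, so the two arguments are logically equivalent; yours is more modular and a line or two shorter, while the paper's is self-contained and keeps the Open Mapping Theorem independent of Theorem~\ref{TEO41}, in line with the ``independent direct proofs'' philosophy announced at the start of Section~4.
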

\begin{proof} By the principle of isolated zeros (Proposition \ref{PROP23}), there exists a circle $C(0;r)=\{z:|z|=r\}$, with $r>0$, contained in $D(0;R)$ such that $f(0)\notin
f\big(C(0;r)\big)$. Then, we consider the strictly positive distance 
\[\delta = d\Big(f(0);\,f\big(C(0;r)\big)\Big).\]
Let us verify the inclusion $D\big(f(0);\delta/2\big) \subset f\big(D(0;r)\big)$. Given an arbitrary point $T\in D\big(f(0);\delta/2\big)$, there exists a point $v\in \overline{D}(0;r)$
satisfying 
\[|T-f(v)| =  d\Big(T;f\big(\overline{D}(0;r)\big)\Big) .\] 
It is clear that $|T-f(v)|\leq |T-f(0)|<\delta/2$. From the triangle inequality it follows that $|f(v)-f(0)|\leq |f(v)-T| + |T-f(0)|< \delta$. Hence, $f(v)\notin
f\big(C(0;r)\big)$ and we obtain $|v|<r$. Let us consider the radius $\rho=r-|v|>0$.

From above we conclude that there exists $\varphi$ analytic in $D(0;\rho)$, with $\varphi(0)\neq 0$, and $k\in \{1,2,3,\ldots\}$ such that for all $z \in D(0;\rho)$ we have the system
\begin{displaymath}
\left\{\begin{array}{ll}
f(v +z) =f(v) + z^k\varphi(z),\\
|T-f(v+z)|^2\geq |T-f(v)|^2 .
\end{array}
\right.
\end{displaymath}
Substituting the equation in the first line into the inequality in the second line and then expanding the resulting inequality we arrive at 
\[|T-f(v)|^2 -
2\textrm{Re}\Big\{\big[T-f(v)\big]\overline{z^k\varphi(z)}\Big\} \,+\,
|z|^{2k}|\varphi(z)|^2  \geq  |T-f(v)|^2, \ \textrm{if}\ z \in D(0;\rho),  \] 
which implies that, after cancelling the term $|T-f(v)|^2$, then substituting $z=re^{i\theta}$, where $0<r<\rho$ and $\theta \in \mathbb R$, and cancelling $r^k$, 
\[ -
2\textrm{Re}\Big\{\big[T-f(v)\big]\overline{\varphi(re^{i\theta})}e^{-ik\theta}\Big\} 
+ r^k|\varphi(re^{i\theta})|^2 \ \geq \ 0, \ \textrm{if} \ 0<r<\rho \
\textrm{and}\ \theta \in \mathbb R.\] 
Now, fixing the angle $\theta$ and letting $r\to 0^{+}$ we obtain
\[ - 2\textrm{Re}\Big\{\big[T-f(v)\big]\overline{\varphi(0)}e^{-ik\theta}\Big\}  \geq  0.\]
Since $\theta$ is arbitrary, and $k\geq 1$, we deduce 
that $[T-f(v)]\overline{\varphi(0)}=0$. Therefore, $T=f(v)$.
\end{proof}

\begin{remark}\label{REM74} One can easily show that the open mapping theorem implies the maximum and the minimum modulus principles. The famous Carath\'{e}odory's proof of the open mapping theorem for holomorphic functions can be seen in Carath\'{e}odory ~\cite[pp.~139--140]{CAR}, Burckel ~\cite[p.~172]{BU}, and Remmert ~\cite[pp.~256--258]{RR1} 
(see also Bak and Newman ~\cite[pp.~93--94]{BN}). 
\end{remark}

\section{The Inverse Function Theorem and The Local Representation Theorem}
\begin{theorem}\label{TEO81} Let us consider $f\in \mathcal{A}(\Omega)$ and $z_0\in \Omega$ satisfying
$f'(z_0)\neq 0$. Then, there exists $r>0$ such that  
\begin{itemize}
\item[(a)] $f$ is injective in $D(z_0;r)\subset \Omega$. 
\item[(b)] $f\big(D(z_0;r)\big)=V$ is an open set containing $f(z_0)$.
\item[(c)] $\varphi=f|_{D(z_0;r)}:D(z_0;r) \to V$ is invertible and its
inverse is continuous.
\item[(d)] $\varphi^{-1}$ is complex-differentiable.
\end{itemize}
\end{theorem}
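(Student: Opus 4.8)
The plan is to translate so that $z_0=0$ and to work on a disk $D(0;\rho_0)\subset\Omega$ on which $f$ is a convergent power series $f(z)=f(0)+\sum_{n\geq1}a_nz^n$ with $a_1=f'(0)\neq0$; since analytic functions are infinitely differentiable, $f'$ is itself analytic and hence continuous, so after shrinking $\rho_0$ I may also assume $f'$ never vanishes on $D(0;\rho_0)$. The radius $r\leq\rho_0/2$ in the statement will be fixed by the argument for (a), and all four conclusions will follow for that $r$.

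For (a) I would argue by contradiction: suppose $z_1,z_2\in\overline{D}(0;r)$ satisfy $f(z_1)=f(z_2)$ with $z_1\neq z_2$. Factoring $z_1^n-z_2^n=(z_1-z_2)(z_1^{n-1}+z_1^{n-2}z_2+\cdots+z_2^{n-1})$ and using absolute convergence of $\sum n|a_n|\rho^{n-1}$ for $\rho<\rho_0$ (this being the series of $f'$) to divide $f(z_1)-f(z_2)=0$ termwise by $z_1-z_2$, one obtains
\[a_1=-\sum_{n\geq2}a_n\big(z_1^{n-1}+z_1^{n-2}z_2+\cdots+z_2^{n-1}\big),\]
hence $|a_1|\leq\sum_{n\geq2}n|a_n|r^{n-1}\leq r\sum_{n\geq2}n|a_n|(\rho_0/2)^{n-2}$. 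The last sum is a finite constant, so choosing $r$ small enough that $r\sum_{n\geq2}n|a_n|(\rho_0/2)^{n-2}<|a_1|$ forces a contradiction; thus $f$ is injective on $D(0;r)$.

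For (b) and the topological part of (c) I would first show $f(U)$ is open for every open $U\subseteq D(0;r)$: cover $U$ by disks $D(w;s_w)\subset U$; on each such disk $f$ is analytic and non-constant (being a restriction of the injective map $f|_{D(0;r)}$), so the Open Mapping Theorem (Theorem \ref{TEO73}), applied to $z\mapsto f(w+z)$ on $D(0;s_w)$ (which contains $\overline{D}(0;s_w/2)$), gives that $f(D(w;s_w))\subseteq f(U)$ contains an open disk about $f(w)$. Since every point of $f(U)$ is such an $f(w)$, $f(U)$ is open. Taking $U=D(0;r)$ yields that $V=f(D(0;r))$ is open and contains $f(0)$ — this is (b) — and shows $\varphi=f|_{D(0;r)}:D(0;r)\to V$ is an open map; being also a bijection by (a), for every open $U$ the set $(\varphi^{-1})^{-1}(U)=\varphi(U)$ is open, so $\varphi^{-1}$ is continuous, which gives (c).

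For (d), fix $w_0=\varphi(\zeta_0)\in V$ and take $w\in V\setminus\{w_0\}$ with $\zeta=\varphi^{-1}(w)\neq\zeta_0$; then
\[\frac{\varphi^{-1}(w)-\varphi^{-1}(w_0)}{w-w_0}=\left(\frac{f(\zeta)-f(\zeta_0)}{\zeta-\zeta_0}\right)^{-1}.\]
By the continuity of $\varphi^{-1}$ from (c), $\zeta\to\zeta_0$ as $w\to w_0$, so the bracketed quotient tends to $f'(\zeta_0)\neq0$; hence the limit exists and $(\varphi^{-1})'(w_0)=1/f'(\zeta_0)$. The only real obstacle I foresee is bookkeeping: picking a single $r$ that simultaneously validates the injectivity estimate, keeps $\overline{D}(0;r)$ inside the disk of convergence, and keeps it inside the zero-free region of $f'$; and invoking Theorem \ref{TEO73}, stated for a function on an open connected set containing a closed disk, at an arbitrary interior point via translation. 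No analytic input beyond the Open Mapping Theorem and the continuity of $f'$ is needed.
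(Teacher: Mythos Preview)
Your proof is correct and follows essentially the same route as the paper: the same termwise factorization $z_1^n-z_2^n=(z_1-z_2)\sum z_1^{n-1-k}z_2^k$ and tail estimate $\sum_{n\geq2}n|a_n|r^{n-1}$ for injectivity, the Open Mapping Theorem for (b), the observation that a bijective open map has continuous inverse for (c), and the standard Newton-quotient computation for (d). The only cosmetic differences are that the paper records the quantitative lower bound $|f(z)-f(w)|/|z-w|>|f'(0)|/2$ directly rather than arguing by contradiction, and you are somewhat more explicit than the paper in justifying why $\varphi$ is an open map (applying Theorem~\ref{TEO73} by translation at each interior point), which is a point the paper leaves implicit.
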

\begin{proof}  Replacing $f$ with the map $z\mapsto f(z+z_0)-f(z_0)$, allows us to
suppose that $f(z_0)=0$ and $z_0=0$.  
\begin{itemize}
\item[(a)] Let $r>0$ be such that $|f'(\zeta)|>|f'(0)|\big/ 2$ for all
$\zeta\in D(0;2r)\subset \Omega$ and such that the Taylor series (with
coefficients $a_n$) of $f$ around the origin converges in $D(0;2r)$. Given $z$
and $w$, both in $D(0;r)$, we find 
\[f(z)-f(w)= a_1(z-w)+a_2(z^2-w^2)+ \cdots + a_n(z^n -w^n) + \cdots \]
and, also supposing $z\neq w$, 
\[\frac{f(z)-f(w)}{z-w}= f'(0) +
\sum_{n=2}^{+\infty}a_n\sum_{k=0}^{n-1}z^{n-1-k}w^k . \] 
We have $\big|\sum_{n=2}^{+\infty}a_n\sum_{k=0}^{n-1}z^{n-1-k}w^k\big|\leq
\sum_{n=2}^{+\infty}|a_n|nr^{n-1}$, and 
the series $f'(z)=\sum_{n\geq 1} na_nz^{n-1}$ is absolutely convergent in the open disk $D(0;2r)$. Clearly, we can suppose $r$ small enough (with $r$ strictly positive) so that $\sum_{n\geq 2} n|a_n|r^{n-1}<|\,f'(0)\,|\big/2$. Thus, given two arbitrary points $z$ and $w$, both within $D(0;r)$, it follows that 
\[\Big|\frac{f(z)-f(w)}{z-w}\Big|>\frac{|f'(0)|}{2}>0 \ \ \textrm{and thus }\
f(z)\neq f(w).\] 
\item[(b)] It follows from the open mapping theorem (Theorem \ref{TEO73}), by noticing that $D(0;r)$ is
connected. 
\item[(c)] Since $\varphi$ is bijective and open, if $O$ is open then
the set $\big(\varphi^{-1}\big)^{-1}(O)=\varphi(O)$ is also open. As a consequence, $\varphi^{-1}$ is continuous.  
\item[(d)] In the Newton quotient $\frac{\varphi^{-1}(w)-\varphi^{-1}(w_0)}{w-w_0}$ 
we make the notational change $\varphi^{-1}(w)=z$, $\varphi^{-1}(w_0)=z_0$,
$w=f(z)$, and $w_0=f(z_0)$. Since the function $\varphi^{-1}$ is continuous, it follows that
$z \to z_0$ if $w \to w_0$ and consequently 
\[\big(\varphi^{-1}\big)'(w_0)=\lim\limits_{w \to
w_0}\frac{\varphi^{-1}(w)-\varphi^{-1}(w_0)}{w-w_0}=\lim_{z\to z_0}\frac{z -
z_0}{f(z)-f(z_0)}=\frac{1}{f'(z_0)}.\] 
\end{itemize}
\end{proof}

\begin{theorem}\label{TEO82} ({\sf Local Representation}) Let $f$ be analytic in $D(z_0;r)$, with $r>0$ and  
\[f(z)= a_0+\sum_{n=m}^{+\infty}a_n(z-z_0)^n, \ \textrm{where}\ m\ \textrm{is fixed},\ m\geq 1,\ \textrm{and}\ a_m\neq 0.\]
Then, there exists $\varphi$ analytic and bijective in an open disk centered at the origin such that $\varphi$ has differentiable inverse and satisfies 
\[f(z)= a_0+\varphi(z-z_0)^m .\]
\end{theorem}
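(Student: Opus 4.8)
The plan is to reduce to the case $z_0=0$, $a_0=0$ by replacing $f$ with the map $z\mapsto f(z+z_0)-a_0$, which is analytic near the origin and vanishes there; in this normalization the hypothesis becomes $f(z)=\sum_{n=m}^{+\infty}a_nz^n$ with $a_m\neq 0$. I would then factor out the zero of order $m$, writing $f(z)=z^mg(z)$ where $g(z)=\sum_{k=0}^{+\infty}a_{m+k}z^k$ is a convergent power series in the same disk with $g(0)=a_m\neq 0$. The entire problem now amounts to producing an analytic $m$-th root of $g$ on a neighborhood of $0$: if $\psi$ is analytic near $0$ with $\psi^m=g$ and $\psi(0)\neq 0$, then $\varphi(z):=z\psi(z)$ satisfies $\varphi(z)^m=z^mg(z)=f(z)$, while $\varphi(0)=0$ and $\varphi'(0)=\psi(0)\neq 0$, so Theorem \ref{TEO81} (the inverse function theorem) supplies a disk $D(0;\rho)$ on which $\varphi$ is injective and open — hence bijective onto an open set — with complex-differentiable inverse, which is exactly the conclusion sought (after undoing the normalization to recover $f(z)=a_0+\varphi(z-z_0)^m$).

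To construct $\psi$, I would invoke Proposition \ref{PROP21}. Fix any $c\in\mathbb C$ with $c^m=a_m$ and set $h(z)=g(z)/a_m-1$, which is analytic near $0$ with $h(0)=0$. By continuity of $h$ there is $\delta>0$ with $|h(z)|<1$ for all $z\in D(0;\delta)$, so the composition $B\circ h$, where $B(w)=\sum_{n=0}^{+\infty}\binom{1/m}{n}w^n$ is the complex binomial series of Proposition \ref{PROP21}, is analytic on $D(0;\delta)$ by the composition results for power series and for analytic functions listed in the preliminaries, and it satisfies $B(h(z))^m=1+h(z)=g(z)/a_m$. Hence $\psi(z):=c\,B(h(z))$ is analytic near $0$ with $\psi(0)=c\neq 0$ and $\psi(z)^m=c^mB(h(z))^m=g(z)$, as needed.

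Assembling the pieces: with $\varphi(z)=z\psi(z)$ as above, choose $\rho>0$ small enough that $\varphi|_{D(0;\rho)}$ lies in the domain where $\psi$ and $B\circ h$ are defined and that Theorem \ref{TEO81} applies; then $\varphi$ is analytic and injective on $D(0;\rho)$, its image is open, its inverse is complex-differentiable, and $\varphi(z)^m=f(z)$ there. Restoring $z_0$ and $a_0$ gives $f(z)=a_0+\varphi(z-z_0)^m$. The only step requiring genuine care is the construction of the analytic $m$-th root of $g$, and even that reduces cleanly to Proposition \ref{PROP21} once the leading coefficient is normalized to $1$; everything else is bookkeeping together with an appeal to the already-proved inverse function theorem.
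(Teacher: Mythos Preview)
Your argument is correct and follows essentially the same route as the paper: factor out $(z-z_0)^m$, use Proposition~\ref{PROP21} (the complex binomial series) to extract an analytic $m$th root of the nonvanishing factor after normalizing its constant term to $1$, set $\varphi(w)=w\cdot(\text{that root})$, and invoke Theorem~\ref{TEO81} since $\varphi'(0)\neq 0$. The only cosmetic difference is that you normalize $z_0=0$, $a_0=0$ at the outset, whereas the paper carries $z_0$ and $a_0$ through the computation.
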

\begin{proof} We write $f(z)-a_0=a_m(z-z_0)^m +a_{m+1}(z-z_0)^{m+1}+ \cdots $ as
\[f(z)-a_0= a_m(z-z_0)^m[1 + g(z-z_0)], \ \textrm{with}\ g(0)=0. \]
Let $a\in \mathbb C$ be a mth root of $a_m$ (i.e., $a^m=a_m$). Then, by Proposition \ref{PROP21}, on complex binomial series, we can consider a convergent power series
$G(z)$ satisfying, for $|z|$ sufficiently small, 
\[1 + g(z) = [1+G(z)]^m, \ \textrm{with}\ G(0)=0.\]
Then we have  
\[f(z)-a_0=\big\{a(z-z_0)[1+G(z-z_0)]\big\}^m,\]  
implying that
\[f(z) = a_0 + \varphi(z-z_0)^m, \ \textrm{where}\ 
\varphi(w)=aw\big(1+G(w)\big).\] 
Since $G(0)= 0$, the second coefficient of the Taylor series of $\varphi$ centered at $z=0$ is $a\neq 0$. Thus, by the inverse function theorem (Theorem ~\ref{TEO81}),
$\varphi$ enjoys the desired properties. 
\end{proof}

\

\section{The Theorem of Read and Connell-Porcelli and the Theorem of Hurwitz}

This section presents an adaptation for analytic functions of the proof of a theorem for holomorphic functions given by Whyburn in ~\cite{WH1} and ~\cite[pp.~81--82]{WH2}. This theorem was independently proved, in 1961, by A. H. Read ~\cite{REA} and by E. H. Connell and P. Porcelli ~\cite{COPO1}.  A proof by A. Hurwitz of a version of this theorem for analytic functions can be found in ~\cite{HU}.

\begin{theorem}\label{TEO91} Let $f$ be analytic in $D(0;1)$. Then, there exists a sequence $(a_j)\subset \mathbb C$ such that
\[f(z) =\sum a_jz^j,\  \textrm{for all} \ z \in D(0;1).\]
\end{theorem}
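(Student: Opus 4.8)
The plan is to take the candidate series to be the Taylor series of $f$ at the origin and to prove that its radius of convergence is at least $1$. Set $a_n=f^{(n)}(0)/n!$ (an analytic function is infinitely differentiable) and $g(z)=\sum a_nz^n$, and let $\rho>0$ be its radius of convergence; $\rho>0$ because, $f$ being analytic at $0$, it agrees with $g$ on a disk $D(0;r_0)$. On the open connected disk $D(0;\min(\rho,1))$ both $f$ and $g$ are analytic and they coincide on $D(0;r_0)$, a set with an accumulation point, so Proposition \ref{PROP23} (applied to $g-f$) forces $f=g$ throughout $D(0;\min(\rho,1))$. Thus the whole statement reduces to showing $\rho\geq 1$, which I will prove by contradiction: assuming $\rho<1$, I will show that the radius of convergence of $g$ actually exceeds $\rho$.

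So assume $\rho<1$. Since $\overline{D}(0;\rho)$ is a compact subset of $D(0;1)$, a Lebesgue-number argument applied to a finite subcover of $\overline{D}(0;\rho)$ by disks on which $f$ equals its Taylor series produces a single $\delta>0$ such that, for every $w\in\overline{D}(0;\rho)$, the Taylor series of $f$ at $w$ converges to $f$ on all of $D(w;\delta)$, with $\overline{D}(w;\delta/2)\subset\overline{D}(0;\rho+\delta/2)\subset D(0;1)$; and $M:=\sup_{\overline{D}(0;\rho+\delta/2)}|f|<+\infty$ by continuity. Fix $n$. Applying Cauchy's Inequalities (Corollary \ref{COR36}) to the expansion $f(w+\zeta)=\sum_{j\ge 0}\frac{f^{(j)}(w)}{j!}\zeta^{j}$ on $D(0;\delta)$, at radius $\delta/2$, gives $\bigl|f^{(n)}(w)/n!\bigr|\le M(2/\delta)^n$ for every $w\in\overline{D}(0;\rho)$; write $C=2/\delta$. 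On the other hand, differentiating $f=g=\sum_m a_mz^m$ term by term inside $D(0;\rho)$ shows that $w\mapsto f^{(n)}(w)/n!$ is, on $D(0;\rho)$, the convergent power series $\sum_{k\ge 0}\binom{n+k}{n}a_{n+k}w^{k}$, whose modulus is $\le MC^n$ there. Hence Corollary \ref{COR36} once more — now applied to this power series on $D(0;\rho)$ and letting the radius tend to $\rho$ — yields
\[\binom{n+k}{n}\,|a_{n+k}|\ \le\ M\,C^{\,n}\,\rho^{-k},\qquad\text{for all }n,k\in\mathbb N.\]

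Finally I would optimize. With $m=n+k$ the inequality reads $|a_m|\le M\rho^{-m}(C\rho)^{n}\big/\binom{m}{n}$ for every $n\in\{0,\dots,m\}$; using the elementary bound $\binom{m}{n}\ge(m/n)^n$ this becomes $|a_m|\le M\rho^{-m}\bigl(C\rho\,n/m\bigr)^{n}$. Choosing $n=\lceil\varepsilon m\rceil$ for a fixed $\varepsilon\in(0,1)$ with $C\rho\,\varepsilon<1$ makes $C\rho\,n/m$ eventually bounded by a constant $\theta<1$, so $|a_m|\le M\rho^{-m}\theta^{\,\varepsilon m}$ for all large $m$, whence $\limsup_m|a_m|^{1/m}\le\rho^{-1}\theta^{\,\varepsilon}<\rho^{-1}$. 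By the Cauchy-Hadamard formula the radius of convergence of $g$ is then at least $\rho\,\theta^{-\varepsilon}>\rho$, contradicting the definition of $\rho$. Therefore $\rho\geq 1$ and $f(z)=\sum a_nz^n$ for all $z\in D(0;1)$.

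I expect the main obstacle to be precisely this bootstrapping step: naive estimates only reproduce the already known bound ``radius $\ge\rho$'', and one has to squeeze out a strict improvement. The device that does it is the twofold use of Cauchy's Inequalities — first to bound \emph{all} higher derivatives of $f$ uniformly over $\overline{D}(0;\rho)$ by $MC^n$, then on the ``$n$th-derivative series'' $\sum_k\binom{n+k}{n}a_{n+k}w^{k}$ — combined with the exponential growth of $\binom{m}{n}$, which converts the uniform $C^n$ into a genuine gain in the radius. By comparison, the compactness set-up of $\delta$ and $M$ and the identity-principle reduction at the start are routine.
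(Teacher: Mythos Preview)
Your argument is correct and follows a genuinely different route from the paper's. The paper proceeds directly: for each fixed $n\ge1$ it forms the alternating average $g(z)=\sum_{k=0}^{2n-1}(-1)^kf(z\omega^k)$ with $\omega=e^{i\pi/n}$, observes that near $0$ the Taylor coefficients $b_j$ of $g$ vanish for $0\le j<n$ while $b_n=2na_n$, factors $g(z)=z^n\varphi(z)$ with $\varphi$ analytic on \emph{all} of $D(0;1)$, and then applies the maximum modulus principle to $\varphi$ to obtain $2n|a_n|=|\varphi(0)|\le\max_{|z|=r}|g(z)|/r^n\le 2nM(r)/r^n$. This is already the sharp Cauchy inequality $|a_n|r^n\le M(r)$ for every $r<1$ and every $n$, so Cauchy--Hadamard gives radius $\ge1$ in a single stroke. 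Your bootstrapping argument --- compactness to uniformize the local radius $\delta$, Corollary~\ref{COR36} first on the local expansions to bound $|f^{(n)}(w)|/n!$ uniformly over $\overline D(0;\rho)$ by $MC^n$, then again on the derivative series $\sum_k\binom{n+k}{n}a_{n+k}w^k$, followed by an optimization in $n$ via $\binom{m}{n}\ge(m/n)^n$ --- is more elaborate and yields only a strict but unspecified gain in the radius, just enough for the contradiction. What it buys is that it stays entirely within the Gutzmer--Parseval/Cauchy-inequality toolkit of Section~3 and avoids both the roots-of-unity averaging device and the maximum modulus principle; the paper's method, by contrast, is tailored to its central theme of polygonal averages and delivers the global Cauchy bound outright.
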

\begin{proof} Let us fix an arbitrary $n\geq 1$, with $n\in \mathbb N$. Let $g$ be the analytic function in $D(0;1)$ given by
\[g(z) = \sum_{k=0}^{2n-1}(-1)^kf(z\omega^k)\,,\ \textrm{where}\  \omega=e^{i\pi/n}\ \ \textrm{(thus, $\omega^n=-1$)}.\]
From the hypothesis it follows that there exists a complex sequence $(a_j)$ satisfying $f(z)=\sum a_j z^j$, for all $z \in D(0;\rho)$, for some $0<\rho<1$. Hence, there exists a complex sequence $(b_j)$ such that we have $g(z)= \sum b_j z^j$, for all $z \in D(0;\rho)$. In addition, 
\begin{displaymath}
\left\{\begin{array}{ll}
b_j = \sum\limits_{k=0}^{2n-1}(-1)^ka_j\omega^{jk} = a_j \frac{1 -\omega^{j2n}}{1 +\omega^j} = 0,  \ \textrm{if} \ 0\leq j<n,\\
\\
b_n  =  \sum\limits_{k=0}^{2n-1}(-1)^ka_n\omega^{nk} =2na_n .
\end{array}
\right.
\end{displaymath}
Therefore, we can write $\sum_{j=0}^{+\infty} b_jz^j = z^n\Big(2na_n + \sum_{j=n+1}^{+\infty} b_jz^{j-n}\Big)$, where $z$ belongs to $D(0;\rho)$.
Now, it is easy to see that the function 
\begin{displaymath}
\varphi(z)\ \ =\ \ \left\{\begin{array}{ll}
2na_n + \sum\limits_{j=n+1}^{+\infty} b_jz^{j-n}, & \textrm{if}\ z \in D(0;\rho),\\
\frac{g(z)}{z^n}, & \textrm{if}\ z \neq 0,  
\end{array}
\right.
\end{displaymath}
is well defined and analytic in $D(0;1)$. Putting $M(r)=\max\{|f(z)|: |z|=r\}$, with $0<r<1$, by Theorem \ref{TEO41} (the maximum modulus principle) we infer that
 \[2na_n=|\varphi(0)|\leq \max\limits_{|z|=r}|\varphi(z)|=\max\limits_{|z|=r}\frac{|g(z)|}{r^n}\leq \frac{2nM(r)}{r^n} .\]
Therefore, we have proved that
\[|a_n|r^n \leq M(r)\,, \ \ \ \textrm{if}\ 0<r<1\,,\ \textrm{for all}\ n \in \mathbb N.\]
Hence, if $R$ is the radius of convergence of $\sum a_nz^n$, from the Cauchy-Hadamard formula follows that
\[R^{-1}=\limsup\sqrt[n]{|a_n|}\leq \limsup\frac{\sqrt[n]{M(r)}}{r}= r^{-1},\]
which implies $R\geq r$ for all $0<r<1$ and then, $R \geq 1$. As a consequence, the power series $\sum a_nz^n$ converges within $D(0;1)$ and, through the principle of isolated zeros (Proposition \ref{PROP23}) we conclude that $f(z)=\sum a_nz^n$, for all $z \in D(0;1)$.
\end{proof}

\begin{remark} From Theorem ~\ref{TEO91} and Liouville's theorem (Theorem ~\ref{TEO312}) we obtain a very easy proof of the fundamental theorem of algebra. In fact, let us suppose that there exists a polynomial $p=p(z)$, degree $(p)\geq 1$, with no zeros. Hence, $1/p(z)$ is entire and by Theorem ~\ref{TEO91} together with point 5 on p. 3, we have $1/p(z) = \sum a_nz^n$, for all $z \in \mathbb C$. Moreover, since $|p(z)|\to +\infty$ as $|z|\to +\infty$ (see \cite{OO1}), there exists $M \in \mathbb R$ such that $|\sum a_nz^n|=1/|p(z)|\leq M$, for  all $z \in \mathbb C$. Finally, from Theorem \ref{TEO312} we conclude that $\sum a_nz^n$ is a constant and degree$(p)=0$, which is a contradiction.
\end{remark}

\

\section{The Schwarz Lemma and the Clunie-Jack Lemma}

Section 10 provides a modest generalization of the Schwarz Lemma for analytic functions, as a consequence of the Gutzmer-Parseval inequality for analytic functions (Theorem ~\ref{TEO33}). This generalization is inspired by a power series proof of the Schwarz Lemma for complex-differentiable functions, given by Burckel ~\cite[p.~191]{BU}. Erhardt Schmidt's famous proof of Schwarz's Lemma for holomorphic functions, first published by Carath\'{e}odory, in 1905 (see Osserman ~\cite{OSS}), can be seen in Carath\'{e}odory ~\cite[pp. ~135--136]{CAR}. See also Bak and Newman ~\cite[p.~94]{BN}, Conway ~\cite[pp.~130--131]{CON}, Lang ~\cite[pp.~210--211]{LA}, and Remmert ~\cite[p.~270]{RR1}.  

As a result of Schwarz's Lemma, this section presents a trivial proof of the Clunie-Jack Lemma (originally published in 1971), see Jack ~\cite{JA}. See also H. P. Boas ~\cite{BO}, Burckel ~\cite[p. 207]{BU}, Osserman ~\cite{OSS}, and P\'{o}lya and  Szeg\"{o} ~\cite[Part III, Problem ~291]{POSZ}.

Here and subsequently, $S^1$ stands for the set $\{z\in \mathbb C: |z|=1\}$.

\begin{theorem}\label{TEO101} ({\sf Schwarz's Lemma}) Let $f$ be analytic in $D(0;1)$ such that
\[  |f(z)|\leq 1,  \ \textrm{for all}\ z \in D(0;1),  \ \textrm{and}  \  f(0)=0.  \]
The following statements are true.
\begin{itemize}
\item[$\bullet$] We can write $f(z)=\sum_{n=1}^{+\infty}a_nz^n$, where
\[  \    \sum\limits_{n=1}^{+\infty}|a_n|^2\leq 1 \ \    \textrm{and}    \  \ |f(z)|\leq |z|,\  \textrm{for all}\ z\ \in D(0;1).\ \ \  \]
\item[$\bullet$] We have $|a_n|=1$, for some $n\geq 1$, if and only if there is $\omega\in S^1$ satisfying
\[f(z)=\omega z^n\,, \  \textrm{for all}\  z \in D(0;1).  \]
\item[$\bullet$] We have $|f(z)|=|z|$, for some $z\neq 0$, if and only if there is $\omega\in S^1$ satisfying 
\[f(z)=\omega z\,, \  \textrm{for all}\  z \in D(0;1).  \]
\end{itemize}
\end{theorem}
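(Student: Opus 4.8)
The plan is to deduce the whole statement from the Gutzmer--Parseval inequality for analytic functions (Theorem~\ref{TEO33}), the maximum modulus principle (Theorem~\ref{TEO38}), and the fact that an analytic function on $D(0;1)$ is globally the sum of a power series there (Theorem~\ref{TEO91}). First I would use Theorem~\ref{TEO91} to write $f(z)=\sum_{n=0}^{+\infty}a_nz^n$ for all $z\in D(0;1)$; since $a_0=f(0)=0$ this reads $f(z)=\sum_{n=1}^{+\infty}a_nz^n$. For $0\le r<1$ we have $M(r)=\max_{|z|=r}|f(z)|\le 1$, so Theorem~\ref{TEO33} gives $\sum_{n=1}^{N}|a_n|^2r^{2n}\le M(r)^2\le 1$ for every $N$; letting $r\to 1^-$ and then $N\to+\infty$ yields $\sum_{n=1}^{+\infty}|a_n|^2\le 1$. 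For the pointwise bound $|f(z)|\le|z|$, I would introduce the analytic function $g(z)=\sum_{n=1}^{+\infty}a_nz^{n-1}$ on $D(0;1)$ (well defined since $f$ has zero constant term), which satisfies $zg(z)=f(z)$; fixing $z$ with $|z|<1$ and choosing $r$ with $|z|<r<1$, on the circle $|\zeta|=r$ one has $|g(\zeta)|=|f(\zeta)|/r\le 1/r$, so by the maximum modulus principle $|g(z)|\le\max_{|\zeta|=r}|g(\zeta)|\le 1/r$, and letting $r\to 1^-$ gives $|g(z)|\le 1$, that is, $|f(z)|\le|z|$.

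The two equivalences are then short. The ``if'' directions are immediate: $f(z)=\omega z^n$ with $|\omega|=1$ clearly satisfies $|f|\le 1$ on $D(0;1)$, $f(0)=0$, and $a_n=\omega$, and the case $n=1$ covers the ``if'' half of the third bullet. For the ``only if'' of the second bullet, if $|a_n|=1$ for some $n\ge 1$ then $\sum_{k\ge 1}|a_k|^2\le 1$ forces $a_k=0$ for all $k\ne n$, hence $f(z)=a_nz^n$ with $\omega:=a_n\in S^1$. For the ``only if'' of the third bullet, if $|f(z_0)|=|z_0|$ with $z_0\ne 0$, then the analytic function $g$ above satisfies $|g(z_0)|=1$, while $|g|\le 1$ on $D(0;1)$ by the first bullet; thus $|g|$ has a local maximum at the interior point $z_0$, so $g$ is constant by the maximum modulus principle, say $g\equiv\omega$ with $|\omega|=|g(z_0)|=1$, and therefore $f(z)=\omega z$.

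The one step that is not pure bookkeeping is the passage from $|f|\le 1$ to $|f(z)|\le|z|$: the Gutzmer--Parseval inequality by itself only bounds $M(r)$ from below, and the naive Cauchy--Schwarz estimate $|f(z)|\le|z|\big(\sum_{n\ge 1}|a_n|^2\big)^{1/2}\big(1-|z|^2\big)^{-1/2}$ obtained from $\sum|a_n|^2\le 1$ is too weak. What makes it work is transferring the problem to the quotient $g=f/z$ and then running the maximum modulus principle on the nested closed disks $\overline{D}(0;r)$ as $r\to 1^-$. Once this is in place, the rest of the argument is elementary, and it also supplies $g$ as the natural auxiliary function for the equality case in the third bullet.
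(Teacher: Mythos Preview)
Your proof is correct and follows essentially the same approach as the paper's: both use Theorem~\ref{TEO91} to obtain the global power series, the Gutzmer--Parseval inequality (Theorem~\ref{TEO33}) to bound $\sum|a_n|^2$, and the maximum modulus principle applied to the quotient $g(z)=f(z)/z=\sum_{n\ge 1}a_nz^{n-1}$ on disks of radius $r\to 1^-$ to obtain $|f(z)|\le|z|$ and the equality case. The only cosmetic differences are that you name $g$ explicitly and cite Theorem~\ref{TEO38} rather than Theorem~\ref{TEO41} for the maximum modulus principle.
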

\begin{proof} Since $f(0)=0$, through using Theorem ~\ref{TEO91} (by Hurwitz, Connell-Porcelli, and Read) we write $f(z)=\sum_{n=1}^{+\infty}a_nz^n$, where $z \in D(0;1)$ and $a_n \in \mathbb C$, for all $n\geq 1$. As a consequence, since $|f(z)|\leq 1$ for all $z\in D(0;1)$, from the Gutzmer-Parseval inequality for analytic functions (Theorem \ref{TEO33}) we obtain the inequality 
\[ \sum_{n=1}^{+\infty}|a_n|^2r^{2n}\leq 1\,, \ \textrm{for all}\ r \ \textrm{in}\ [0,1)\,,\]
and taking the limit of such an inequality  as $r \to 1^{-}$ we find that $\sum_{n=1}^{+\infty}|a_n|^2\leq 1$. 

Thus, if $|a_n|=1$ for a particular $n\geq 1$, then we have $f(z)=a_nz^n$, for all $|z|<1$.

Now, let us consider any point $\zeta$ in the circle $\{z:|z|=r\}$, with $r$ fixed and $0<r<1$. It is clear that $|\sum_{n=1}^{+\infty} a_n\zeta^{n-1}|= r^{-1}|\sum_{n=1}^{+\infty} a_n\zeta^n|\leq r^{-1}$. Therefore, from the maximum modulus principle for analytic functions (Theorem \ref{TEO41}) it follows the inequality $|\sum_{n=1}^{+\infty} a_nz^{n-1}|\leq 1/r$, for all $z \in \overline{D}(0;r)$. Consequently, as $r$ is arbitrary in the interval $(0,1)$, we find that $|\sum_{n=1}^{+\infty} a_nz^{n-1}|\leq 1$, for all  $z \in D(0;1)$. 

Hence, we deduce that $|f(z)|=|z\sum_{n=1}^{+\infty} a_nz^{n-1}|\leq |z|$, for all $z \in D(0;1)$.

If $|\sum_{n=1}^{+\infty} a_n\zeta^n|=|\zeta|$, for some $\zeta$ such that $0<|\zeta|<1$, then we have the identity $|\sum_{n=1}^{+\infty} a_n\zeta^{n-1}|=1$. We already proved (two paragraphs above) the inequality $|\sum_{n=1}^{+\infty} a_nz^{n-1}|\leq 1$, for all $z$ inside $D(0;1)$. Therefore, employing the maximum modulus principle for analytic functions (Theorem ~\ref{TEO41}) we deduce the identity $\sum_{n=1}^{+\infty} a_nz^{n-1}=a_1\in S^1$, for all $z \in D(0;1)$. Thus, we conclude that $f(z)= \sum_{n=1}^{+\infty} a_nz^n=a_1z$, for all $z \in D(0;1)$.
\end{proof}

\newpage

\begin{theorem}\label{TEO102}({\sf Clunie-Jack Lemma})
Let $f:\overline{D}(0;1)\to \overline{D}(0;1)$ be non-constant and
analytic (see Definition \ref{DEF51}). The following statements hold.
\begin{itemize}
\item[$\bullet$] Supposing that $|f|$ has a maximum at $\alpha \in S^1$, then we have
\[\alpha\frac{f'(\alpha)}{f(\alpha)} >0 .\]
\item[$\bullet$] If all the conditions above are true and we also have $f(0)=0$, then
\[\alpha\frac{f'(\alpha)}{f(\alpha)} \geq 1 .\]
\end{itemize}
\end{theorem}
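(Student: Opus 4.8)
The plan is to treat the two bullets in turn, reducing the first to the Anti-Calculus Proposition (Theorem \ref{TEO52}) and a short direct computation, and the second to the Schwarz Lemma (Theorem \ref{TEO101}). For the first bullet, since $\alpha\in S^1$ is a point of maximum of $|f|$ on $\overline{D}(0;1)$ and $f$ is non-constant, Theorem \ref{TEO52}(a) already tells us $f'(\alpha)\neq 0$; also $f(\alpha)\neq 0$, since $|f(\alpha)|$ is the maximum of $|f|$ and $f$ is non-constant (so $|f|$ is not identically zero). Thus the quantity $\alpha f'(\alpha)/f(\alpha)$ is a nonzero complex number, and it remains to see it is real and positive. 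For this I would differentiate the real function $t\mapsto |f(\alpha e^{it})|^2 = f(\alpha e^{it})\overline{f(\alpha e^{it})}$ at $t=0$: since $|f|$ has a maximum at $\alpha$ along the circle $S^1$, this derivative vanishes, giving $\textrm{Re}\big[\,\overline{f(\alpha)}\,i\alpha f'(\alpha)\,\big]=0$, i.e.\ $\overline{f(\alpha)}\,\alpha f'(\alpha)$ is real; dividing by $|f(\alpha)|^2>0$ shows $\alpha f'(\alpha)/f(\alpha)\in\mathbb R$. To pin down the sign, I would use the radial direction: since $\alpha$ is a maximum of $|f|$ over the whole disk $\overline D(0;1)$ (not just the boundary circle), the inner radial derivative satisfies $\frac{d}{dr}\big|_{r=1^-}|f(r\alpha)|\geq 0$; a computation analogous to the one in the proof of Theorem \ref{TEO52} expresses this derivative in terms of $\textrm{Re}\big[\overline{f(\alpha)}\alpha f'(\alpha)\big]$ with a positive factor $1/(2|f(\alpha)|)$, forcing $\textrm{Re}\big[\overline{f(\alpha)}\alpha f'(\alpha)\big]\geq 0$; combined with the previous step and with $\alpha f'(\alpha)/f(\alpha)\neq 0$, we get $\alpha f'(\alpha)/f(\alpha)>0$.

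For the second bullet I would add the hypothesis $f(0)=0$ and invoke Schwarz's Lemma (Theorem \ref{TEO101}): we may write $f(z)=\sum_{n=1}^{+\infty}a_n z^n$ with $|f(z)|\leq|z|$ for all $z\in D(0;1)$, and moreover, by continuity of $f$ on $\overline D(0;1)$, $|f(\alpha)|\leq|\alpha|=1$. But $|f(\alpha)|$ is the maximum of $|f|$ on $\overline D(0;1)$, and already $|f(z)|\leq|z|<1$ inside the disk; hence the maximum is $|f(\alpha)|=1$, attained on $S^1$. Now I would consider $g(z)=f(z)/z=\sum_{n=1}^{+\infty}a_n z^{n-1}$, which by the argument in the proof of Theorem \ref{TEO101} satisfies $|g(z)|\leq 1$ on $D(0;1)$ and extends analytically (hence continuously) to $\overline D(0;1)$ with $|g(\alpha)|=|f(\alpha)|/|\alpha|=1$, so $|g|$ attains an interior-or-boundary maximum value $1$ at $\alpha\in S^1$. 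From $f=z\,g$ we get $f'(z)=g(z)+z\,g'(z)$, so
\[
\alpha\frac{f'(\alpha)}{f(\alpha)}=\alpha\frac{g(\alpha)+\alpha g'(\alpha)}{\alpha g(\alpha)}=1+\alpha\frac{g'(\alpha)}{g(\alpha)},
\]
and by the first bullet applied to $g$ (which is non-constant since $f$ is non-constant, and has $|g|$ maximal at $\alpha\in S^1$), the term $\alpha g'(\alpha)/g(\alpha)$ is a positive real number, unless $g'(\alpha)=0$; in the latter degenerate case I would argue directly from $|g(\alpha)|=1$ that the contribution is $0$, so in all cases $\alpha g'(\alpha)/g(\alpha)\geq 0$ and therefore $\alpha f'(\alpha)/f(\alpha)\geq 1$.

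The step I expect to be the main obstacle is the sign determination in the first bullet: establishing that the radial inner derivative of $|f|$ at the boundary maximum is genuinely nonnegative and, more importantly, that combined with reality it yields strict positivity rather than merely $\geq 0$. The reality argument via the tangential derivative is routine, but strictness must come from $f'(\alpha)\neq 0$ (Theorem \ref{TEO52}(a)) together with $f(\alpha)\neq 0$; I would need to check carefully that these two facts really do exclude the value $0$ for the (already known to be real and nonnegative) quantity $\alpha f'(\alpha)/f(\alpha)$ — essentially, $\alpha f'(\alpha)/f(\alpha)=0$ would force $f'(\alpha)=0$, contradicting Theorem \ref{TEO52}(a). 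A secondary technical point is justifying the analytic (hence continuous) extension of $g=f/z$ across the origin and to the closed disk, which follows from $f(0)=0$ and the principle of isolated zeros, but should be stated explicitly.
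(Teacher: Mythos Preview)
Your argument is essentially correct, but it follows a genuinely different route from the paper, especially for the second bullet.

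For the first bullet, both proofs extract reality and positivity of $\alpha f'(\alpha)/f(\alpha)$ from the behaviour of $|f|^2$ near the boundary maximum. The paper normalizes via $g(z)=f(\alpha z)/f(\alpha)$ and expands $|g(1+re^{i\theta})|^2-1$ for all inward angles $\theta\in(\tfrac{\pi}{2},\tfrac{3\pi}{2})$ at once, reading off both $\textrm{Im}\,g'(1)=0$ and $\textrm{Re}\,g'(1)\geq 0$ from the half-plane of directions. You instead split into the tangential derivative (giving reality) and the inward radial derivative (giving nonnegativity), then invoke the Anti-Calculus Proposition for strictness. Your packaging is a bit more economical; the paper's is closer in spirit to its own proof of Theorem~\ref{TEO52}.

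For the second bullet the approaches diverge. The paper stays with the same $g(z)=f(\alpha z)/f(\alpha)$, applies Schwarz to get $|g(t)|\leq t$ on $[0,1)$, and estimates the difference quotient directly: $|g(t)-g(1)|\geq 1-|g(t)|\geq 1-t$, whence $|g'(1)|\geq 1$. You instead factor $f(z)=z\,g(z)$ with $g=f/z$ and bootstrap the first bullet applied to $g$, obtaining $\alpha f'(\alpha)/f(\alpha)=1+\alpha g'(\alpha)/g(\alpha)\geq 1$. This is a clean reduction and arguably more conceptual; the paper's estimate is shorter and avoids re-checking the hypotheses of the first bullet for $g$.

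Two small corrections to your write-up. First, the assertion $|f(\alpha)|=1$ is false in general (take $f(z)=z/2$); fortunately you do not actually use it. What you need, and what does hold, is that $|g|$ attains its maximum on $\overline{D}(0;1)$ at $\alpha$: on $S^1$ one has $|g(\beta)|=|f(\beta)|\leq |f(\alpha)|=|g(\alpha)|$, and then the maximum modulus principle pushes the maximum to the boundary. Second, your degenerate case in the second bullet is misidentified: if $g$ is non-constant then the first bullet (via Theorem~\ref{TEO52}) already forces $g'(\alpha)\neq 0$, so the exceptional case is not ``$g'(\alpha)=0$'' but ``$g$ constant'', i.e.\ $f(z)=cz$; there one checks directly that $\alpha f'(\alpha)/f(\alpha)=1$.
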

\begin{proof} Since $f$ is non-constant, we have $f(\alpha)\neq
0$. From the anti-calculus proposition (Theorem \ref{TEO52}) it follows
that $f'(\alpha)\neq 0$. Considering the function  
\[g(z)= \frac{f(\alpha z)}{f(\alpha)}\,,\ \textrm{where}\ z \in \overline{D}(0;1),\] 
we have $g(1)=1$ and $|g|$ attains its maximum value $1$ at the point $z=1$. Moreover,
in a small open disk centered at $z=1$ we have 
\begin{equation}\label{EQ10.1}
g(z) = 1 + \varphi(z)(z-1), \ 
\textrm{where}\ \varphi (z) \to g'(1) \ \textrm{if}\ z \to 1, \
\textrm{with}\ z \in D(0;1).
\end{equation}
We also have
\begin{equation}\label{EQ10.2}
|g(z)|^2 \leq 1=|g(1)|^2, \  \textrm{for all} \ z \in \overline{D}(0;1). 
\end{equation} 
Substituting into (\ref{EQ10.1}) the expression $z = 1 + re^{i\theta}$, with $\theta
\in  (\frac{\pi}{2},\frac{3\pi}{2})$ and $r=r_\theta$ strictly positive and small enough so that
$z \in D(0;1)$, and the expression so obtained into (\ref{EQ10.2}), we find 
\[1  +  2\textrm{Re}\big[\varphi(1 + re^{i\theta})re^{-i\theta}\big]\, +\,
r^2|\varphi(1 + re^{i\theta})|^2  \leq  1.\] 
Cancelling $1$ on each side of the inequality right above and then dividing by $r>0$, we
arrive at 
\[ 2\textrm{Re}\big[\varphi(1 + re^{i\theta})e^{-i\theta}\big]\, +\,
r|\varphi(1 + re^{i\theta})|^2 \leq 0  .\] 
Now, fixing $\theta \in (\frac{\pi}{2},\frac{3\pi}{2})$ and letting $r \to
0^{+}$ we find, since $\varphi(1) =g'(1)$, 
\begin{equation}\label{EQ10.3}
2\textrm{Re}\big[g'(1)e^{-i\theta}\big] \leq 0  .
\end{equation}
Since $\theta$ is arbitrary in $(\frac{\pi}{2}\,,\frac{3\pi}{2})$ and the
expression in (\ref{EQ10.3}) is continuous in $\theta$, we obtain
\begin{displaymath}
\left\{\begin{array}{ll}
-2\textrm{Re}\big[g'(1)\big]\ \leq 0,\ &  \ \textrm{if}\ \theta =\pi,\\
-2\textrm{Im}\big[g'(1)\big]\ \leq 0,\ & \ \textrm{if}\  \theta \to \frac{\pi}{2}^{+},\\
+2\textrm{Im}\big[g'(1)\big]\ \leq 0,\ & \ \textrm{if}\ \theta \to \frac{3\pi}{2}^{-} .\\
\end{array}
\right.
\end{displaymath}
Therefore, the number $g'(1)= \alpha \frac{f'(\alpha)}{f(\alpha)}$ is real and strictly
 positive. This proves the first statement.

To prove the second statement we notice that since $f(0)=0$, we have $g(0)=0$. As a consequence, employing
the Schwarz Lemma (Theorem \ref{TEO101}) we conclude that $|g(z)|\leq |z|$, for all $z \in \overline{D}(0;1)$.
Finally, given an arbitrary $t$ in $(0,1)$ we have $|g(t)-g(1)|\geq 1 -|g(t)|\geq 1-t$ and then
\[|g'(1)| =\left|\lim_{t \to 1^{-}}\frac{g(t)-g(1)}{t-1}\right|\geq \lim_{t\to 1}
\left|\frac{1-t}{t-1}\right|=1.\]
\end{proof}

\section{The Weierstrass Double Series Theorem} 

\begin{theorem}\label{TEO111}({\sf Weierstrass's Double Series Theorem}) Let
$\sum_{\mu=0}^{+\infty}f_\mu(z)$ be a series of convergent power series
$f_\mu(z) = \sum _{n= 0}^{+\infty}a_n(\mu)z^n$, where $\mu \in \mathbb N$, in the disk $D(0;R)$, where $R>0$, with coefficients $a_n(\mu) \in \mathbb C$. Let us suppose that the series $\sum f_\mu$ converges uniformly in
$\overline{D}(0;\rho)$, for each $\rho$ such that $0<\rho<R$, to the function $F(z)=\sum_{\mu=0}^{+\infty}f_\mu(z)$ in $D(0;R)$. Then, for all $z \in D(0;R)$ and all $k \in \mathbb N$ we have 
\[F(z) = \sum_{n=0}^{+\infty} \sum_{\mu=0}^{+\infty}a_n(\mu)z^n\ \ \  
\textrm{and} \ \  F^{(k)}(z)= \sum_{\mu=0}^{+\infty}f_\mu^{(k)}(z),\]
with uniform convergence in every closed disk $\overline{D}(0;\rho)$, where
 $0<\rho<R$. 
\end{theorem}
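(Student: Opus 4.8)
The plan is to extract the Taylor coefficients of the sum $F$ by a Gutzmer--Parseval/Cauchy-type argument, and then bootstrap from the coefficient identity to the termwise-differentiation statement. First I would fix $\rho$ with $0<\rho<R$ and work on $\overline{D}(0;\rho)$. For each $n$ define $A_n=\sum_{\mu=0}^{+\infty}a_n(\mu)$; the first task is to show this series converges. To do this, apply Cauchy's Inequalities (Corollary \ref{COR36}) to each partial sum $S_M(z)=\sum_{\mu=0}^{M}f_\mu(z)$, which is a power series on $D(0;R)$ whose $n$th coefficient is $\sum_{\mu=0}^{M}a_n(\mu)$; this gives $\bigl|\sum_{\mu=0}^{M}a_n(\mu)\bigr|\le \max_{|z|=r}|S_M(z)|/r^n$ for any $r<R$. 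Since $S_M\to F$ uniformly on $|z|=r$, the right-hand side is bounded uniformly in $M$, which already shows $(A_n)$ is well behaved, but to get genuine convergence of $\sum_\mu a_n(\mu)$ I would instead apply the same Cauchy bound to the tail $\sum_{\mu=M+1}^{M'}f_\mu(z)$, whose $n$th coefficient is $\sum_{\mu=M+1}^{M'}a_n(\mu)$; uniform convergence of $\sum f_\mu$ makes $\max_{|z|=r}\bigl|\sum_{\mu=M+1}^{M'}f_\mu\bigr|$ small, so the partial sums of $\sum_\mu a_n(\mu)$ are Cauchy. Hence $A_n$ exists, and letting $M\to\infty$ in the Cauchy bound for $S_M$ yields $|A_n|\le M_F(r)/r^n$ for every $r<R$, where $M_F(r)=\max_{|z|=r}|F(z)|$; by Cauchy--Hadamard the power series $G(z)=\sum A_n z^n$ has radius of convergence $\ge R$.

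Next I would show $F(z)=G(z)=\sum_{n}A_n z^n$ on $D(0;R)$, with uniform convergence on each $\overline{D}(0;\rho)$. Fix $\rho<r<R$. For $|z|\le\rho$ estimate
\[
\Bigl|F(z)-\sum_{n=0}^{N}A_n z^n\Bigr|\le |F(z)-S_M(z)| + \Bigl|S_M(z)-\sum_{n=0}^{N}\Bigl(\sum_{\mu=0}^{M}a_n(\mu)\Bigr)z^n\Bigr| + \sum_{n=0}^{N}\Bigl|\sum_{\mu=M+1}^{+\infty}a_n(\mu)\Bigr|\rho^n.
\]
The first term is small for $M$ large by uniform convergence; the middle term is the tail of the (convergent) Taylor expansion of the fixed power series $S_M$ on $\overline{D}(0;\rho)$, small for $N$ large once $M$ is fixed; the third term is controlled using the Cauchy bound $\bigl|\sum_{\mu=M+1}^{+\infty}a_n(\mu)\bigr|\le \varepsilon_M/r^n$ with $\varepsilon_M\to 0$, so it is bounded by $\varepsilon_M\sum_{n\ge 0}(\rho/r)^n=\varepsilon_M/(1-\rho/r)$, small for $M$ large and \emph{independent of $N$}. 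Choosing $M$ then $N$ appropriately and noting the bound is uniform in $|z|\le\rho$ gives the double-series representation with the claimed uniform convergence. This also identifies $F\in\mathcal A(D(0;R))$ and $A_n=F^{(n)}(0)/n!$.

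For the derivative statement, note that $F$ is now a convergent power series on $D(0;R)$, hence infinitely differentiable there with $F^{(k)}(z)=\sum_{n\ge k} n(n-1)\cdots(n-k+1)A_n z^{n-k}$, and likewise each $f_\mu^{(k)}(z)=\sum_{n\ge k} n\cdots(n-k+1)a_n(\mu)z^{n-k}$. The cleanest route is to apply the already-proved part of the theorem to the series $\sum_\mu f_\mu'$: once I show $\sum_\mu f_\mu'$ converges uniformly on each $\overline{D}(0;\rho)$, the first part gives $\sum_\mu f_\mu'(z)=\sum_n\bigl(\sum_\mu (n+1)a_{n+1}(\mu)\bigr)z^n=\sum_n (n+1)A_{n+1}z^n=F'(z)$, uniformly on each $\overline{D}(0;\rho)$; then iterate $k$ times. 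The main obstacle is therefore establishing uniform convergence of $\sum_\mu f_\mu'$ on $\overline{D}(0;\rho)$ from uniform convergence of $\sum_\mu f_\mu$ on $\overline{D}(0;\rho')$ for $\rho<\rho'<R$ — a Cauchy-estimate-for-the-derivative argument. This is exactly what Corollary \ref{COR37} provides: applied to the tail $g_{M,M'}=\sum_{\mu=M+1}^{M'}f_\mu$, which satisfies $\sup_{D(0;\rho')}|g_{M,M'}|\le\varepsilon$ for $M$ large by uniform convergence, it yields $\max_{\overline{D}(0;\rho)}|g_{M,M'}'|\le \varepsilon/(\rho'-\rho)$. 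Hence the partial sums of $\sum_\mu f_\mu'$ are uniformly Cauchy on $\overline{D}(0;\rho)$, completing the induction and the proof.
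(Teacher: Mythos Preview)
Your proof is correct and follows essentially the same approach as the paper: bound the coefficient tails via Cauchy-type estimates (the paper invokes Theorem~\ref{TEO33} directly, you use its Corollary~\ref{COR36}), sum the resulting geometric series $(\rho/r)^n$ to identify $F$ with $\sum_n A_n z^n$, and then use Corollary~\ref{COR37} on the tails to obtain uniform convergence of the derivatives. The only cosmetic differences are your three-term split (versus the paper's direct comparison of $\sum_n A_n z^n$ with $\sum_{\mu=0}^{\nu} f_\mu$) and your Cauchy-sequence route to the derivative statement (versus the paper applying Corollary~\ref{COR37} directly to $s_\nu-F$, once $F$ is known to be a power series).
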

\begin{proof} Let us fix $r$, where $\rho<r<R$. By hypothesis, given
$\epsilon>0$ there exists $\mu_0=\mu_0(\epsilon)\geq 0$ such that
\[|f_{\mu+1}(z)+ \cdots +f_{\mu+p}(z)|\leq \epsilon, \  \textrm{for all}\ \mu\geq
\mu_0,\ \textrm{all}\  |z|\leq r,\ \textrm{and all}\ p\in \mathbb N .\] 
Then, from Theorem \ref{TEO33} (the Gutzmer-Parseval inequality for analytic functions)
follows 
\[\sum_{n=0}^{+\infty}|a_n(\mu+1) + \cdots +a_n(\mu +p)|^2\,|z|^{2n}\leq
 \epsilon^2,\ \textrm{for all}\ \mu\geq \mu_0,\ \textrm{all} \ |z|\leq r,\ \textrm{and all} \ p\in \mathbb N, \] 
and in this way we have [for $z \in \overline{D}(0;\rho)$]
\begin{displaymath} 
(11.1) \ \ \ \ \ \ \ \ \ \left\{\begin{array}{ll}
 |a_n(\mu+1) + \cdots +a_n(\mu +p)|\,|z|^n = \\
= \sqrt{|a_n(\mu+1)
+ \cdots +a_n(\mu +p)|^2\,|r|^{2n}}\, \frac{|z|^{\,n}}{r^n}\leq
\epsilon\big(\frac{\rho}{r}\big)^n,\\ 
 \textrm{for all} \ \mu\geq \mu_0=\mu_0(\epsilon), \ \textrm{all} \ |z|\leq \rho, \ \textrm{all}\ p\in \mathbb N,\ \textrm{and all} \ n\in \mathbb N. 
\end{array}
\right. \ \   \ \ \ \ \ \ \ \ \ \ \ \ \ \ \ \ \ \
\end{displaymath}

Hence, for any $n\in \mathbb N$ and any $z\in \overline{D}(0;\rho)$, we proved
that given $\epsilon >0$ then there exists $\mu_0=\mu_0(\epsilon)$ such that  
$|a_n(\mu+1)z^n + \cdots +a_n(\mu+p)z^n|\leq \epsilon$ (since
$\rho/r\leq 1$), for all $\mu\geq \mu_0$ and all $p\in \mathbb N$. Thus, the series $\sum_{\mu=0}^{+\infty} a_n(\mu)z^n$ fulfills the well-known Cauchy's Criterion for numerical
series and converges and, therefore, so does $\sum_{\mu=0}^{+\infty}
a_n(\mu)$. Next, we return to (11.1). 

Letting $p\to +\infty$ at (11.1), and using the index $\nu \in \mathbb N$ to
label a sequence of partial sums, we find the inequalities 
$\big|\,\sum_{\mu =0}^{+\infty}a_n(\mu) \,-\, \sum_{\mu =0}^\nu
a_n(\mu)\,\big|\,|z|^n \leq\epsilon (\rho/r)^n$, for all $\nu\geq \mu_0$,
all $|z|\leq \rho$, and all $n\in \mathbb N$. Summing up these
inequalities over $n\in \mathbb N$, we obtain 
\[\sum_{n=0}^{+\infty}\Big|\sum_{\mu =0}^{+\infty}a_n(\mu) - \sum_{\mu
=0}^\nu a_n(\mu)\Big|\,|z|^n \leq \frac{\epsilon}{1 - \frac{\rho}{r}}, \ 
\textrm{for all} \ \nu\geq \mu_0 \ \textrm{and all} \ |z|\leq \rho .\]  
As a consequence, given an index $\nu\geq \mu_0$ and a point $z \in \overline{D}(0;\rho)$, we have the inequality
$\big|\sum_{n=0}^{+\infty}\big[\sum_{\mu =0}^{+\infty}a_n(\mu) - \sum_{\mu
=0}^\nu a_n(\mu)\big]z^n\big|\leq \epsilon r/(r -\rho)$, which entails   
\[\Big|\sum_{n=0}^{+\infty}\sum_{\mu =0}^{+\infty}a_n(\mu)\,z^n  - \sum_{\mu
=0}^\nu f_\mu(z)\Big| \leq \frac{\epsilon r}{r -\rho},\  \textrm{for all}\  \nu\geq
\mu_0(\epsilon) \ \textrm{and all}\ |z|\leq \rho.\] 
Taking $\rho$ arbitrarily close to $R$ shows that
$F(z)=\sum_{\mu=0}^{+\infty}f_\mu(z) =  \sum_{n=0}^{+\infty}\sum_{\mu
=0}^{+\infty}a_n(\mu)\,z^n$, for every $z \in D(0;R)$, with the convergence
uniform over $\overline{D}(0;\rho)$, if $0<\rho<R$. 

Finally, putting $s_\nu= f_0+ \cdots +f_\nu$, where $\nu\in \mathbb N$, we see that the sequence $(s_\nu - F)_{\nu \in \mathbb N}$ converges uniformly to the zero function over $\overline{D}(0;\rho)$, if $0<\rho<R$, and then by Corollary \ref{COR37} the sequence $(s_\nu' -F')_{\nu \in \mathbb N}$ also does. Proceeding by induction on $k$ we obtain the identities $\sum_{\mu \geq 0}^{+\infty} f^{(k)}_\mu (z)= F^{(k)}(z)$, for arbitrary $k \in \mathbb N$ and $z\in D(0;R)$, with uniform convergence over all the compact disks $\overline{D}(0;\rho)$, where $0<\rho<R$. 
\end{proof}

\begin{notation}\label{DEF112} Let us consider $X$ a nonempty subset of $\mathbb C$.
\begin{itemize}
\item[$\circ$] We denote by $C(X)$ the set $\{f:X\to \mathbb C\,, \ \textrm{where}\ f\ \textrm{is continuous}\}$. 
\item[$\circ$] Given $K$ a nonempty compact subset of $X$ and $f\in C(X)$, we put 
\[|f|_K= \sup_{z \in K}|f(z)| = \max_{z \in K}|f(z)| .\]
\end{itemize}
\end{notation}

The number $|f|_K$ is called the \textit{ norm} (the sup norm) of $f$ over $K$.

Given a sequence $(f_n)$ in $C(X)$, a compact set $K\subset X$, and $f\in C(X)$, it is clear that $(f_n)$ converges uniformly to $f$ on $K$ if and only if $|f_n -f|_K \to 0$ as $n\to +\infty$.

\begin{definition}\label{DEF113} A sequence, or a series, of functions in $C(X)$ \textit{converges compactly} on $X$ if it converges uniformly on every compact subset of $X$. 
\end{definition}

\begin{definition}\label{DEF114} Given $X\subset \mathbb C$, we say that \begin{itemize}
\item[$\circ$] $L$ is a \textit{ compact neighborhood} of $X$ if $L$ is compact and there exists an open set $V$ such that $X\subset V \subset L$. 
\item[$\circ$] the \textit{ boundary} of $X$ is  
\[\partial X  =\big\{\zeta \in \mathbb C: D(\zeta ;r) \cap X\neq \emptyset
\ \ \textrm{and}\ \ D(\zeta;r)\cap\big(\mathbb C\setminus X\big)\neq
\emptyset, \ \textrm{for all}\ r>0 \big\} . \] 
\end{itemize}
\end{definition}

\begin{corollary}\label{COR115} Let $(f_n)_{n \in \mathbb N}$ be a sequence in $\mathcal{A}(\Omega)\cap C(\overline{\Omega})$, with $\Omega$ connected and bounded, such that the sequence $(f_n|_{_{\partial \Omega}})_{n\in \mathbb N}$ converges uniformly on $\partial \Omega$. Then, 
\begin{itemize}
\item[(a)] For every $k\in \mathbb N$, the sequence $(f_n^{(k)})_{n\in\mathbb N}$ converges compactly on $\Omega$. 
\item[(b)] If $f=\lim f_n$, then $f\in \mathcal{A}(\Omega)\cap
C(\overline{\Omega})$. 
\item[(c)] The sequence $(f_n^{(k)})_{n\in \mathbb N}$ converges compactly to $f^{(k)}$ on $\Omega$, for every $k\in \mathbb N$. 
\end{itemize}
\end{corollary}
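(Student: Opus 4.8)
The plan is to deduce everything from the Weierstrass Double Series Theorem (Theorem~\ref{TEO111}) applied to the telescoping series $f_0 + \sum_{\mu\geq 0}(f_{\mu+1}-f_\mu)$, after first upgrading the hypothesis ``uniform convergence on $\partial\Omega$'' to ``uniform convergence on $\overline{\Omega}$'' via a maximum-modulus argument. First I would let $\varepsilon_{n,p} = |f_{n+p}-f_n|_{\partial\Omega}$; by hypothesis $\sup_p \varepsilon_{n,p}\to 0$ as $n\to\infty$. Since $f_{n+p}-f_n\in\mathcal{A}(\Omega)\cap C(\overline{\Omega})$ and $\overline{\Omega}$ is compact (as $\Omega$ is bounded), the modulus $|f_{n+p}-f_n|$ attains its maximum over $\overline{\Omega}$ at some point; by the Maximum Modulus Principle (Theorem~\ref{TEO41}(a)) applied on each connected component of $\Omega$, that maximum is attained on $\partial\Omega$ (a non-constant analytic function has no interior local maximum, and a constant one is controlled by its boundary values directly). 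Hence $|f_{n+p}-f_n|_{\overline{\Omega}} = |f_{n+p}-f_n|_{\partial\Omega} = \varepsilon_{n,p}$, so $(f_n)$ is uniformly Cauchy on $\overline{\Omega}$ and converges uniformly there to some $f\in C(\overline{\Omega})$.

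Next I would prove (b) and the $k=0$ case of (a) and (c) locally. Fix $z_0\in\Omega$ and a closed disk $\overline{D}(z_0;r)\subset\Omega$; expand each $f_n$ in its Taylor series $f_n(z)=\sum_m c_m(n)(z-z_0)^m$, convergent in an open disk about $z_0$ contained in $\Omega$ of radius $>r$, say (using that analyticity gives a local power series and the Taylor-recentering bullet from the preliminaries). Set $g_\mu = f_{\mu+1}-f_\mu$ for $\mu\geq 0$ and $g_{-1}=f_0$, so that $\sum_{\mu\geq -1} g_\mu$ converges uniformly to $f$ on $\overline{D}(z_0;r)$, in particular on every smaller closed disk. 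Theorem~\ref{TEO111} then applies with $F=f$ on the disk $D(z_0;r)$: it yields that $f$ is given there by a convergent power series (hence $f\in\mathcal{A}(\Omega)$, completing (b)), and that $F^{(k)}(z)=\sum_{\mu\geq -1} g_\mu^{(k)}(z)=\lim_n f_n^{(k)}(z)$ with convergence uniform on each $\overline{D}(z_0;\rho)$, $0<\rho<r$. This gives (a) and (c) on a neighborhood of each point of $\Omega$; since an arbitrary compact $K\subset\Omega$ is covered by finitely many such disks, uniform convergence of $(f_n^{(k)})$ to $f^{(k)}$ on $K$ follows by taking the worst of finitely many estimates. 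This proves (a) and (c) in full.

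The main obstacle — and the only genuinely non-bookkeeping point — is the very first step: justifying that uniform convergence on $\partial\Omega$ forces uniform convergence on all of $\overline{\Omega}$. This is a maximum-principle estimate, but one must be slightly careful because $\Omega$ need not equal the interior of $\overline{\Omega}$ and because $\partial\Omega$ is the topological boundary of $\Omega$; what makes it work is simply that $f_{n+p}-f_n$ is continuous on the compact set $\overline{\Omega}$, so its modulus attains a maximum there, and if that maximum were attained at an interior point of $\Omega$ the Maximum Modulus Principle (Theorem~\ref{TEO41}) would force $f_{n+p}-f_n$ to be locally constant, hence constant on that component, hence equal on the closure of that component to a boundary value. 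In all cases the sup over $\overline{\Omega}$ equals the sup over $\partial\Omega$. Once this is in hand, everything else is a routine transcription of Theorem~\ref{TEO111} and a finite-subcover argument, using Corollary~\ref{COR37} in the background exactly as in the proof of Theorem~\ref{TEO111} to pass derivatives through uniform limits.
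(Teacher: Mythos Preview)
Your outline follows the paper's proof essentially step for step: maximum modulus to pass from $\partial\Omega$ to $\overline{\Omega}$, then the telescoping series $f_0+\sum(f_{\mu+1}-f_\mu)$ on a local disk, then Theorem~\ref{TEO111}, then a finite-cover argument. The overall architecture is correct.

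There is, however, one genuine gap. To invoke Theorem~\ref{TEO111} on a disk $D(z_0;R)$ with $R>r$ you need each $f_n$ (hence each $g_\mu$) to be given by a \emph{single} power series centered at $z_0$ valid throughout that disk. You justify this by ``analyticity gives a local power series and the Taylor-recentering bullet from the preliminaries,'' but neither of those delivers what you need. Definition~\ref{DEF22} only guarantees a power series in \emph{some} radius $r(z_0)>0$, which a priori could be much smaller than $r$ and could depend on $n$; the recentering bullet lets you move the center \emph{within} a disk where you already know the series converges, it never enlarges the radius. Without a common disk of convergence for all the $f_n$, the hypotheses of Theorem~\ref{TEO111} are not met.

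The missing ingredient is precisely Theorem~\ref{TEO91} (Hurwitz, Read, Connell--Porcelli): an analytic function on a disk is represented by one power series on the whole disk. The paper's proof cites Theorem~\ref{TEO91} at exactly this spot. In an integral-based course this would follow from Cauchy's formula, but in the power-series framework of this paper it is a nontrivial theorem that must be invoked explicitly. Once you add that citation, your argument coincides with the paper's.
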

\begin{proof} By employing the maximum modulus principle (Theorem \ref{TEO41}) we deduce the identities $|f_n-f_m|_{\overline{\Omega}}= |f_n -f_m|_{\partial \Omega}$, for all $n\in \mathbb N$ and all $m\in \mathbb N$. Therefore, the sequence $(f_n)$ converges uniformly on $\overline{\Omega}$ to a function $f\in C(\overline{\Omega})$. 

Next, let us consider an arbitrary compact disk $\overline{D}(z_0;r)\subset \Omega$, where $r>0$. From Theorem ~\ref{TEO91} (by Connell-Porcelli, Hurwitz, and Read) we know that throughout the disk $D(z_0;r)$ each function $f_n$ is given by its Taylor series centered at $z_0$. In addition, the nth partial sum of the series 
\[f_1 + \sum_{n=1}^{+\infty}(f_{n+1}-f_n)\]
is $s_n= f_1+(f_2-f_1)+ \cdots +(f_{n+1}-f_n)= f_{n+1}$ and, by the previous paragraph, the sequence $(s_n)$ and the series $f_1 + \sum_{n=1}^{+\infty}(f_{n+1}-f_n)$ both converge uniformly to $f$ on $\overline{D}(z_0;r)$. From the Weierstrass double series theorem (Theorem \ref{TEO111}) it follows that $f$ is analytic in $D(z_0;r)$ and $f_n^{(k)}\to f^{(k)}$ compactly on
$D(z_0;r)$, for every $ k\in \mathbb N$. Finally, through a simple compactness argument we infer that the sequence $(f_n^{(k)})_{n\in \mathbb N}$ converges compactly to $f^{(k)}$ on $\Omega$, for every $k \in \mathbb N$. 
\end{proof}

\section{Montel's Theorem}

For the sake of completeness, in this section we present a proof of Montel's Theorem for analytic functions. This demonstration employs Corollary ~\ref{COR36} (Cauchy's Inequalities) and Corollary ~\ref{COR37}, both following the Gutzmer-Parseval inequality for analytic functions (Theorem ~\ref{TEO33}). For additional power series proofs of Montel's Theorem, see Narasimhan and Nievergelt ~\cite[pp.~34--35]{NN} and Read ~\cite{REA}.

\begin{definition}\label{DEF121} A family $\mathcal{F}$ contained in $C(\Omega)$ is 
\begin{itemize}
\item[$\circ$] \textit{ normal} if every sequence in $\mathcal{F}$ contains a subsequence compactly convergent to a function $f$ [it is clear that $f\in C(\Omega)$; it is not required that $f\in\mathcal{F}$].   
\item[$\circ$] \textit{ locally bounded} if for every $z_0\in \Omega$ there
 exists an open disk $D(z_0;r)$ and a finite constant $M$ such that
$|f(z)|\leq M$, for all $f \in \mathcal{F}$ and all $z
\in D(z_0;r)$.
\item[$\circ$] \textit{ equicontinuous} on $X\subset\Omega$ if for every
$\epsilon>0$ there exists $\delta>0$ such that
\[|f(z)-f(w)|<\epsilon,\ \forall f \in \mathcal{F}\ \textrm{and}\ \forall
z \ \textrm{and}\ \forall w, \ \textrm{both in}\  X, \ \textrm{such
that}\ |z-w|<\delta .\]  
\end{itemize}
\end{definition}
It is easy to verify that if $\mathcal{F}$, where $\mathcal{F}\subset C(\Omega)$, is locally bounded and $K$ is compact in $\Omega$, then there exists $M\in \mathbb R$ such that
$|f(z)|\leq M$, for all $f\in \mathcal{F}$ and for all $z \in K$. We then
say that $\mathcal{F}$ is \textit{uniformly bounded on the compact subsets of $\Omega$}.

If $\mathcal{F}$ is locally equicontinuous then $\mathcal{F}$ is
equicontinuous on the compacta in $\Omega$. 

\begin{lemma}\label{LEM122} Let us consider the countable collection of open disks
\[\mathcal{C}=\big\{D(a_n;r_m): a_n\in \mathbb Q\times \mathbb 
Q\ \textrm{and}\ r_m\in \mathbb Q, \ \textrm{where}\ r_m>0, \    n \in \mathbb N,\ \textrm{and}\ m\in \mathbb N\big\}.\] 
Then, every open set in $\mathbb R^2$ is a union of sets in $\mathcal{C}$.
\end{lemma}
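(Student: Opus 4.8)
The plan is to show the nontrivial inclusion directly: every open set $U\subseteq\mathbb R^2$ is contained in the union of those members of $\mathcal C$ that lie inside $U$ (the reverse inclusion being automatic). So I would fix an arbitrary open $U$ and an arbitrary point $p\in U$, and exhibit a single disk $D(a_n;r_m)\in\mathcal C$ with $p\in D(a_n;r_m)\subset U$. Summing over all $p\in U$ then gives $U$ as a union of sets in $\mathcal C$.

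To produce that disk, first use openness of $U$ to choose $\varepsilon>0$ with $D(p;\varepsilon)\subset U$. Next, invoke density: since $\mathbb Q\times\mathbb Q$ is dense in $\mathbb R^2$, pick $a_n\in\mathbb Q\times\mathbb Q$ with $|p-a_n|<\varepsilon/3$; and since $\mathbb Q$ is dense in $\mathbb R$, pick a positive rational $r_m$ with $\varepsilon/3<r_m<\varepsilon/2$. The key point of this choice is that the radius is taken strictly larger than $|p-a_n|$, which forces $p\in D(a_n;r_m)$, while keeping $r_m$ small enough that the disk does not escape $D(p;\varepsilon)$.

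The verification is then a one-line triangle-inequality argument: for any $w\in D(a_n;r_m)$ we have $|w-p|\le|w-a_n|+|a_n-p|<r_m+\varepsilon/3<\varepsilon/2+\varepsilon/3<\varepsilon$, so $w\in D(p;\varepsilon)\subset U$; hence $D(a_n;r_m)\subset U$, and of course $D(a_n;r_m)\in\mathcal C$ because $a_n$ and $r_m$ were chosen rational with $r_m>0$.

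There is no real obstacle here; the statement is an elementary countable-basis fact for the plane. The only thing requiring a moment's care is arranging the two density choices so that the rational radius simultaneously exceeds the distance from $p$ to the rational center (so $p$ lies in the disk) and is small enough for the disk to stay inside $U$; fixing the buffer $\varepsilon/3$ versus $\varepsilon/2$ as above handles both constraints at once.
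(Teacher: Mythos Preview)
Your proof is correct and follows essentially the same approach as the paper's: fix a point of the open set, take a small disk around it, use density of $\mathbb Q\times\mathbb Q$ to find a nearby rational center, and use density of $\mathbb Q$ to pick a rational radius that both captures the point and keeps the disk inside the open set. The only cosmetic difference is in the bookkeeping of constants: the paper first fixes a positive rational $r$ with $D(z;2r)\subset\Omega$, then picks $w\in D(z;r)\cap(\mathbb Q\times\mathbb Q)$ and uses $D(w;r)$, whereas you choose the center first and then squeeze a rational radius into $(\varepsilon/3,\varepsilon/2)$.
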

\begin{proof} Let $\Omega$ be an arbitrary open set in $\mathbb R^2$ and
$D(z;2r)$ an open disk contained in $\Omega$, with $r$ a strictly positive
rational number. It is clear that there exists a point $w\in D(z;r)\cap(\mathbb Q\times\mathbb Q)$. Moreover, it is easy to see that $z\in D(w;r)\subset D(z;2r)\subset\Omega$. We complete the proof by noticing that $D(w;r)\in \mathcal{C}$. 
\end{proof}

\begin{theorem}\label{TEO123} Let $\mathcal{F}$ be a locally bounded family in
$\mathcal{A}(\Omega)$. Then, \begin{itemize} 
\item[(a)] $\mathcal{F}$ is equicontinuous on each compact subset of $\Omega$.
\item[(b)]{\sf (Montel's Theorem)} $\mathcal{F}$ is normal. 
\end{itemize}
\end{theorem}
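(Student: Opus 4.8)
The plan is to prove both parts simultaneously, deriving equicontinuity first and then obtaining normality from it via an Arzel\`{a}--Ascoli-type diagonal argument. For part (a), fix a compact set $K\subset\Omega$ and choose $\delta_0>0$ small enough that the closed $\delta_0$-neighborhood $K'$ of $K$ is still contained in $\Omega$; since $\mathcal{F}$ is locally bounded, the remark following Definition~\ref{DEF121} gives a constant $M$ with $|f(z)|\leq M$ for all $f\in\mathcal{F}$ and all $z\in K'$. Now for $z_0\in K$ and $|z-z_0|<\delta_0$, I would write $f(z)-f(z_0)=\int$-free by using the Taylor expansion of $f$ at $z_0$: from Corollary~\ref{COR37} applied on a disk $\overline{D}(z_0;\delta_0/2)\subset K'$ we get $\max_{\overline{D}(z_0;\delta_0/4)}|f'|\leq 2M/\delta_0$, hence for $z,w$ in that smaller disk, $|f(z)-f(w)|\leq (2M/\delta_0)|z-w|$ by integrating the derivative along the segment (or more precisely by estimating the power-series difference termwise). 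A standard covering of $K$ by finitely many such small disks then yields a single $\delta>0$ working for all of $K$ and all $f\in\mathcal{F}$ simultaneously.

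For part (b), Montel's theorem, I would take an arbitrary sequence $(f_n)\subset\mathcal{F}$ and extract a compactly convergent subsequence. The first step is to fix a countable dense set $\{w_j\}\subset\Omega$ (for instance $(\mathbb{Q}\times\mathbb{Q})\cap\Omega$, using Lemma~\ref{LEM122} to know it is dense). Since $\mathcal{F}$ is locally bounded, the sequence $(f_n(w_1))$ is bounded in $\mathbb{C}$, so by Bolzano--Weierstrass it has a convergent subsequence; applying this successively at $w_2,w_3,\dots$ and taking the diagonal subsequence $(g_k)$, we obtain a subsequence of $(f_n)$ that converges at every $w_j$. The next step is to upgrade pointwise convergence on the dense set to uniform convergence on each compact $K\subset\Omega$: given $\epsilon>0$, equicontinuity on a slightly larger compact set $K'$ furnishes $\delta>0$ as in part (a); cover $K$ by finitely many disks $D(w_{j_1};\delta),\dots,D(w_{j_p};\delta)$ with centers among the dense points; for $k,\ell$ large enough that $|g_k(w_{j_i})-g_\ell(w_{j_i})|<\epsilon$ for all $i\leq p$, the triangle inequality $|g_k(z)-g_\ell(z)|\leq |g_k(z)-g_k(w_{j_i})|+|g_k(w_{j_i})-g_\ell(w_{j_i})|+|g_\ell(w_{j_i})-g_\ell(z)|<3\epsilon$ shows $(g_k)$ is uniformly Cauchy on $K$, hence converges uniformly there to a limit $f$, which is continuous on $\Omega$.

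The main obstacle, I expect, is organizational rather than conceptual: one must be careful that the diagonal subsequence is chosen once and for all and that the compact sets $K$ are handled with a slightly larger $K'$ still inside $\Omega$ so that the equicontinuity estimate from part (a) applies with uniform constants. A minor subtlety is that $\Omega$ need not be $\sigma$-compact in an obvious notation-free way, but one can exhaust $\Omega$ by the compact sets $K_n=\{z\in\Omega:|z|\leq n,\ d(z;\partial\Omega)\geq 1/n\}$ and run the diagonal argument across these, so that the final subsequence converges uniformly on every $K_n$ and therefore on every compact subset of $\Omega$. No integration is used anywhere: the only analytic input beyond Bolzano--Weierstrass is Corollary~\ref{COR37}, itself a consequence of the Gutzmer--Parseval inequality, which controls $f'$ in terms of a sup bound on $f$ and thereby delivers the Lipschitz estimate driving equicontinuity.
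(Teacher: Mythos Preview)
Your proposal is correct and follows essentially the same route as the paper: equicontinuity via the derivative bound of Corollary~\ref{COR37}, then an Arzel\`a--Ascoli diagonal argument. Two small remarks are worth making. First, your phrase ``by integrating the derivative along the segment'' is exactly what this paper is set up to avoid; the paper instead makes your parenthetical ``power-series difference termwise'' estimate explicit, writing $|f(z_0+h)-f(z_0)|\le |h|\sum_{n\ge1} n|a_n||h|^{n-1}$ and bounding $n|a_n|(2r)^{n-1}$ via Corollaries~\ref{COR36} and~\ref{COR37} so that the tail sums as a geometric series---you should spell this out rather than defaulting to the line integral. Second, your organization of part~(b) is actually a slight streamlining of the paper's: you take a countable dense set in all of $\Omega$ and run a \emph{single} diagonal to get one subsequence that is then shown to be uniformly Cauchy on every compact $K$ (using equicontinuity on a slightly larger $K'$), whereas the paper first fixes $K$, diagonalizes over a dense set in $K$, and then performs a \emph{second} diagonalization across an exhaustion $K_1\subset K_2\subset\cdots$ of $\Omega$. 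Your closing paragraph about the exhaustion $K_n$ is therefore redundant in your own setup---once you have the single diagonal subsequence converging at every $w_j\in\Omega$, the $3\epsilon$ argument already gives uniform convergence on every compact $K$ directly.
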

\begin{proof} Let us fix $K$, where $K$ is an arbitrary compact subset of $\Omega$. 
\begin{itemize}
\item[(a)] Let us pick
$r=d(K;\partial\Omega)/4>0$. Since $\mathcal{F}$ is locally bounded, $\mathcal{F}$ is uniformly bounded on the compact set $K(3r)=\{z : d(z;K)\leq3r\}\subset \Omega$. That is, there exists $M\in \mathbb R$ such that $|f(z)|\leq M$, for all $f\in \mathcal{F}$ and for all $z \in K(3r)$. 

 Given an arbitrary $z_0 \in K$, from Theorem ~\ref{TEO91} (by Hurwitz, Read, and Connell-Porcelli) we deduce that the Taylor series of an arbitrary
function $f\in \mathcal{F}$ around $z_0$, written as $f(z)=\sum_{n=0}^{+\infty}
a_n(z-z_0)^n$, converges in $D(z_0;4r)$. Therefore, given an arbitrary $h \in D(0;r)$ we have $f(z_0 +h) = \sum a_nh^n$ and
\begin{equation}\label{12.1}
|f(z_0+h)-f(z_0)| = \Big|\sum_{n\geq 1}a_nh^n\Big| \leq
|h|\,\sum|a_n|\,|h|^{n-1} \leq |h|\,\sum n|a_n||h|^{n-1} .
\end{equation}
Since $f'(z_0+h)=\sum na_nh^{n-1}$, by Corollaries ~\ref{COR36} and ~\ref{COR37} we have  
\begin{equation}\label{12.2}
n|a_n|\,(2r)^{n-1}\leq \max\limits_{\overline{D}(z_0;2r)}|f'|\leq \frac{M}{3r-2r}
=\frac{M}{r}, \  \textrm{for all}\ n \in \mathbb N .
\end{equation}
Using inequalities (\ref{12.1}) and (\ref{12.2}), after observing that $|h|/ 2r \leq  1/ 2$, we conclude the proof of (a) thus: 
\[ |f(z_0+h)-f(z_0)| \leq  |h|\sum
n|a_n|(2r)^{n-1}\left(\frac{|h|}{2r}\right)^{n-1}\leq |h|\frac{M}{r}\sum
\left(\frac{|h|}{2r}\right)^{n-1}\leq |h|\frac{2M}{r} ,\] 
valid for all $f\in \mathcal{F}$, all $z_0\in K$, and all $|h|<r$.
 
\item[(b)] Let us fix an arbitrary sequence $(f_n)_{n\in \mathbb N}$ in
$\mathcal{F}$. 

{\sf Claim 1.} There exists a subsequence of $(f_n)$ uniformly convergent on $K$.  

To prove this claim, let $X=\{x_k:k \in \mathbb N\}$ be dense in  $K$. Putting $ N_0=\mathbb N$, let us construct inductively a sequence of infinite sets of indexes $N_k\subset N_{k-1}$, where $k\in \{1,2,3,\ldots\}$. For fixed $k \geq 1$, the sequence $\big(f_n(x_k)\big)_{n\in N_{k-1}}$ is by hypothesis
bounded and therefore possesses a convergent subsequence, indexed by an
infinite set $N_k\subset N_{k-1}$. Then, if $n_p$ is the $p$th index in
$N_p$, the sequence $(f_{n_p}(x_k))_{p\in \mathbb N}$ converges, for each
$k\in \mathbb N$.  

Given $\epsilon>0$, we consider any $\delta>0$ following from the
equicontinuity of $\mathcal{F}$ in $K$. Then, for some $k \geq 1$ we have
$K\subset D(z_1;\delta)\cup ...\cup D(z_k;\delta)$. Let $N\in \mathbb N$ be
such that     
\[|f_{n_p}(z_j)-f_{n_q}(z_j)|<\epsilon\,,\ \ \textrm{if}\ j=1,...,k\
\textrm{and}\ p,q \geq N.\] 
Then, for fixed $w\in K$, choose $j$ so that $w\in D(z_j;\delta)$. Hence,
 for $p,q\geq N$ we have, by definition of $\delta$,
\[|f_{n_p}(w) - f_{n_q}(w)| \leq |f_{n_p}(w) -f_{n_p}(z_j)| + |f_{n_p}(z_j)
-f_{n_q}(z_j)|  + |f_{n_q}(z_j) -f_{n_q}(w)|<3\epsilon .\]
So, the subsequence $(f_{n_p})_{p\in\mathbb N}$ converges uniformly on
$K$. Claim 1 is proved.

Now we will show that there exists a subsequence of $(f_n)$ converging 
uniformly on every compact subset of $\Omega$.

{\sf Claim 2.} There exists an increasing sequence $(K_n)_{n\geq 1}$ of
compacta in $\Omega$, with each $K_n$ contained in the  interior of $K_{n+1}$, satisfying the condition  
\[\Omega=K_1 \cup K_2 \cup K_3 \cup \ldots.\] 

In fact, considering the following set, for each $n\in \mathbb N\setminus \{0\}$, 
\[K_n=\left\{z \in \overline{D}(0;n)\cap \overline{\Omega}: \, d(z;\partial
\Omega)\geq\frac{1}{n}\right\},\] 
it is clear that $K_n$ is closed and bounded and thus compact. Also, if $z\in K_n$ then $z\in \overline{\Omega}$ but $z\notin \partial \Omega$, implying $z \in \Omega$. Moreover, if $z \in K_n$, then we have $|z|\leq n<n+1$, $z\in \overline{\Omega}$, and $d(z;\partial \Omega)\geq \frac{1}{n}>\frac{1}{n+1}$. As a consequence, we obtain
\[K_n\subset \ \Omega_{n+1}=\Omega\cap D(0;n+1)\cap\left\{z: d(z;\partial
\Omega)>\frac{1}{n+1}\right\}\ \subset K_{n+1}\,,\] 
with $\Omega_{n+1}$ clearly open and $K_n$ in the interior of $K_{n+1}$. Claim 2 is proved.

To finish the proof of (b) we first notice that by applying Claim 1, we can choose
 an infinite set of indexes $I_1\subset \mathbb N$ such that the subsequence 
$(f_n)_{n\,\in\,  I_1}$, of the original sequence $(f_n)_{n \in \mathbb N}$, converges
on $K_1$. Moreover, for each $p\geq 2$, $p\in \mathbb N$, we can construct 
inductively an infinite set of indexes $I_p\subset I_{p-1}$ such that the
subsequence $(f_n)_{n \in I_p}$ converges on $K_p$.    

Next, applying the well-known ``Cantor's diagonal method'', we choose an
infinite set of indexes $I=\{i_1<i_2<...\}\subset \mathbb N$ such that $i_p\in I_p$, for all $p\geq 1$. Finally, from Claim 2 we conclude that the subsequence
$(f_{i_p})_{p\in \mathbb N}$ converges uniformly on every compact subset of $\Omega $.
\end{itemize}
\end{proof}

\section{Laurent Series}

Let us fix $r_1$ and $r_2$ such that $0\leq r_1<r_2\leq +\infty$. Let us
suppose that the power series $\sum_{n=0}^{+\infty}a_nz^n= a_0 + a_1z
+a_2z^2+ \cdots $ converges in the disk $\{z: |z|<r_2\}$ and also that the power series in the variable $\frac{1}{z}$, $\sum_{m=1}^{+\infty} a_{-m}z^{-m}= \frac{a_{-1}}{z} +  \frac{a_{-2}}{z^2} + \frac{a_{-3}}{z^3}+ \cdots $ converges inside $\{z: |z|>r_1\}$, the complement of the closed disk $\overline{D}(0;r_1)$. Then, we define the \textit{Laurent series} centered at zero $f(z)=\sum_{n=-\infty}^{+\infty}a_nz^n$ as 
\[f(z)=\sum\limits_{m=1}^{+\infty} a_{-m}z^{-m} +
\sum\limits_{n=0}^{+\infty}a_nz^n,\ \ \ \textrm{if}\ r_1<|z|<r_2\ .\]  
 
We say that the Laurent series $\sum_{n=-\infty}^{+\infty}a_nz^n$ converges on a subset $X$ of the annulus centered at zero $\{z: r_1<|z|<r_2\}$ if the series $\sum_{m=1}^{+\infty} a_{-m}z^{-m}$ and
$\sum_{n=0}^{+\infty}a_nz^n$ are both convergent for all $z\in X$. Hence, it
is easy to see that the Laurent series just defined converges for all $z$ satisfying $r_1<|z|<r_2$.

\begin{theorem}\label{TEO131} Let us suppose that $f(z)=\sum_{j=-\infty}^{+\infty}a_jz^j$, where
 $r_1<|z|< r_2$. If $r$ is such that $r_1<r<r_2$, then we have
\[\sum_{j=-\infty}^{+\infty}|a_j|^2\,r^{2j} \leq  M(r)^2,\  \textrm{where}\ M(r) =\max_{|z|=r}|f(z)|.\]
\end{theorem}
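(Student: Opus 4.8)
The plan is to reduce the Laurent-series estimate to the Gutzmer--Parseval inequality for analytic functions (Theorem~\ref{TEO33}) by splitting $f$ into its ``regular'' part $g(z)=\sum_{n=0}^{+\infty}a_nz^n$ and its ``principal'' part $h(z)=\sum_{m=1}^{+\infty}a_{-m}z^{-m}$, and to control the principal part by substituting $w=1/z$. Fix $r$ with $r_1<r<r_2$. On the circle $|z|=r$ write $f(re^{i\theta})=\sum_{j=-\infty}^{+\infty}a_jr^je^{ij\theta}$; the idea is to extract the coefficients of this (doubly infinite, absolutely convergent) Fourier-type series by the same finite-averaging trick used in Lemma~\ref{LEM31}, but now applied to both positive and negative powers simultaneously.

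First I would make the averaging argument precise. For a fixed $N\ge 1$, consider the truncated Laurent polynomial $P(z)=\sum_{j=-N}^{N}a_jz^j$. Writing $P_k(z)=P(\omega^kz)$ with $\omega=e^{i\pi/N}$ and squaring $|P_k(re^{i\theta})|$, the cross terms $\overline{a_\mu}\,\overline{z}^{\,\mu}a_\nu z^\nu\overline{\omega}^{k\mu}\omega^{k\nu}$ for $-N\le\mu<\nu\le N$ carry the factor $\omega^{k(\nu-\mu)}$ with $1\le\nu-\mu\le 2N$; here one must be slightly careful, since for the sum $\sum_{k=0}^{2N-1}\omega^{k(\nu-\mu)}$ to vanish one needs $\nu-\mu\not\equiv 0\pmod{2N}$, i.e.\ $\nu-\mu\ne 2N$. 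That single exceptional pair $(\mu,\nu)=(-N,N)$ is the only obstruction, and it is handled by instead using $\omega=e^{i\pi/N'}$ with $N'>N$ (say $N'=N+1$, so differences run up to $2N<2N'$), or equivalently by noting $P$ has ``degree'' at most $N$ and ``order'' at least $-N$ so enlarging $N$ costs nothing. With that fix one gets $\frac{1}{2N'}\sum_{k=0}^{2N'-1}|P_k(z)|^2=\sum_{j=-N}^{N}|a_j|^2|z|^{2j}$ exactly as in \eqref{EQ3.1.2}, hence $\sum_{j=-N}^{N}|a_j|^2r^{2j}\le \big(\max_{|z|=r}|P(z)|\big)^2$.

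Next I would pass from the truncation $P$ to $f$. Exactly as in the proof of Theorem~\ref{TEO33}, the triangle inequality gives, for $|z|=r$,
\[
\Big|\sum_{j=-N}^{N}a_jz^j\Big|\le M(r)+\sum_{j>N}|a_j|r^j+\sum_{m>N}|a_{-m}|r^{-m},
\]
and both tail sums tend to $0$ as $N\to+\infty$ because $\sum a_nz^n$ converges absolutely for $|z|=r<r_2$ and $\sum a_{-m}z^{-m}$ converges absolutely for $|z|=r>r_1$. Therefore $\max_{|z|=r}|P(z)|\le M(r)+\varepsilon_N$ with $\varepsilon_N\to 0$, so $\sum_{j=-N}^{N}|a_j|^2r^{2j}\le (M(r)+\varepsilon_N)^2$ for every $N$, and letting $N\to+\infty$ yields $\sum_{j=-\infty}^{+\infty}|a_j|^2r^{2j}\le M(r)^2$, which is the claim.

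The main obstacle is the boundary case $\nu-\mu=2N$ in the geometric-sum cancellation: a naive copy of Lemma~\ref{LEM31} with $\omega=e^{i\pi/N}$ fails on the single pair $(-N,N)$. The clean remedy is to work with $\omega=e^{i\pi/(N+1)}$ (or any $e^{i\pi/N'}$ with $N'>N$), so that all the relevant exponent differences $\nu-\mu\in\{1,\dots,2N\}$ satisfy $0<\nu-\mu<2N'$ and the sum $\sum_{k=0}^{2N'-1}\omega^{k(\nu-\mu)}=\frac{1-\omega^{2N'(\nu-\mu)}}{1-\omega^{\nu-\mu}}$ genuinely vanishes; everything else is a routine rerun of the arguments already used for Lemma~\ref{LEM31} and Theorem~\ref{TEO33}. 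One could also phrase the whole thing without polynomial continuity by replacing $M(r)$ with $\sup_{|z|=r}|f(z)|$, as in Remark~\ref{REM34}.
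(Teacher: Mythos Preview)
Your proof is correct and follows the same overall architecture as the paper's: truncate to a Laurent polynomial $P(z)=\sum_{j=-N}^{N}a_jz^j$, establish $\sum_{j=-N}^{N}|a_j|^2r^{2j}\le\bigl(\max_{|z|=r}|P|\bigr)^2$, bound $\max_{|z|=r}|P|$ by $M(r)$ plus the two absolutely convergent tails, and let $N\to\infty$.

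The only real difference is in how you obtain the Gutzmer--Parseval inequality for the truncated Laurent polynomial. You reopen the averaging argument of Lemma~\ref{LEM31}, spot the genuine obstruction at $\nu-\mu=2N$, and patch it by enlarging the root of unity to $\omega=e^{i\pi/(N+1)}$. The paper instead sidesteps this issue entirely with a one-line trick: on $|z|=r$ one has $\bigl|\sum_{j=-N}^{N}a_jz^j\bigr|=\bigl|\sum_{j=-N}^{N}a_jr^{-N}z^{j+N}\bigr|$ (multiply inside the modulus by $z^N/r^N$, which is unimodular there), so the Laurent polynomial becomes an \emph{ordinary} polynomial of degree $2N$ in $z$ with coefficients $a_jr^{-N}$, and Lemma~\ref{LEM31} applies as a black box, giving $\sum_{j=-N}^{N}|a_jr^{-N}|^2r^{2(j+N)}=\sum_{j=-N}^{N}|a_j|^2r^{2j}$ on the left. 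This buys a cleaner reduction with no need to revisit the geometric-sum cancellation; your route, on the other hand, shows directly that the averaging technique extends to Laurent polynomials, which is a useful observation in its own right. (As a minor expository point: your opening sentence about splitting into regular and principal parts and substituting $w=1/z$ describes a plan you do not actually carry out; you may want to drop it.)
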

\begin{proof} Let us pick an arbitrary $z \in \mathbb C$ such that $|z|=r$. Given an arbitrary $N\in \mathbb N$, according to the triangle inequality we have 
\[\Big|\sum_{j=-N}^{j=N}a_jz^j\Big| \leq M(r)+
\Big|\sum_{j=N+1}^{+\infty}a_{-j}z^{-j}  \,+\,
\sum_{j=N+1}^{+\infty}a_jz^j\Big|.\] 
Since $z^j=r^{-N}z^{j+N}$, from the inequality right above it may be concluded that 
\[\Big|\sum_{j=-N}^{j=N}a_jr^{-N}z^{j+N}\Big|   \leq  M(r) +
\sum_{j=N+1}^{+\infty}|a_{-j}|r^{-j}  + \sum_{j=N+1}^{+\infty}|a_j|r^{j}
.\] 
Hence, by the Gutzmer-Parseval inequality for polynomials (Lemma ~\ref{LEM31}) we obtain
\[\sum_{j=-N}^{j=N}|a_j|^2r^{2j} \leq  \left(M(r) +
\sum_{j=N+1}^{+\infty}|a_{-j}|r^{-j} +
\sum_{j=N+1}^{+\infty}|a_j|r^j\right)^2.\]  
Taking the limit of the last inequality for $N\to +\infty$ yields the claimed inequality.
\end{proof}

Keeping the hypothesis in Theorem \ref{TEO131} we have the following result.

\begin{corollary}\label{132} If $f(z)=\sum_{j=-\infty}^{+\infty}a_jz^j= 0$, where $r_1<|z|< r_2$, then we have $a_j=0$, for all $j\in \mathbb Z$.
\end{corollary}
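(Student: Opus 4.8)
The plan is to invoke Theorem~\ref{TEO131} directly. If $f(z)=\sum_{j=-\infty}^{+\infty}a_jz^j=0$ on the annulus $r_1<|z|<r_2$, then fixing any $r$ with $r_1<r<r_2$ we have $M(r)=\max_{|z|=r}|f(z)|=0$. Feeding this into the inequality of Theorem~\ref{TEO131} gives
\[\sum_{j=-\infty}^{+\infty}|a_j|^2\,r^{2j}\leq M(r)^2=0,\]
and since this is a sum of non-negative terms it forces $|a_j|^2 r^{2j}=0$, hence $a_j=0$, for every $j\in\mathbb Z$. (A single value of $r$ in $(r_1,r_2)$ suffices, so no limiting argument over $r$ is needed.)

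I expect there to be essentially no obstacle here: the corollary is an immediate consequence of the Gutzmer--Parseval inequality for Laurent series, exactly as Remark~\ref{REM35} derives the uniqueness theorem for ordinary power series from Theorem~\ref{TEO33}. The only thing worth a remark is that we should make sure the hypothesis ``$f\equiv 0$ on the annulus'' legitimately gives $M(r)=0$ for the relevant circle $|z|=r$, which it plainly does since that circle lies inside the annulus.

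If one wished to phrase it slightly more carefully to mirror the paper's style, one could note: applying Theorem~\ref{TEO131} to the identically-zero function on the annulus, for each admissible $r$ one obtains $\sum_{j\in\mathbb Z}|a_j|^2 r^{2j}\le 0$; as each summand is non-negative, every summand vanishes, and in particular $|a_j|r^{j}=0$, so $a_j=0$ for all $j\in\mathbb Z$. This completes the proof.
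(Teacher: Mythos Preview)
Your proof is correct and is exactly the approach the paper takes: the corollary is stated to follow ``straightforward from Theorem~\ref{TEO131}'', and your argument spells out precisely that straightforward deduction.
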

\begin{proof} It follows straightforward from Theorem \ref{TEO131}.
\end{proof}

\

\paragraph{\bf Acknowledgments.} The author wishes to express his gratitude to Professor R. B. Burckel for his active interest in the publication of this paper, many references and stimulating emails, and very helpful suggestions. I am also very  thankful to Professors
J. V. Ralston and Paulo A. Martin for their comments and suggestions. The possible slips and mistakes are my sole responsibility.

\bibliographystyle{amsplain}

\end{document}